\numberwithin{equation}{section}
\theoremstyle{plain}
\newtheorem{theorem}{Theorem}[section]
\newtheorem{lemma}[theorem]{Lemma}
\newtheorem{corollary}[theorem]{Corollary}
\newtheorem{proposition}[theorem]{Proposition}
 \theoremstyle{definition}
 \newtheorem{condition}[theorem]{Condition}
\newtheorem{remark}[theorem]{Remark}
\newtheorem{?}[theorem]{Problem}
\newcommand{\R}{\mathbb{R}}
\newcommand{\coup}{{g}}
\newcommand{\bx}{{x}}
\title[Quantum field theory and inverse problems]{Quantum field theory and inverse problems: Imaging with Entangled Photons}
\author{Matti Lassas}
\address{Department of Mathematics and Statistics, University of Helsinki, Helsinki, Finland}
\email{matti.lassas@helsinki.fi}
\author{Medet Nursultanov}
\address{Department of Mathematics and Statistics, University of Helsinki, Helsinki, Finland}
\email{medet.nursultanov@gmail.com}
\author{Lauri Oksanen}
\address{Department of Mathematics and Statistics, University of Helsinki, Helsinki, Finland}
\email{lauri.oksanen@helsinki.fi}
\author{John C. Schotland }
\address{Department of Mathematics and Department of Physics, Yale University, New Haven, CT, USA}
\email{john.schotland@yale.edu}
\date{}
\begin{document}

 \subjclass[2010]{Primary: 	35R30  Secondary: 35Q40 }

 \keywords{Inverse problems,  two-photon scattering, entanglement, atom density recovery, source-to-solution map}

\begin{abstract}
We consider the quantum field theory for a scalar model of the electromagnetic field interacting
with a system of two-level atoms. In this setting, we show that it is possible to uniquely determine the density of atoms from measurements of the source to solution map for a system of nonlocal partial differential equations, which describe the scattering of a two-photon state from the atoms. The required measurements involve correlating the outputs of a point detector with an integrating detector, thereby exploiting information about the entanglement of the photons.
\end{abstract}

\maketitle

\section{Introduction}

Scattering experiments are among the most powerful tools to probe the structure of matter. Such experiments play a fundamental role in nearly all branches of physics. Mathematically, their analysis is framed as an inverse problem. In quantum mechanics, for instance, this consists of determining the potential in the Schr\"odinger equation from suitable measurements, a problem that is relatively well understood. 
However, there is a large class of physical phenomena that are not described in this language. These appear in physical settings where the number of particles is not conserved. For example, in quantum field theory, field excitations (particles) can be created and destroyed. Similar considerations appear in condensed matter physics, where quasiparticles are likewise created and destroyed. 

In this paper we consider an inverse problem in quantum field theory that arises in the quantum electrodynamics of a system of two-level atoms.
The inverse problem consists of recovering the density of atoms from measurements of the source to solution map for a system of nonlocal partial differential equations. Physically, this describes the scattering of an \emph{entangled} two-photon state from the atoms.
We show that it is possible to uniquely determine the density from such measurements. We emphasize that entanglement of the photons is a necessary condition for this result to hold.
To the best of our knowledge, this work reports the first study of an inverse scattering problem in quantum field theory. 
 
\subsection{Model}
Let $n\geq 2$ and $\rho \in C_0^\infty(\mathbb{R}^n)$ be a non-negative function, and define the compact set $\Sigma = \operatorname{supp}(\rho) \subset \mathbb{R}^n$. We consider the following system of nonlocal partial differential equations:
\begin{align}
\label{eq:dynamics1}
\nonumber i\partial_t\psi_2(t,x_1,x_2) 
&= (-\Delta_{x_1})^{1/2}\psi_2(t,x_1,x_2)+(-\Delta_{x_2})^{1/2}\psi_2(t,x_1,x_2)\\
\nonumber &\quad +\frac{g}{2}\left(\rho(x_1)\psi_1(t,x_1,x_2)+\rho(x_2)\psi_1(t,x_2,x_1)\right), \\
i\partial_t \psi_1(t,x_1,x_2)
&= \left[(-\Delta_{x_2})^{1/2}+\Omega\right]\psi_1(t,x_1,x_2) + 2g\psi_2(t,x_1,x_2) \\
\nonumber &\quad - 2g\rho(x_2)a(t,x_1,x_2), \\
i\partial_t a(t,x_1,x_2) 
\nonumber&= 2\Omega\, a(t,x_1,x_2)+ \frac{g}{2}\left(\psi_1(t,x_2,x_1)-\psi_1(t,x_1,x_2)\right),
\end{align}
together with the initial conditions
\begin{equation}\label{initial_cond}
    \psi_1 \arrowvert_{t=0}=0, \qquad
    \psi_2 \arrowvert_{t=0}=f_0, \qquad
    a \arrowvert_{t=0}=0,
\end{equation}
where $f_0 \in C_0^\infty(\mathbb{R}^{n}\times \mathbb{R}^{n})$ is symmetric, that is, $f_0(x_1,x_2) = f_0(x_2,x_1)$. Eq.~\eqref{eq:dynamics1} has its origins in the quantum electrodynamics of two-level atoms, as introduced in~\cite{KS1,KS2} and further developed in~\cite{HKSW1,HKSW2,HKRS,KSS}. Here $\psi_2(t,x_1,x_2)$ denotes the probability amplitude for creating two photons at the points $x_1$ and $x_2$ at time $t$, $\psi_1(t,x_1,x_2)$ is the amplitude for exciting an atom at $x_1$ and creating a photon at $x_2$, $a(t,x_1,x_2)$ is the amplitude for exciting two atoms at $x_1$ and $x_2$, and $\rho$ is the number density of the atoms. The constants $g$ and $\Omega$ are nonnegative and correspond to the atom-photon coupling and the atomic transition frequency, respectively. 

In this setting, the model describes the scattering of a two-photon state from the atoms. If $\psi_2$ is not factorizable as a function of $x_1$ and $x_2$, this corresponds to an entangled two-photon state. We emphasize that while the derivation of \eqref{eq:dynamics1} is carried out within the framework of quantum field theory, the results presented in this work are mathematically rigorously justified.

\subsection{Inverse Problem}
To reformulate the system \eqref{eq:dynamics1} in a compact form, we introduce the following notation:
\begin{equation*}
    \tilde{\psi}_1(t,x_1,x_2) = \psi_1(t,x_2,x_1), \qquad
    \rho_1(x_1,x_2) = \rho(x_1), \qquad
    \rho_2(x_1,x_2) = \rho(x_2).
\end{equation*}
In addition, we define the matrix of operators $\mathcal{A}$ by
\begin{equation*}
	\begin{bmatrix}
		i\partial_t - (-\Delta_{x_1})^{\frac{1}{2}} - (-\Delta_{x_2})^{\frac{1}{2}} & -g \sqrt{\rho_2} & -g \sqrt{\rho_1} & 0 \\
		- g \sqrt{\rho_2} & i\partial_t - (-\Delta_{x_1})^{\frac{1}{2}} - \Omega & 0 & -g \sqrt{\rho_1} \\
		- g \sqrt{\rho_1} & 0 & i\partial_t - (-\Delta_{x_2})^{\frac{1}{2}} - \Omega & g \sqrt{\rho_2} \\
		0 & -g \sqrt{\rho_1} & g \sqrt{\rho_2} & i\partial_t - 2\Omega
	\end{bmatrix}
\end{equation*}
and vector-valued functions
\begin{equation}\label{G_psi}
	u = \sqrt{2}\left[ \psi_2, \tfrac{1}{2} \sqrt{\rho_2} \tilde{\psi}_1, \tfrac{1}{2} \sqrt{\rho_1} \psi_1, \sqrt{\rho_1\rho_2}a\right]^T,
	\qquad
	f = [f_0, 0, 0, 0]^T .
\end{equation}
Then, we obtain
\begin{equation}\label{main_eq}
    \begin{cases}
        \mathcal{A} u = 0,\quad \hbox{for }t>0,\\
        u\arrowvert_{t=0} = f.
    \end{cases}
\end{equation}
We study the above problem~\eqref{main_eq} for initial data of the form described above, that is,  we assume that $f$ belongs to the set
\begin{equation*}
	\mathcal{C}_{\operatorname{sym}}(S)= \left\{ h \in C_0^\infty( S; \mathbb{C}^4) : h_0(x_1, x_2) = h_0(x_2, x_1), \;  h_1 = h_2 = h_3 = 0 \right\},
\end{equation*}
where $h = [h_0, h_1, h_2, h_3]^T$. In Section \ref{Sec:direct_problem}, we show that for such $f$, the problem above admits a unique smooth solution, which we denote by $u^f = [u_0^f, u_1^f, u_2^f, u_3^f]^T$.

To define the inverse problem, we first describe the measurement setting; see Figure~\ref{fig:measurement}. Let $W_1, W_2 \subset \mathbb{R}^n$ be two open, non-empty sets representing spatial regions where the two photons may be detected. We assume that measurements are spatially resolved with respect  to the first spatial variable, meaning that we have pointwise information in $x_1 \in W_1$. In contrast, measurements in the second variable are limited. We only have access to integrated (averaged) information for $x_2 \in W_2$.

\begin{figure}[t]
    \centering
    \begin{overpic}[width=1\textwidth]{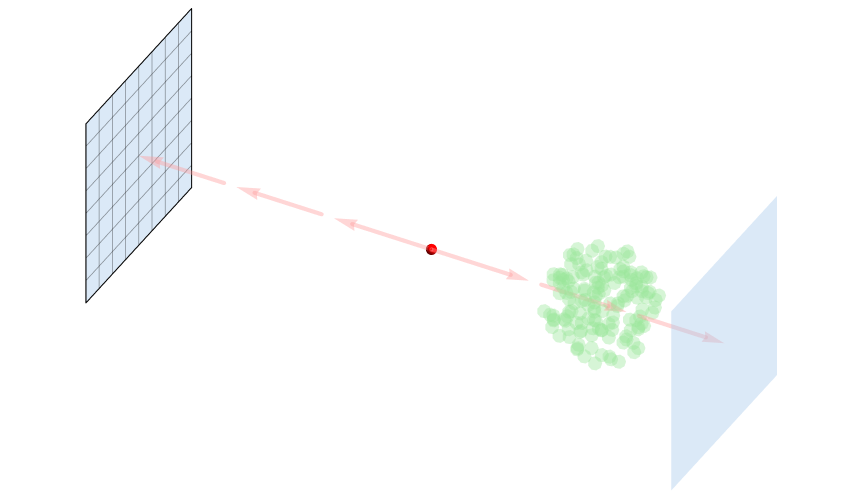}
        \put(0,18){Spatially resolved detector}
        \put(68,-3){Integrating detector }
        \put(46,26){Source}
        \put(65,31){Medium}
    \end{overpic}
    \bigskip
    \caption{Schematic illustration of the measurement apparatus. One photon is 
    detected by a spatially-resolved detector, while the second photon, after 
    interacting with the medium, is recorded by an integrating detector. The correlation of  the detector outputs defines the measurement $\Lambda f$.}
    \label{fig:measurement}
\end{figure}

To formalize the above, we fix a non-negative function $\chi \in C_0^\infty(\mathbb{R}^n)$ with $\operatorname{supp}(\chi) = \overline{W_2}$, and define the measurement operator
\begin{align}
\begin{split}
\label{measurement}
	\Lambda :   \mathcal{C}_{\operatorname{sym}}(S) &\to C^\infty ((0,\infty)\times W_1),\\
	\Lambda f(t,x_1) & = \int_{\mathbb R^n} \chi(x_2) \left|u_0^f(t, x_1, x_2)\right|^2 \, dx_2.
\end{split}
\end{align}
The inverse problem is to determine the unknown density $\rho$ from knowledge of $\Lambda$.

\subsection{Main Result}

To extract information about the unknown density $\rho$ from the data $\Lambda$, we require a specific geometric configuration of the source region $S$, the measurement domains $W_1$ and $W_2$, and the support of the density $\Sigma$. First, we need all relevant photon paths to  reach the medium. To ensure this, we assume that there exists a source point $z = (z_1, z_2) \in S$ such that every point of the support $\Sigma$ lies on a segment from $z_2$ to some point in $W_2$. This guarantees that the rays associated with the second photon's path probe the entire support of $\rho$.
Second, for each such direction, we require a ray of equal length from a point $x_1 \in W_1$ to $z_1$ that avoids $\Sigma$. These rays serve as reference paths that allow us to isolate the contribution of $\rho$. By properly adjusting the propagation times and exploiting the symmetry of the equations, we can access integrals of $\rho$ along the segment from $z_2$ to $x_2$. The informal geometric description above is made precise in Condition~\ref{condition_intro}, which contains the full set of assumptions used in the proof.

Under these geometric conditions, our main result is as follows.

\begin{theorem}
\label{main result}
	Assume that Condition \ref{condition_intro} holds. Then the measurement map $\Lambda$ determines the density $\rho$ uniquely.
\end{theorem}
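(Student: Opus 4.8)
\emph{Proof strategy.} The plan is to reduce the quadratic measurement to a sesquilinear one by polarization, expand the solution in a single-scattering (Born) series, isolate microlocally the effect of $\rho$ on the second photon (which amounts to an attenuation $e^{\,c\int_{[z_2,x_2]}\rho}$ along rays), read off the resulting line integrals of $\rho$, and invert this X-ray-type transform. For the first reduction, since $\mathcal A u=f$ is linear, $f\mapsto u^f$ is linear, so for $f,\tilde f\in\mathcal C_{\mathrm{sym}}(S)$
\[
  \Lambda(f+\tilde f)-\Lambda f-\Lambda\tilde f=2\operatorname{Re}\int_{W_2}\chi(x_2)\,u_0^{f}\,\overline{u_0^{\tilde f}}\,dx_2 ,
\]
so $\Lambda$ determines $\operatorname{Re}B(f,\tilde f)$, where $B(f,\tilde f)(t,x_1)=\int_{W_2}\chi\,u_0^{f}\,\overline{u_0^{\tilde f}}\,dx_2$, for all $f,\tilde f\in\mathcal C_{\mathrm{sym}}(S)$, $t>0$, $x_1\in W_1$; the real part suffices because the quantity ultimately extracted is a line integral of the real function $\rho$.

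Write $\mathcal A=\mathcal A_0+\mathcal B$, with $\mathcal A_0$ the diagonal part built from the free half-wave generators and $\mathcal B$ the off-diagonal couplings, each entry a multiplication by $\pm g\sqrt{\rho_j}$. On a finite time interval the causal inverse is the norm-convergent Volterra series $u^{f}=\sum_{k\ge0}(-1)^k\mathcal A_0^{-1}(\mathcal B\mathcal A_0^{-1})^kf$ (the $k$-th term carries $k$ Duhamel integrations). Since $\mathcal B$ carries the components $(u_0,u_3)$ into $(u_1,u_2)$ and back while $f$ occupies the $u_0$-slot, only even orders reach $u_0^f$: one finds $u_0^f=u_0^{f,(0)}+u_0^{f,(2)}+\cdots$ with $u_0^{f,(0)}=P_0 f_0$ ($\rho$-independent) and
\[
  u_0^{f,(2)}=g^2\,P_0\!\left(\rho_2\,P_1 P_0 f_0+\rho_1\,P_2 P_0 f_0\right),
\]
the unique term linear in $\rho$, where $P_0,P_1,P_2$ are the causal propagators of $i\partial_t-(-\Delta_{x_1})^{1/2}-(-\Delta_{x_2})^{1/2}$, $i\partial_t-(-\Delta_{x_1})^{1/2}-\Omega$ and $i\partial_t-(-\Delta_{x_2})^{1/2}-\Omega$ (the two $\sqrt{\rho_j}$ merge into one $\rho_j$ because $\rho_j$ is independent of the variable $P_j$ acts in), and the remaining terms have degree $\ge2$ in $\rho$. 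Each term is a Fourier integral operator generated by the half-wave propagators $e^{-it(-\Delta)^{1/2}}$, whose bicharacteristics are the unit-speed straight lines; $u_0^{f,(2)}$ describes the second photon being absorbed by an atom at a point $w$ with $\rho(w)\ne0$ and re-emitted, and, since re-emission is isotropic, resumming these corrections along a ray multiplies the free second-photon amplitude by $e^{\,c\int\rho}$ with $\operatorname{Re}c\ne0$ (forward-scattering attenuation) --- this is why the fixed value of $g$ is not an obstruction.

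Now fix $x_1\in X_1$ and $x_2\in W_2$ with $\chi(x_2)>0$; Condition~\ref{condition_intro} furnishes a source point $z=(z_1,z_2)\in S$ whose photon paths can be taken along $[z_1,x_1]$ and $[z_2,x_2]$ with $|z_1-x_1|=|z_2-x_2|=:T$ and $[z_1,x_1]\cap\Sigma=\varnothing$. Choose $f$ and $\tilde f$ to be oscillatory wave packets concentrated there, one photon directed along each of these segments, emitted at $t=0$; at time $t=T$ both photons of each source arrive at $(x_1,x_2)$ simultaneously, the first having propagated freely. Evaluating $\operatorname{Re}B(f,\tilde f)$ near $(T,x_1)$ and applying stationary phase in $x_2$, the data decomposes into contributions with geometrically determined phases; the contribution in which the second photon travels along $[z_2,x_2]$ --- the matched length forces the absorption point $w$ onto this segment, since $|z_2-w|+|w-x_2|=|z_2-x_2|$ forces $w\in[z_2,x_2]$, and the $\rho_1$-term of $u_0^{f,(2)}$ drops out because the first photon stays in a $\rho$-free region --- has modulus equal to the free reference modulus times $e^{2\operatorname{Re}(c)\int_{[z_2,x_2]}\rho}$, the symmetry of the system being used to combine the two single-scattering terms coherently. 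Deflected and multiple-scattering contributions carry other phases and are discarded. Dividing out the computable reference, $\Lambda$ determines $\int_{[z_2,x_2]}\rho\,ds$; letting $(x_1,x_2)$ and the synchronizing source vary over the configurations allowed by Condition~\ref{condition_intro}, and letting $t>T$ vary (which moves $w$ onto ellipsoidal arcs with foci $z_2,x_2$), one obtains $\int\rho$ over a richer family of curves.

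Finally, item (1) of Condition~\ref{condition_intro} guarantees that $\Sigma$ is covered by the segments $[z_2,x_2]$, $x_2\in W_2$, and the family of curves produced above is rich enough that the corresponding X-ray-type transform is injective on functions supported in the compact set $\Sigma$; since $\operatorname{supp}\rho=\Sigma$, this recovers $\rho$, proving the theorem. The main obstacle is the third step: the separation of the relevant scattering contribution must be carried out microlocally, within the Fourier-integral-operator calculus for products of half-wave propagators --- a naive first-arrival argument is unavailable because $(-\Delta)^{1/2}$ propagates the amplitude at infinite speed even though singularities travel at unit speed --- and one must compute the principal symbol of that contribution, identify it with the attenuation factor $e^{2\operatorname{Re}(c)\int_{[z_2,x_2]}\rho}$, and match the hypotheses of Condition~\ref{condition_intro} to an injectivity theorem for the resulting limited-data X-ray transform.
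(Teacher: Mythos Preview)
Your overall architecture --- polarize, expand in powers of the coupling, isolate the $\rho$-dependent piece microlocally along matched rays, then invert an X-ray transform --- is the same as the paper's. The Born computation is also essentially right: after eliminating $u_1,u_2,u_3$ one lands on an equation $Du_0 = Qu_0 + f$ with $Q = g^2(\rho_2 E_1 + \rho_1 E_2') + O(g^4)$, which at second order is exactly your $u_0^{f,(2)}$.

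The substantive error is the claim that resumming the Born series produces an attenuation factor $e^{\,c\int\rho}$ on the leading amplitude. This would be correct if $\rho$ entered through an order-zero operator, since then it would appear in the subprincipal symbol and hence in the transport equation for $\sigma[u_0]$. But here $Q$ has order $-1$ (each $E_j$ is a parametrix of a first-order operator), so the transport equation for the principal symbol of $u_0$ along the bicharacteristics is simply $\mathcal L_H\sigma[u_0]=i\sigma[f]$, with no $\rho$ at all. The effect of $\rho$ first shows up one order down: writing $u_0=u_0^{in}+u_0^{sc}$ with $Du_0^{in}=f$, the scattered piece satisfies $(D-Q)u_0^{sc}=Qu_0^{in}$, and its principal symbol solves $\mathcal L_H\sigma[u_0^{sc}]=i\sigma[Q]\,\sigma[u_0^{in}]$. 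Integrating this transport equation gives $\sigma[u_0^{sc}]$ \emph{linearly} in $\int\rho$ along the ray, not exponentially. So the quantity you should be trying to read off is the principal symbol of the subleading conormal piece $m(u_0^{sc},v_0)$, not a modulus correction to the leading term; that symbol contains $\int_0^T\rho(r\kappa_2)\,dr$ as a multiplicative factor, and the rest of the argument (choosing the $\kappa_1$-ray to miss $\Sigma$, varying $x_2$, inverting the X-ray transform) then goes through.

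This also explains why the separation step you flag as the main obstacle is tractable in the paper's approach but not in yours: $u_0^{in}$ and $u_0^{sc}$ live in different orders of the conormal filtration, so once you pair with a smooth $v_0$ and compute $m(u_0,v_0)=m(u_0^{in},v_0)+m(u_0^{sc},v_0)$, the two pieces are distinguished by their symbol order rather than by a stationary-phase phase-matching argument. Your proposed separation by ``geometrically determined phases'' and discarding ``deflected and multiple-scattering contributions'' is not needed and would in any case be delicate, since the higher Born terms share the same Lagrangian as the first. A secondary point: you only extract $\operatorname{Re}B(f,\tilde f)$, but the symbol computation produces complex quantities (there is a factor of $i$ in the transport solution and the known reference factors are complex), so you will want the full sesquilinear form; the paper obtains it by the standard four-term complex polarization.
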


For a related inverse problem in which the system is driven by a time-dependent source term rather than by initial data, the geometric assumptions can be relaxed. We analyze this variant in Appendix~\ref{appendix: source problem} and prove an analogous uniqueness result under weaker hypotheses.

We empahsize that in the proof of Theorem \ref{main result} we use
non-factorizable initial values $f(x_1,x_2)$, that is, functions $f(x_1,x_2)$ that can not be written
in the form $f_1(x_1)f_2(x_2)$.  Physically, this means that to solve the inverse 
problem of determining the density $\rho(x)$  we use initial
values $f$ that produce
two entangled particles, see Remark \ref{Remark 1 on entanglement} below.

\subsection{Physical Considerations and Applications}
The physical setting we consider is a quantum field theory that describes the interaction of light and matter. This theory is based on several assumptions \cite{KS1}. We employ a scalar model of the electromagnetic field, so that effects due to polarization are not considered. Matter is taken to consist of two-level atoms. We also make use of the dipole and rotating wave approximations, which are standard in quantum optics \cite{MW}. The former holds when the wavelength of light is large compared to the atomic size. The latter breaks down in the strong-coupling regime, where the optical frequency is comparable to or greater than the atom-field coupling. In addition, all fields are quantized in real space \cite{KS1,KS2}. This leads to a formulation of quantum electrodynamics in which the probability amplitudes that define a two-excitation quantum state obey the nonlocal partial differential equations \eqref{eq:dynamics1}. The corresponding physical consequences have been explored in a series of papers that address many-body problems in quantum optics, especially the derivation of kinetic equations for random media, the characterization of bound states and resonances, and band structure of periodic atomic systems \cite{KS1,KS2,HKSW1,HKSW2,HKRS,KSS}. 

This paper is the first to consider the inverse problem for  \eqref{eq:dynamics1}. A potential application is to optical imaging with quantum states of light. The key idea is to exploit the quantum nature of light to achieve capabilities, such as increased resolution and improved signal-to-noise performance, that surpass the limits imposed by classical physics \cite{Defienne,MTGP}. The experiment shown in Fig.~\ref{fig:geometry} is closely related to ghost imaging and imaging with undetected photons \cite{Defienne}. These methods utilize quantum correlations between pairs of entangled photons to form an image of an object. Here one photon from the pair interacts with the object and is recorded by an integrating detector, while the second photon does not interact with the object and is registered by a spatially-resolved detector. The correlations of the detector outputs, viewed as an image, is a direct visualization of the function $\Lambda f$ defined by \eqref{measurement}, and corresponds to the observable in a ghost imaging experiment. 

\section{Direct problem}\label{Sec:direct_problem}
In this section, we study the direct problem. We will show that equation \eqref{main_eq}, with a suitable initial condition, has a unique smooth solution.

We denote 
\begin{equation*}
    \begin{array}{ll}
        L = (-\Delta_{x_1})^{1/2} + (-\Delta_{x_2})^{1/2},\\
        L_1 = (-\Delta_{x_1})^{1/2},\\
        L_2 = (-\Delta_{x_2})^{1/2},\\
        L_3 = 0,
    \end{array}
\qquad
    \begin{array}{ll}
         D = i\partial_t - ((-\Delta_{x_1})^{1/2} + (-\Delta_{x_2})^{1/2}), \\
         D_1 = i\partial_t - (-\Delta_{x_1})^{1/2}, \\
         D_2 = i\partial_t - (-\Delta_{x_2})^{1/2},\\
         D_3 = i\partial_t.
    \end{array}
\end{equation*}

Let us set
\begin{equation*}
    \mathcal{L}= 
  \begin{bmatrix}
        L & 0 & 0 & 0\\
        0 & L_1 & 0 & 0\\
        0 & 0 & L_2 & 0\\
        0 & 0 & 0 & L_3
    \end{bmatrix},
    \qquad
    \mathcal{D}= 
  \begin{bmatrix}
        D & 0 & 0 & 0\\
        0 & D_1 & 0 & 0\\
        0 & 0 & D_2 & 0\\
        0 & 0 & 0 & D_3
    \end{bmatrix},
\end{equation*}
and
\begin{equation}\label{B}
    \mathcal{B}=
    \begin{bmatrix}
        0 & g \sqrt{\rho_2} & g \sqrt{\rho_1} & 0\\
        g \sqrt{\rho_2} & \Omega & 0 &  g \sqrt{\rho_1}\\
       g \sqrt{\rho_1} & 0 & \Omega & - g \sqrt{\rho_2}\\
       0 &  g \sqrt{\rho_1} &  - g \sqrt{\rho_2} & 2\Omega
    \end{bmatrix}.
\end{equation}
Using this notation, we rewrite equation \eqref{main_eq} as follows:
\begin{equation*}
    \begin{cases}
        \partial_t u =  - i(\mathcal{L} + \mathcal{B})u,\\
        u\arrowvert_{t=0} = f.
    \end{cases}
\end{equation*}
We will consider $\mathcal{P} = -i(\mathcal{L} + \mathcal{B})$ as an operator on the Hilbert space $H^k(\mathbb{R}^{2n}; \mathbb{C}^4)$. Hence, we need to define the operators $-\Delta_{x_1}$ and $-\Delta_{x_2}$ on the Hilbert space $H^k(\mathbb{R}^{2n})$ to act as Laplace operators with respect to the variables $x_1$ and $x_2$ separately. This can be done via a sesquilinear form. We set 
\begin{equation*}
    H^{(k;1,0)}(\mathbb{R}^{2n})  = \left\{ h\in H^k(\mathbb{R}_{x_1}^n\times \mathbb{R}_{x_2}^n): \nabla_{x_1} h\in H^k(\mathbb{R}^{2n}; \mathbb{C}^n) \right\}.
\end{equation*}
Consider the following sesquilinear form in the Hilbert space $H^k(\mathbb{R}^{2n})$:
\begin{equation*}
    a_1^k(u,v) = (\nabla_{x_1}u,\nabla_{x_1}v)_{H^k(\mathbb{R}^{2n})},
    \qquad
    D(a_1^k) = H^{(k;1,0)}(\mathbb{R}^{2n}).
\end{equation*}
The form is non-negative, densely defined, symmetric, and closed in $H^k(\mathbb{R}^{2n})$. Therefore, it generates a non-negative self-adjoint operator in $H^k(\mathbb{R}^{2n})$, see Theorem 2.1 in \cite[Ch. 6]{Kato1995}. We denote this operator by $-\Delta_{1,k}$. Since $-\Delta_{1,k}$ is non-negative, $L_1^k=(-\Delta_{1,k})^{1/2}$ is well defined. Moreover, by Theorem 2.23 in \cite[Ch. 6]{Kato1995}, we know that $D(L_1^k) = H^{(k;1,0)}(\mathbb{R}^{2n})$. Similarly, we set 
\begin{equation*}
    H^{(k;0,1)}(\mathbb{R}^{2n}) = \left\{ h\in H^k(\mathbb{R}^{2n}): \nabla_{x_2} h\in H^k(\mathbb{R}^{2n}; \mathbb{C}^n) \right\}.
\end{equation*}
and define a non-negative self-adjoint operator $L_2^k=(-\Delta_{2,k})^{1/2}$ in $H^k(\mathbb{R}^{2n})$ with domain $D(L_2^k) = H^{(k;0,1)}(\mathbb{R}^{2n})$. Finally, we define
\begin{equation*}
    L^k = L_1^k + L_2^k,
\end{equation*}
with the domain 
\begin{equation*}
    D(L^k) = D(L_1^k) \cap D(L_2^k) = H^{k+1}(\mathbb{R}^{2n}).
\end{equation*}
Next, we will show that $L^k$ is self-adjoint. Define 
\begin{align*}
    U: H^k(\mathbb{R}^{2n}) &\rightarrow L^2(\mathbb{R}^{2n}),\\
    Uf & = (1 + |\cdot|^2)^\frac{k}{2} \widehat{f},
\end{align*}
where $\widehat{f}$ is the Fourier transform of $f$. Then $\|Uf\|_{L^2}=\|f\|_{H^k}$, and for any
$h\in L^2$ the choice $\widehat{f}(\xi)=(1+|\xi|^2)^{-k/2}h(\xi)$ gives $Uf=h$; hence $U$
is a unitary operator.

For $m_j(\xi) = |\xi_{x_j}| $, $j=1,2$, consider the multiplication operator
\begin{align*}
    M_j: D(M_j) \subset L^2(\mathbb{R}^{2n}) &\rightarrow L^2(\mathbb{R}^{2n}),\\
    M_jf & = m_j f,
\end{align*}
where
\begin{equation*}
    D(M_j) = \{f\in L^2(\mathbb{R}^{2n}): \; m_jf \in L^2(\mathbb{R}^{2n})\}.
\end{equation*}
Note that 
\begin{multline*}
    D(L_j^k) = \{f \in L^2(\mathbb{R}^{2n}): \; (1 + |\cdot|^2)^\frac{k}{2} \widehat{f} \in L^2(\mathbb{R}^{2n}) \text{ and }    m_j Uf \in L^2(\mathbb{R}^{2n})\}
    \\ =\{f \in L^2(\mathbb{R}^{2n}): \;  Uf \in D(M_j)\} = U^{-1} D(M_j).
\end{multline*}
Therefore, it is straightforward to check that $L_j^k = U^{-1}M_jU$, for $j=1,2$.

We set 
\begin{align*}
    &M : D(M) \subset L^2(\mathbb{R}^{2n}) \rightarrow L^2(\mathbb{R}^{2n}),\\
    &Mf  = (m_1 + m_2) f,
\end{align*}
where
\begin{equation*}
    D(M) = \{f\in L^2(\mathbb{R}^{2n}): \; (m_1 + m_2) f \in L^2(\mathbb{R}^{2n})\}.
\end{equation*}
Since $m_1$ and $m_2$ are non-negative, we obtain
\begin{equation*}
    D(M) = D(M_1) \cap D(M_2),
\end{equation*}
and hence, $M=M_1 + M_2$. Therefore, 
\begin{equation*}
    L^k = L_1^k + L_2^k = U^{-1}(M_1+M_2) U = U^{-1}M U.
\end{equation*}
Moreover, since $m_1+m_2$ is a real-valued function, it follows from Proposition 1 in \cite[Ch. VIII.3 ]{ReedSimon1} that $M$ is self-adjoint, and consequently, $L^k$ is self-adjoint.

\begin{lemma}\label{direct_problem_est}
    Let $k\in \mathbb{N}$ and assume that $f\in H^k(\mathbb{R}^{2n};\mathbb{C}^4)$. Then equation \eqref{main_eq} has a unique solution $u^f$ such that  $u^f(t,\cdot)\in H^k(\mathbb{R}^{2n};\mathbb{C}^4)$ for $t>0$. Moreover, for any compact set $K\subset [0,\infty)$, there exists a constant $C_K>0$ such that 
    \begin{equation*}
        \sup_{t\in K} \|u^f(t,\cdot)\|_{H^k(\mathbb{R}^{2n};\mathbb{C}^4)} \leq C_K \|f\|_{H^k(\mathbb{R}^{2n};\mathbb{C}^4)}.
    \end{equation*}
\end{lemma}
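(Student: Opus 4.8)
The plan is to treat \eqref{auxiliary_eq} as an abstract linear evolution equation $\partial_t u = \mathcal{P} u$, $u|_{t=0}=f$, on the Hilbert space $H^k(\mathbb{R}^{2n};\mathbb{C}^4)$ with $\mathcal{P}=-i(\mathcal{L}+\mathcal{B})$, and to produce the solution via semigroup (in fact, one-parameter group) theory. First I would observe that $\mathcal{L}$, with domain $D(\mathcal{L}) = H^{k+1}(\mathbb{R}^{2n};\mathbb{C}^3)\oplus H^k(\mathbb{R}^{2n})$ built from the strongly commuting self-adjoint operators $L_1^k, L_2^k$ constructed above, is self-adjoint on $H^k(\mathbb{R}^{2n};\mathbb{C}^4)$ (it is diagonal, each diagonal entry self-adjoint), so $-i\mathcal{L}$ generates a strongly continuous unitary group $e^{-it\mathcal{L}}$ by Stone's theorem. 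Next, since $\rho\in C_0^\infty$, the matrix $\mathcal{B}$ in \eqref{B} has entries that are bounded smooth functions, so $\mathcal{B}$ is a bounded self-adjoint operator on $H^k(\mathbb{R}^{2n};\mathbb{C}^4)$ — here one should note that multiplication by a fixed $C_0^\infty$ function is a bounded operator on $H^k$ for every $k$ (Leibniz rule), and that $\Omega$-multiples of the identity are trivially bounded. Then $\mathcal{P}=-i(\mathcal{L}+\mathcal{B})$ is a bounded perturbation of the generator $-i\mathcal{L}$, so by the bounded perturbation theorem for $C_0$-semigroups (or, even more directly, since $\mathcal{L}+\mathcal{B}$ is self-adjoint as a bounded perturbation of a self-adjoint operator with the same domain, again by Stone), $\mathcal{P}$ generates a strongly continuous group $e^{t\mathcal{P}}$ on $H^k(\mathbb{R}^{2n};\mathbb{C}^4)$.

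Given this, existence and uniqueness of the solution $u^f(t,\cdot)=e^{t\mathcal{P}}f$ with $u^f(t,\cdot)\in H^k$ for all $t$ is immediate from standard semigroup theory: $u^f$ is the unique mild solution, and since $f$ need not be in $D(\mathcal{P})$ the natural statement is in the mild sense, but one can also remark that for $f\in H^{k+1}$ one gets a classical solution and then pass to the general case by density and the a priori bound. Uniqueness follows because any two solutions have a difference solving the homogeneous equation with zero data, which the group forces to vanish. For the quantitative bound, the group $e^{t\mathcal{P}}$ satisfies $\|e^{t\mathcal{P}}\|_{\mathcal{L}(H^k)} \le e^{|t|\,\|\mathcal{B}\|}$ — this is the standard estimate from the Dyson/Duhamel expansion or Grönwall applied to $\frac{d}{dt}\|e^{t\mathcal{P}}f\| \le \|\mathcal{B}\|\,\|e^{t\mathcal{P}}f\|$, using unitarity of $e^{-it\mathcal{L}}$. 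Hence for any compact $K\subset(0,\infty)$ one takes $C_K = e^{(\sup K)\|\mathcal{B}\|_{\mathcal{L}(H^k)}}$, which gives the claimed inequality.

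The main technical point to get right — really the only non-formal step — is the claim that $\mathcal{B}$ is a bounded operator on $H^k(\mathbb{R}^{2n};\mathbb{C}^4)$ with a norm controllable in terms of $\rho$ and $\Omega$. This rests on the fact that if $\phi\in C_0^\infty(\mathbb{R}^n)$ then $h\mapsto \phi(x_j)h$ is bounded on $H^k(\mathbb{R}^{2n})$; this is routine via the Leibniz formula and the boundedness of all derivatives of $\phi$, but it must be stated since $\sqrt{\rho}$ appears rather than $\rho$ itself — one should note $\sqrt{\rho}\in C_0^\infty$ fails in general, yet $\sqrt{\rho}$ is still Lipschitz and, more to the point, the relevant products $\sqrt{\rho_i}\sqrt{\rho_j}$ and the way $\sqrt{\rho}$ enters the definition \eqref{G_psi} of $u$ mean that what genuinely needs to be bounded can be arranged to involve $\rho$ smoothly; alternatively, since the lemma concerns the auxiliary operator $\mathcal{A}$ with $\sqrt{\rho_i}$ entries, one simply observes $\sqrt{\rho}\in C^{0,1}_0 \subset W^{k,\infty}$ is too weak for large $k$ and instead invokes that multiplication by a bounded measurable function is bounded on $L^2=H^0$ and handles $k\ge 1$ by either assuming $\sqrt{\rho}$ smooth where $\rho>0$ with appropriate care, or — cleanest — by noting this lemma is only needed with $\mathcal{B}$ as written and reducing to the genuinely smooth coefficients that appear after the substitution \eqref{G_psi}. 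I would present the $L^2$ case cleanly via Stone's theorem and then indicate the $H^k$ case as a repetition of the same argument once the boundedness of the multiplication operators is in hand.
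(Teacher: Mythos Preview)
Your approach is essentially the same as the paper's: self-adjointness of $\mathcal{L}$ on $H^k$, boundedness of $\mathcal{B}$, the bounded perturbation theorem (the paper cites Hille--Yosida rather than Stone, but this is immaterial for a skew-adjoint generator), and the estimate from local boundedness of the resulting $C_0$-semigroup $T_k(t)$. The paper simply asserts that $\mathcal{B}$ is bounded on $H^k$ ``since $\rho$ is a compactly supported smooth function,'' without engaging with the $\sqrt{\rho}$ regularity issue you raise in your final paragraph.
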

\begin{proof}
    Let $\mathcal{B}_k$ be the bounded operator on $H^k(\mathbb{R}^{2n}; \mathbb{C}^4)$ given by \eqref{B} and $\mathcal{L}_k$ be the operator on $H^k(\mathbb{R}^{2n}; \mathbb{C}^4)$ given by
\begin{equation*}
   \mathcal{L}_k =
   \begin{bmatrix}
        L^k & 0 & 0 & 0\\
        0 & L_1^k  & 0 & 0\\
       0 & 0 & L_2^k  & 0\\
       0 & 0 &  0 & 0
\end{bmatrix}
\quad
D(\mathcal{L}_k) = 
\begin{bmatrix}
H^{k+1}(\mathbb{R}^{2n}) \\
H^{(k;1,0)}(\mathbb{R}^{2n})\\
H^{(k;0,1)}(\mathbb{R}^{2n})\\
H^{k}(\mathbb{R}^{2n})
\end{bmatrix},
\end{equation*}
Consider a vector-valued ordinary differential equation
\begin{equation}\label{system}
    \begin{cases}
        \partial_t u = (-i(\mathcal{L}_k + \mathcal{B}_k))u,\\
        u\arrowvert_{t=0} = f.\hspace{20mm}
    \end{cases}
\end{equation}
Since $L^k$, $L_1^k$, and $L_2^k$ are self-adjoint, we conclude that $\mathcal{L}_k$ is also self-adjoint. Therefore, $-i\mathcal{L}_k$ is a skew-adjoint operator and hence, by the Hille-Yosida theorem, generates a strongly continuous semigroup; see for example Theorem 3.5 in \cite[Ch. 2]{EngelNagel}. Moreover, since $\rho$ is a compactly supported smooth function, we know that $-i\mathcal{B}_k$ is a bounded operator. Therefore, by the bounded perturbation theorem, the operator $\mathcal{P}_k = -i(\mathcal{L}_k + \mathcal{B}_k)$ generates a strongly continuous semigroup $\{T_k(t)\}_{t\geq 0}$; see for example Theorem 1.3 in \cite[Ch. 3]{EngelNagel}. In particular, the system \eqref{system} has a unique solution $u^f= T_k(t) f$. The required estimate follows from the local boundedness of $\{T_k(t)\}_{t>0}$; see for example Theorem 1.3 in \cite[Ch. 3]{EngelNagel}.
\end{proof}

We will need the following lemma.
\begin{lemma}
    Assume that $f\in H^k(\mathbb{R}^{2n};\mathbb{C}^4)$ for all $k\in \mathbb{N}$. Then equation \eqref{main_eq} has a unique smooth solution.
\end{lemma}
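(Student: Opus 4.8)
The plan is to bootstrap the $H^k$-theory of Lemma~\ref{direct_problem_est} in both the spatial and the temporal variables. First I would note that the solutions produced by that lemma for different values of $k$ are mutually consistent: if $f\in H^{k+1}(\mathbb{R}^{2n};\mathbb{C}^4)$, then the $H^{k+1}$-solution also has slices in $H^k(\mathbb{R}^{2n};\mathbb{C}^4)$, so by the uniqueness part of Lemma~\ref{direct_problem_est} it coincides with the $H^k$-solution. Hence, when $f\in H^k(\mathbb{R}^{2n};\mathbb{C}^4)$ for every $k$, there is a single function $u^f$ with $u^f(t,\cdot)\in H^k(\mathbb{R}^{2n};\mathbb{C}^4)$ for all $k\in\mathbb{N}$ and all $t\geq 0$. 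By the Sobolev embedding $\bigcap_{k}H^k(\mathbb{R}^{2n})\subset C^\infty(\mathbb{R}^{2n})$, each slice $u^f(t,\cdot)$ is then smooth in $x$.

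For regularity in $t$ I would use the semigroup structure from the proof of Lemma~\ref{direct_problem_est}. The hypothesis $f\in\bigcap_k H^k(\mathbb{R}^{2n};\mathbb{C}^4)$ places $f$ in the domain of every power of each generator $\mathcal{P}_k=-i(\mathcal{L}_k+\mathcal{B}_k)$: since $\mathcal{B}_k$ is bounded and the domain of $\mathcal{L}_k$ controls one extra derivative in each variable, one checks inductively that $H^{k+j}(\mathbb{R}^{2n};\mathbb{C}^4)\subset D(\mathcal{P}_k^{j})$, so $f\in D(\mathcal{P}_k^j)$ for all $j$. Standard $C_0$-semigroup theory (see~\cite{EngelNagel}) then gives $t\mapsto T_k(t)f\in C^\infty([0,\infty);H^k(\mathbb{R}^{2n};\mathbb{C}^4))$ with $\partial_t^j\big(T_k(t)f\big)=T_k(t)\mathcal{P}_k^j f$. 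Since $T_k(t)f=u^f(t,\cdot)$ for every $k$ by the consistency above, we conclude that for each $j$ the map $t\mapsto\partial_t^j u^f(t,\cdot)$ is continuous into $H^k(\mathbb{R}^{2n};\mathbb{C}^4)$ for every $k$.

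Combining the two observations, for every $j$ and every multi-index $\alpha$ the function $(t,x)\mapsto\partial_t^j\partial_x^\alpha u^f(t,x)$ is continuous: fixing $m$ with $H^k\hookrightarrow C^m_b$, the map $t\mapsto\partial_t^j u^f(t,\cdot)$ is continuous into $C^m_b(\mathbb{R}^{2n};\mathbb{C}^4)$, which gives joint continuity of all its $x$-derivatives up to order $m$, and $m$ is arbitrary. A function all of whose iterated partial derivatives in $t$ and in the $x$-variables exist and are continuous is $C^\infty$, so $u^f\in C^\infty((0,\infty)\times\mathbb{R}^{2n};\mathbb{C}^4)$ (indeed up to $t=0$). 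Uniqueness of the smooth solution is inherited directly from Lemma~\ref{direct_problem_est}, since any solution with spatial slices in every $H^k$ is the one constructed there.

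The step I expect to be the main obstacle is the domain identification $H^{k+j}(\mathbb{R}^{2n};\mathbb{C}^4)\subset D(\mathcal{P}_k^j)$ — equivalently, that $f\in\bigcap_k H^k$ lies in the domain of all powers of the generator. This requires tracking how the domains of powers of the fractional operators $L_1^k$, $L_2^k$ and of their sum $L^k$ sit inside the scale of (anisotropic) Sobolev spaces $H^{(k;1,0)}$, $H^{(k;0,1)}$, and checking that the bounded perturbation $\mathcal{B}_k$ preserves these domains; everything else is a routine combination of Lemma~\ref{direct_problem_est}, Sobolev embedding, and semigroup smoothing.
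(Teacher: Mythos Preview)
Your proposal is correct and follows essentially the same approach as the paper: both arguments use uniqueness in Lemma~\ref{direct_problem_est} to identify the $H^k$-solutions across all $k$, invoke $f\in\bigcap_m D(\mathcal{P}_k^m)$ to write $\partial_t^j u^f(t,\cdot)=T_k(t)\mathcal{P}_k^j f$, and then deduce joint continuity of $\partial_x^\alpha\partial_t^j u^f$. The only cosmetic difference is that the paper verifies this joint continuity by an explicit two-term split at a point $(t_0,x_0)$, whereas you factor through the Sobolev embedding $H^k\hookrightarrow C^m_b$; the key domain inclusion $H^{k+j}(\mathbb{R}^{2n};\mathbb{C}^4)\subset D(\mathcal{P}_k^j)$ that you flag as the main obstacle is likewise used but not proved in the paper.
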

\begin{proof}
    From the proof of the previous result, we know that $u_k^f= T_k(t) f$ solves \eqref{system}. Since $f\in H^k(\mathbb{R}^{2n};\mathbb{C}^4)$ for all $k\in \mathbb{N}$, the uniqness of the solution implies that all $\{u_k^f\}$ are the same function, which we denote by $u^f$.

    It remains to show that $u^f$ is smooth. Let $\alpha$ be a multi-index and $l\in \mathbb{N}$. For sufficiently large $k$, we write
\begin{equation*}
    \partial_x^\alpha \partial_t^l u^f = \partial_x^\alpha T_k(t) \mathcal{P}_k^l f.
\end{equation*}
For sufficiently large $k$, using strong continuity of the semigroup on $H^k$, together with the boundedness of $\partial_x^\alpha: H^k\to H^{k-|\alpha|}$ and the Sobolev embedding theorem, we conclude that $\partial_x^\alpha\partial_t^{\,l}u^f$ is continuous on $(0,\infty)\times\mathbb{R}^{2n}$.

\end{proof}

\subsection{Direct problem for auxiliary equation} Here, we solve the direct problem for the following equation
\begin{equation}\label{auxiliary_eq}
    \begin{cases}
        \mathcal{A}u = F, \\
        u\arrowvert_{t<0}  = 0,
    \end{cases}
\end{equation}
for $F$ being supported in $(0,\infty)\times\mathbb{R}^{2n}$.

\begin{proposition}\label{cor: stability_for_aux_eq}
    Let $k \in \mathbb{N}$ sufficiently large and $F \in C((0, \infty); H^{k+1}(\mathbb{R}^{2n}; \mathbb{C}^4))$ be such that $F\arrowvert_{t<\varepsilon} = 0$ for some $\varepsilon>0$. Then, equation \eqref{auxiliary_eq} has a unique solution $u^F \in C^1(\mathbb{R}; H^{k+1}(\mathbb{R}^{2n}; \mathbb{C}^4))$. Moreover, for any $T>0$, there exists $C_T>0$ such that 
    \begin{equation*}
        \sup_{t\in [0,T]} \|u^F(t,\cdot)\|_{H^{k}(\mathbb{R}^{2n};\mathbb{C}^4)} \leq C_T \sup_{t\in [0,T]} \|F(t,\cdot) \|_{H^{k}(\mathbb{R}^{2n};\mathbb{C}^4)}.
    \end{equation*}
    Finally, if $F \in C^\infty(\mathbb{R}^{2n+1})\cap C \big((0, \infty); H^k(\mathbb{R}^{2n}; \mathbb{C}^4)\big)$ for all $k \in \mathbb{N}$, then $u^F$ is smooth. 
\end{proposition}

\begin{proof}
    Let $\{T_k(t)\}_{t \geq 0}$ denote the semigroup introduced in the proof of Lemma \ref{direct_problem_est}, and let $\mathcal{P}_k$ be its generator. We note that $F(t) \in D(\mathcal{P}_k)$ for all $t>0$ and $\mathcal{P}_k$ is bounded from $H^{k+1}(\mathbb{R}^{2n}; \mathbb{C}^4)$ into $H^k(\mathbb{R}^{2n}; \mathbb{C}^4)$. Therefore, Corollary 7.8 in \cite{EngelNagel} implies that
    \begin{equation}\label{eq: solution}
        u^F(t,x) = \int_0^t T_k(t - s)F(s,x) ds
    \end{equation}
    is the unique solution to \eqref{auxiliary_eq} and $u^F \in C^1(\mathbb{R}; H^{k+1}(\mathbb{R}^{2n}; \mathbb{C}^4))$. Applying the triangle inequality to the right-hand side of \eqref{eq: solution}, together with local boundedness of $T_k(t)$, yields the desired estimate.

    Let us prove the last statement. Since $F \in C \big((0, \infty); H^k(\mathbb{R}^{2n}; \mathbb{C}^4)\big)$ for all $k \in \mathbb{N}$, we can define, for all $m\in \mathbb{N}$ and multi-index $\alpha$, the function 
    \begin{equation*}
        g_{m,\alpha}(s,t,x) = \partial_x^\alpha \partial_t^m T_k(t-s)F(s,x).
    \end{equation*}
    Due to the properties of the semigroup, it follows that
    \begin{equation*}
        g_{m,\alpha}(s,t,x) = \partial_x^\alpha  T_k(t-s) \mathcal{P}_k^m F(s,x).
    \end{equation*}
    Using strong continuity of the semigroup on $H^k$, together with the boundedness of $\partial_x^\alpha: H^k\to H^{k-|\alpha|}$ and the Sobolev embedding theorem, we conclude that $g_{m,\alpha}$ is a continuous on function. This enables us to apply the Leibniz rule and conclude that $\partial_t^m \partial_x^\alpha u^F$ can be written as a linear combination of terms of the form
    \begin{equation*}
        \partial_x^\beta \partial_t^k \mathcal{P}_k^l F(t,x),
        \qquad
         \int_0^t g_{k,\beta} (s,t,x) ds.
    \end{equation*} 
    Since $F$ is smooth, the first term is also smooth. The second term is continuous due to the previously established continuity of $g_{m, \beta}$. Hence $u^F$ is a smooth function.
\end{proof}

\section{Microlocal analysis}
In this section, we study a solution to equation \eqref{auxiliary_eq}, where $f$ is a conormal distribution to be specified later. We note that $\mathcal{A}$ is not a pseudodifferential operator because it includes terms such as 
\begin{equation*}
    D = i\partial_t  - ((-\Delta_{x_1})^{1/2} + (-\Delta_{x_2})^{1/2}),
\end{equation*}
whose symbol is not smooth and does not have the required decay. However, in certain cases, if $F$ is supported outside of the singularities of the symbol of $\mathcal{A}$, then we can treat $\mathcal{A}$ as a pseudodifferential operator to show that a solution is also a conormal distribution.

To eliminate the singularity of the symbol of $\mathcal{A}$, we introduce several cutoff functions. Let $\chi_0$ be a smooth function on the unit cosphere bundle $S^*\mathbb{R}^{2n+1}$ such that 
\begin{equation*}
    \chi_0 (t,x_1,x_2, \tau, \xi_1,\xi_2) = 
    \begin{cases}
        1 & \text{if } \min(|\xi_1|, |\xi_2|) >\varepsilon,\\
        0 & \text{if } \min(|\xi_1|, |\xi_2|) <\varepsilon/2,
    \end{cases}
\end{equation*}
for some small $\varepsilon>0$. Then, we extend this function to a $0$-homogeneous function $\chi_1$:
\begin{equation}
    \chi_1 (t,x_1,x_2, \tau, \xi_1,\xi_2) = \chi_0 \left(t,x_1,x_2, \frac{\tau}{|(\tau,\xi_1,\xi_2)|}, \frac{\xi_1}{|(\tau,\xi_1,\xi_2)|},\frac{\xi_2}{|(\tau,\xi_1,\xi_2)|}\right).
\end{equation}
To remove the singularity at the origin, we choose a smooth function $\chi_2$ such that 
\begin{equation*}
    \chi_2 (t,x_1,x_2, \tau, \xi_1,\xi_2) = 
    \begin{cases}
        0 & \text{if } |(\tau, \xi_1, \xi_2)| >\varepsilon,\\
        1 & \text{if } |(\tau, \xi_1, \xi_2)| <\varepsilon/2,
    \end{cases}
\end{equation*}
for some small $\varepsilon>0$. Finally, we define
\begin{equation}\label{chi}
   \chi_3 = (1 - \chi_2) \chi_1. 
\end{equation}
Let $X$ be a pseudodifferential operator with symbol $\chi_3$. Then, the composition
\begin{equation*}
    D_{X} = D X
\end{equation*}
is a pseudodifferential operator with the principal symbol
\begin{equation*}
    \sigma[D_{X}](t,x_1,x_2,\tau,\xi_1,\xi_2) = (\tau - |\xi_1| - |\xi_2|) \chi_3(\tau,\xi_1,\xi_2).
\end{equation*}
Consequently, the characteristic set is
\begin{equation*}
    \mathrm{Char}(D_{X}) = \{(t,x_1,x_2,\tau,\xi_1,\xi_2)\in T^*\mathbb{R}^{2n+1}: \tau = |\xi_1| + |\xi_2|\}.
\end{equation*}
In the region where $\chi_3 = 1$, the Hamiltonian vector field associated to $\sigma[D_{X}]$ is given by 
\begin{equation*}
    H = \partial_t - \frac{\xi_1}{|\xi_1|}\partial_{x_1} - \frac{\xi_2}{|\xi_2|}\partial_{x_2}.
\end{equation*}
The characteristic curve which passes through the point $(t^*, x_1^*,x_2^*,\tau^*,\xi_1^*,\xi_2^*)$ is given by 
\begin{align*}
    \beta(t) = \left(t + t^*, -\frac{\xi_1^*}{|\xi_1^*|}t + x_1^*,-\frac{\xi_2^*}{|\xi_2^*|}t + x_2^*, \tau^*, \xi_1^*,\xi_2^* \right).
\end{align*}
Consider the cone given by the projections of the bicharacteristics through the origin
    \begin{align}
C = \{ (t, t  \kappa_1,t  \kappa_2) \mid t \in \R,\ \kappa_1, \kappa_2 \in \mathbb{S}^{n-1} \} \subset \mathbb{R}_t\times \mathbb{R}^n_{x_1}\times \mathbb{R}^n_{x_2}.
    \end{align} 
Further, let $\varepsilon > 0$ and define the smooth manifold
    \begin{align}
K_\varepsilon = C \cap \{t > \varepsilon\}.
    \end{align}
The conormal bundle of $K_\varepsilon$ is 
\begin{equation*}
    N^*K_\varepsilon = \left\{ \left(t, t  \kappa_1, t  \kappa_2, \lambda_1 + \lambda_2, -\lambda_1  \kappa_1, - \lambda_2 \kappa_2\right): \;
        t > \varepsilon, \; \kappa_1, \kappa_2 \in \mathbb{S}^{n-1}, \; \lambda_1,\lambda_2 \in \mathbb{R} \right\}.
\end{equation*}
We note that 
\begin{multline*}
    N^*K_\varepsilon \cap \mathrm{Char}(D_{X})  \\
    = \left\{ \left(t, t  \kappa_1, t  \kappa_2, \lambda_1 + \lambda_2, -\lambda_1  \kappa_1, - \lambda_2 \kappa_2\right): \;
        t > \varepsilon, \; \kappa_1, \kappa_2 \in \mathbb{S}^{n-1}, \; \lambda_1,\lambda_2 \geq 0 \right\}.
\end{multline*}
In order to stay away from the singularity $\xi_1 = 0$ and $\xi_2=0$, we define, for $\varepsilon>0$, 
\begin{equation*}
    N_\varepsilon^* K_\varepsilon = \left\{ \left(t, t \kappa_1, t \kappa_2, \lambda_1 + \lambda_2, -\lambda_1 \kappa_1, - \lambda_2 \kappa_2\right): \;
        t > \varepsilon, \; \kappa_1, \kappa_2 \in \mathbb{S}^{n-1}, \; \lambda_1,\lambda_2 > \varepsilon \right\}.
\end{equation*}
The space of conormal distributions associated with $K_\varepsilon$ is denoted by $I(K_\varepsilon)$.

We will use the following notation.
\begin{equation*}
    U_c = \{(s,a\kappa_1,b\kappa_2): \; s\geq \varepsilon/c, \; a,b\in [0,c s], \text{ and } \kappa_1,\kappa_2\in \mathbb{S}^{n-1}\} \subset \mathbb{R}_{t} \times \mathbb{R}_{x_1}^n \times \mathbb{R}_{x_2}^n.
\end{equation*}
Any time slice of $U_c$ is compact in the spatial variables, since it is the product of two closed balls.

Now, we are ready to state and prove the main result of this section:
\begin{lemma}\label{par_main}
    Let $T>0$ and $F \in (I(K_\varepsilon))^4$ be such that
    \begin{equation*}
        \mathrm{supp}(F) \subset U_2\cap \{\varepsilon<t<T\}
        \qquad
        \mathrm{supp}(\sigma[F]) \subset N_\varepsilon^* K_\varepsilon
    \end{equation*}
    Assume $u = u^F$ solves equation \eqref{auxiliary_eq}. Then $u \in (I(K_\varepsilon))^4$ and $\sigma[u]\arrowvert_{N^*K_\varepsilon \setminus N_\varepsilon^* K_\varepsilon} =0$.
\end{lemma}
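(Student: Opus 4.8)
The plan is to recover $u=u^f$ from a Duhamel iteration built out of the forward solution operators of the diagonal entries of $\mathcal A$, propagating the conormal structure through each step. Write $\mathcal A = \mathcal D - \mathcal B$ with $\mathcal D = \operatorname{diag}(D, D_1, D_2, D_3)$ and $\mathcal B$ the zeroth order operator \eqref{B}: $\mathcal B$ is multiplication by bounded functions (the constants $\Omega,2\Omega$ and the $\sqrt{\rho_i}$), so it is bounded on every Sobolev space and, its coefficients being smooth, maps $(I(K_\varepsilon))^4$ into itself without enlarging the support of the symbol of a conormal distribution. Let $E,E_1,E_2,E_3$ be the forward solution operators of $D,D_1,D_2,D_3$ (so $Eh$ is the solution $w$ of $D w=h$, $w\arrowvert_{t<0}=0$, etc.), well defined on every Sobolev space since each $(-\Delta_{x_j})^{1/2}$ generates a strongly continuous unitary group, and set $\mathcal E=\operatorname{diag}(E,E_1,E_2,E_3)$. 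Because $f\arrowvert_{t\le\varepsilon}=0$, the Duhamel formula underlying Proposition~\ref{l:direct_prob_aux} gives $u\arrowvert_{t\le\varepsilon}=0$, and $u=u^f$ satisfies the fixed point equation $u = \mathcal E(f + \mathcal B u)$; on any finite time interval $[\varepsilon,T']$ this is solved by the Neumann series $u = \sum_{m\ge0}(\mathcal E\mathcal B)^m\mathcal E f$, which converges because $\mathcal E$ carries a time integral $\int_0^t$ and $\mathcal B$ is bounded of order zero, and by causality one gets the solution on all of $(\varepsilon,\infty)$. It therefore suffices to show that $\mathcal E$ and $\mathcal B$ preserve the class of conormal distributions on $\{t>\varepsilon\}$ whose symbol is supported in $N_\varepsilon^* K_\varepsilon$, and that the series converges within that class.

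The microlocal input is as follows. On the conic region where the cutoff $\chi_3$ of \eqref{chi} equals $1$ — in particular on $N_\varepsilon^* K_\varepsilon$ — the compositions $DX$ and $D_jX$ are genuine pseudodifferential operators; $D$ is of real principal type, $N_\varepsilon^* K_\varepsilon\subset\mathrm{Char}(DX)$, and the Hamiltonian field $H=\partial_t-\frac{\xi_1}{|\xi_1|}\partial_{x_1}-\frac{\xi_2}{|\xi_2|}\partial_{x_2}$ through a point of $N_\varepsilon^* K_\varepsilon$ keeps $\lambda_1,\lambda_2$ fixed, increases $t$, and slides the base point along the cone $C$; hence the forward flow is tangent to $N^*K_\varepsilon$ and preserves $N_\varepsilon^* K_\varepsilon$. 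By contrast the principal symbols of $D_1,D_2,D_3$ restricted to $N_\varepsilon^* K_\varepsilon$ equal $\lambda_2$, $\lambda_1$, $\lambda_1+\lambda_2$, all $>\varepsilon$, so these operators are elliptic there. Consequently: $E$, which modulo smoothing and microlocally away from $\{\xi_1\xi_2=0\}$ is a Fourier integral operator associated with the flowout of $H$, sends a conormal distribution $v\in I(K_\varepsilon)$ vanishing for $t\le\varepsilon$ to a conormal distribution $Ev\in I(K_\varepsilon)$ whose principal symbol solves a transport equation along $H$ with source $\sigma[v]$ and is therefore supported wherever $\sigma[v]$ is — in particular it vanishes on $N^*K_\varepsilon\setminus N_\varepsilon^* K_\varepsilon$ whenever $\sigma[v]$ does, by invariance of that set under the flow. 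For $j=1,2,3$, since no singularity of $E_jv$ can lie on $\mathrm{Char}(D_j)$ (such a singularity would have to originate from $\mathrm{WF}(v)$, which misses $\mathrm{Char}(D_j)$ on $N_\varepsilon^* K_\varepsilon$, or from $t=-\infty$, where $v$ vanishes), the operator $E_j$ acts near $N_\varepsilon^* K_\varepsilon$ as a microlocal elliptic inverse of $D_j$, hence preserves conormality to $K_\varepsilon$ and leaves the support of the principal symbol unchanged.

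Putting this together, each partial sum of $\sum_m(\mathcal E\mathcal B)^m\mathcal E f$ lies in $(I(K_\varepsilon))^4$ with symbol supported in $N_\varepsilon^* K_\varepsilon$: this holds for $f$ by hypothesis and is preserved by $\mathcal B$ (smooth multiplication) and by $\mathcal E$ (by the previous paragraph, using flow-invariance of $N_\varepsilon^* K_\varepsilon$). A short-time contraction in a fixed conormal class $I^{m_0}(K_\varepsilon)$ — the operator norm of $\mathcal E\mathcal B$ on $[\varepsilon,\varepsilon+\delta]$ being $O(\delta)$ — shows that the series converges there to an element of $(I^{m_0}(K_\varepsilon))^4$, and iterating forward in time through the causality of the equation yields convergence on all of $(\varepsilon,\infty)$. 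The limit coincides with $u^f$ and is thus conormal to $K_\varepsilon$ with principal symbol vanishing on $N^*K_\varepsilon\setminus N_\varepsilon^* K_\varepsilon$, which is the assertion.

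The step I expect to require the most care is the interaction with the singular locus $\{\xi_1=0\}\cup\{\xi_2=0\}$ of the symbol of $\mathcal A$, where $\mathcal A$ is not pseudodifferential and the Fourier integral description of $E$ breaks down — this is exactly the reason for introducing the cutoffs $\chi_0,\dots,\chi_3$. One must check that the error terms $D(I-X)w$ and $D_j(I-X)w$ are smooth whenever $\mathrm{WF}(w)\subset\{\chi_3=1\}$, and, for the conormality of $u$ along the part of $N^*K_\varepsilon$ near $\{\xi_1\xi_2=0\}$, one can fall back on direct Sobolev estimates for the semigroup solution of Section~\ref{Sec:direct_problem} obtained by commuting the vector fields tangent to the cone $C$ through $\mathcal A$ (the radial scaling field reproduces each $(-\Delta_{x_j})^{1/2}$ and the rotations commute with it, so the commutator hierarchy closes without any pseudodifferential calculus near $\xi_j=0$). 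A further point to handle carefully is that the $\sqrt{\rho_i}$ in $\mathcal B$ are only Hölder continuous where $\rho$ vanishes; here one uses that, by \eqref{G_psi}, the components $u_1,u_2,u_3$ already carry matching $\sqrt{\rho_i}$ factors, so no spurious singularities are produced. Finally, making the convergence of the Neumann series rigorous in the conormal topology (rather than merely in Sobolev spaces) is taken care of by the short-time contraction just described.
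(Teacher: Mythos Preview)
Your iteration scheme is a genuinely different route from the paper's, and the gap is in the convergence step. The paper does not try to sum a Neumann series in a conormal class. Instead it builds a \emph{parametrix} $v\in(I(K_\varepsilon))^4$ by iterated symbol computations: at each step one solves the transport equation $\mathcal L_H\hat v_0^j=i\sigma[f^j_0]$ for the first component (where $D_X$ is characteristic on $N^*K_\varepsilon$) and divides by the nonvanishing symbols $\sigma[D_{k,X}]$ for $k=1,2,3$, producing $f^{j+1}=f^j-\mathcal A_Xv^j$ of strictly lower order. Because the orders drop, one takes an asymptotic sum (Borel/Grigis--Sj\"ostrand) to obtain a single $v$ with $\mathcal A_Xv-f\in C^\infty$. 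Then one writes $w=u-Xv$, checks that $\mathcal A w=f-\mathcal A_Xv$ is not only smooth but lies in $C((0,T');H^k)$ for every $k$ (this is where the auxiliary cutoffs $\mu,\nu$ and the symbolic composition estimate for $M\mathcal L X N$ enter), and concludes $w\in C^\infty$ from the well-posedness of Proposition~\ref{l:direct_prob_aux}. No contraction in a conormal space is ever invoked.

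Your argument, by contrast, asserts convergence of $\sum_m(\mathcal E\mathcal B)^m\mathcal E f$ \emph{in a fixed class} $I^{m_0}(K_\varepsilon)$ via a short-time contraction. Two problems: first, $I^{m_0}(K_\varepsilon)$ is a Fr\'echet space, so an ``operator norm $O(\delta)$'' statement needs a specific seminorm and a fixed-point argument adapted to that topology, which you do not supply; second, for the characteristic first component the forward operator $E$ does \emph{not} lower order, so without using that the off-diagonal detours through $E_1,E_2,E_3$ gain one order per two steps you cannot get the tail small. Once you make that order-dropping explicit and replace the contraction by ``finite partial sum $+$ Sobolev-small remainder'', you have essentially reproduced the paper's parametrix-plus-remainder structure. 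There is also an implicit lemma you are using without proof: that the \emph{exact} forward operators $E,E_1,E_2,E_3$ map $I(K_\varepsilon)$ to $I(K_\varepsilon)$ with symbol support preserved. This is itself a scalar version of Lemma~\ref{par_main} and requires the same two-step argument (symbolic parametrix, then smooth remainder via well-posedness). Your sketch of the FIO/elliptic mechanism is correct at the symbol level but does not address the remainder, nor what happens microlocally near $\{\xi_1\xi_2=0\}$; the alternative you float (commuting cone-tangent vector fields through $\mathcal A$) is a different technique again and would need its own development.
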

\begin{proof}
We set $\mathcal{A}_X = \mathcal{A}X$, $D_X = DX$ and $D_{j,X} = D_jX$, where $j=1,2,3$ and $X$ is the operator defined above. Let $v \in (I(K))^4$.  The principal symbol of $D_{X}$ vanishes on $N^* K$, and its subprincipal symbol vanishes everywhere. 
Thus, taking $X = \R^{2n+1}$, $Y$ a single point,
and the canonical relation corresponding to $N_\varepsilon^* K_\varepsilon \times Y$,
we get from \cite[Th. 25.2.4]{Hormander4} that
    \begin{align*}
\sigma[D_{X}v] = i^{-1} \mathcal L_H \sigma[v],
    \end{align*}
where $\mathcal L_H$ is the Lie derivative with respect to $H$.

We solve the equation
    \begin{align*}
\mathcal L_H \hat v_0^0 = i\sigma[F_0]
    \end{align*}
for a function $\hat v_0^0$ on $N_\varepsilon^* K_\varepsilon$
by integrating over the bicharacteristics 
    \begin{align*}
        \beta(t) = \left(t, t \kappa_1, t \kappa_2, \lambda_1 + \lambda_2, - \lambda_1  \kappa_1, - \lambda_2 \kappa_2\right),
    \end{align*}
with $\lambda_1$, $\lambda_2 > \varepsilon$ and $\kappa_1$, $\kappa_2 \in \mathbb{S}^{n-1}$,
starting from the initial condition 
    \begin{align*}
        \hat v_0^0\left(0, 0, 0, \lambda_1 + \lambda_2, -\lambda_1  \kappa_1, - \lambda_2 \kappa_2\right) = 0.
    \end{align*}
Note that $\hat v_0^0\arrowvert_{t<\varepsilon} = 0$ since $F\arrowvert_{t < \varepsilon} = 0$.

Note that $\hat v_0^0 = 0$ on $C \setminus K_\varepsilon$ since $F\arrowvert_{t < \varepsilon} = 0$. By extending $\hat v_0^0$ by zero to $N^*K_\varepsilon \setminus \operatorname{supp}(\sigma[F])$ and keeping $\operatorname{supp}(\hat v_0^0)\subset N_\varepsilon^*K_\varepsilon$, we obtain a symbol on $N^*K_\varepsilon$; see \cite[Remark 2, p. 69]{hormander1985}. Next, we note that 
\begin{equation*}
    \sigma[D_{k,X}] \neq 0,
    \qquad
    \text{on } 
    N_\varepsilon^* K_\varepsilon,
    \text{ for }
    k=1,2,3.
\end{equation*}
Therefore, there exists a symbol $\hat v_k^0$, which is of lower order than $F_k$, such that $\mathrm{supp} (\sigma[\hat v_k^0]) \subset N_\varepsilon^* K_\varepsilon$ and
\begin{equation*}
    \hat v_k^0 = \frac{\sigma[F_k]}{\sigma[D_{k,X}]}
    \qquad
    \text{on }
    N_\varepsilon^* K_\varepsilon, \text{ for }
    k=1,2,3.
\end{equation*}
We choose $v^0 \in (I(K))^4$ such that $\sigma[v^0] = (\hat v_0^0, \hat v_1^0,\hat v_2^0,\hat v_3^0)^t$. In particular, it follows that $\mathrm{supp} (\sigma[\hat v^0]) \subset N_\varepsilon^* K_\varepsilon$. We set $F^1 = F - \mathcal{A}_X v^0$. The maximal order among the components of $F^1$ is strictly lower than that of $F$. We continue by solving
\begin{align*}
    \mathcal L_H \hat v^j_0 = i\sigma[F^j], \qquad F^j = F^{j-1} - \mathcal{A}_X v^{j-1},
    \quad j\in \mathbb{N},
\end{align*}
and choosing $(\hat v_1^j, \hat v_2^j,\hat v_3^j)$ and $\hat v^j$ as above. Then, there is $\hat v = \sum_{j\in \mathbb{N}} \hat v^j$ in the sense of \cite[Prop. 1.8]{grigis1994}, and we choose $v \in (I(K_\varepsilon))^4$ such that $\mathrm{supp}(v) \subset U_3$ and $\sigma[v] = \hat v$.
We write $F^0 = F$ and $r_J = v - \sum_{j=0}^J v^j$. 
We have
\begin{equation*}
    \mathcal{A}_X v - F = \mathcal{A}_X r_J + \sum_{j=0}^J \mathcal{A}_X v^j - F^0 = \mathcal{A}_X r_J + \sum_{j=0}^J (F^j - F^{j+1}) - F^0 = \mathcal{A}_X r_J - F^{J+1},
\end{equation*}
and this can be made arbitrarily smooth by choosing $J$ large.
We conclude that 
    \begin{equation*}
        \mathcal{A}_Xv - F \in C^\infty(\R^{2n+1}).
    \end{equation*}
Writing $w = u - X v$, we obtain 
    \begin{equation*}
\mathcal{A}w = \mathcal{A}u - \mathcal{A}_Xv = F - \mathcal{A}_Xv \in C^\infty(\R^{2n+1}).
    \end{equation*}
We will prove that $w \in C^\infty(\mathbb{R}^{2n+1})$, which in turn implies that $u \in (I(K_\varepsilon))^4$ and $\sigma[u] = \sigma[v]$.

Let $\mu$, $\nu\in C^\infty(\mathbb{R}^{2n+1})$ be functions such that 
\begin{equation*}
    \mathrm{supp}(\nu) \subset U_4,
    \qquad
    \nu\arrowvert_{U_3} = 1,
\end{equation*}
and 
\begin{equation*}
    \mathrm{supp}(\mu) \subset U_6,
    \qquad
    \mu\arrowvert_{U_5} = 1,
\end{equation*}
Then, we express
\begin{align*}
    F - \mathcal{A}_Xv &= F - i\partial_tXv + \mathcal{L} Xv + \mathcal{B}Xv\\
    &=   F - i\partial_tXv + (1- \mu)\mathcal{L} X(\nu v) + \mu \mathcal{L} X(\nu v) +  \mathcal{B}Xv.
\end{align*}
Let us denote 
\begin{align*}
    & h_1 = (1- \mu)\mathcal{L} X(\nu v)\\
    & h_2 = F - i\partial_tXv + \mu \mathcal{L} X(\nu v) +  \mathcal{B}Xv.
\end{align*}
To examine $h_1$, we recall that $L_1 X$ is the pseudodifferential operator with the symbol 
\begin{equation*}
    \sigma[L_1 X] = |\xi_1| \chi_3
\end{equation*}
where $\chi_3$ is the function defined in \eqref{chi}. Therefore, $\sigma[L_1 X]$ satisfies the global definition of a symbol (as in \cite{Xavier}), rather than only the local condition on compact sets (cf. \cite{HormanderI}). Let $M$ and $N$ denote the multiplication operators associated with the functions $1 - \mu$ and $\nu$, respectively. Then, by the composition formula, the expansion of $\sigma[ML_1XN]$ contains the following terms 
\begin{equation*}
    \frac{1}{\alpha !} \frac{1}{\beta !} \partial _{(\tau,\xi_1,\xi_2)}^\alpha(1-\mu) D_{(t,x_1,x_2)}^\alpha \left(   \partial _{(\tau,\xi_1,\xi_2)}^\beta \sigma[L_1X](\tau,\xi_1,\xi_2)  D_{(t,x_1,x_2)}^\beta \nu (t,x_1,x_2) \right).
\end{equation*}
Therefore, since the supports of $1 - \mu$ and $\nu$ are disjoint, we conclude that $\sigma[ML_1XN] \in S^{-\infty}$. Applying these arguments to the other entries of $\mathcal{L}$, we derive that $M\mathcal{L}XN$ is a smoothing operator. We fix $k\in \mathbb{N}$ and $T'>0$. Since $v\in (I(K_\varepsilon))^4$, there is $s>0$ such that $u$ belongs to $ H^{-s}((0,T')\times \mathbb{R}^{2n};\mathbb{C}^4)$; see Proposition 3.6 in \cite{Xavier}. Since $\mathrm{supp}(v) \subset U_3$, we derive that
\begin{equation*}
    h_1 = M\mathcal{L}XNv \in H^k((0,T')\times \mathbb{R}^{2n}; \mathbb{C}^4),
    \qquad
    \text{for all } k\in \mathbb{N}.
\end{equation*}
Therefore,
\begin{equation*}
    h_1\in C^\infty((0,T')\times \mathbb{R}^{2n}) \cup L^2((0,T') ; H^k(\mathbb{R}^{2n}; \mathbb{C}^4)),
    \quad
    \partial_th_1 \in L^2((0,T') ; H^{k-1}(\mathbb{R}^{2n}; \mathbb{C}^4)).
\end{equation*}
By Theorem 3.1 in \cite{LionsMagenes}, we obtain 
\begin{equation*}
    h_1\in C((0,T'); H^{k-1}(\mathbb{R}^{2n}; \mathbb{C}^4) ).
\end{equation*}
Since $k\in\mathbb{N}$ and $T'>0$ were arbitrary, we deduce that $h_1 \in C^\infty(\mathbb{R}^{2n+1})$. Consequently, as $h_1+h_2 \in C^\infty(\mathbb{R}^{2n+1})$, it follows that $h_2 \in C^\infty(\mathbb{R}^{2n+1})$ as well. Moreover, for fixed $t>0$, $h_2(t,\cdot)$ is compactly supported. Therefore, we conclude that 
\begin{equation*}
    (F- \mathcal{A}_Xv) = h_1 + h_2 \in C((0,T'); H^k(\mathbb{R}^{2n}; \mathbb{C}^4)),
\end{equation*}
for any $k\in \mathbb{N}$ and $T'>0$. Then, Proposition \ref{cor: stability_for_aux_eq} implies that $w\in C^\infty(\mathbb{R}^{2n+1})$, and hence, $u = \chi v + w\in (I(K_\varepsilon))^4$ and $\sigma[u]$ is zero on $N^*K_\varepsilon \setminus N_\varepsilon^* K_\varepsilon$.
\end{proof}

\begin{remark}\label{Remark 1 on entanglement}
To solve the inverse problem, we will use sources of the form described in Lemma~\ref{par_main}.
Such sources cannot be factorizable. Indeed, for a factorizable source of the form $F(t,x_1,x_2)=F_1(t,x_1)F_2(t,x_2)$, the wavefront set satisfies
$\tau = |\xi_1| = |\xi_2|$.  Physically, this means that the source $F$ produces 
two entangled particles.
For the sources in Lemma~\ref{par_main}, we have $\operatorname{WF}(F)\subset N^*_\varepsilon K_\varepsilon$. On $N^*_\varepsilon K_\varepsilon$, one has $\tau = |\xi_1| + |\xi_2|$ with $|\xi_j| \ge \varepsilon > 0$.
Hence the wavefront set of any factorizable source cannot intersect $N^*_\varepsilon K_\varepsilon$.
\end{remark}

Next, we investigate the principal symbol of the solution to \eqref{auxiliary_eq}. Let $F$ be a conormal distribution satisfying the assumptions of Lemma \ref{par_main}, and suppose that $u = u^F$ solves equation \eqref{auxiliary_eq}. Then,
\begin{equation}\label{det_system}
    \begin{cases}
        D u_0 = g\sqrt{\rho_2} u_1 + g \sqrt{\rho_1} u_2 + F, \\
        (D_1 - \Omega)u_1 = g\sqrt{\rho_2} u_0 + g \sqrt{\rho_1} u_3, \\
        (D_2 - \Omega)u_2 = g\sqrt{\rho_1} u_0 - g \sqrt{\rho_2} u_3, \\
        (D_3 - 2\Omega)u_3 = g\sqrt{\rho_1} u_1 - g \sqrt{\rho_2} u_2.
    \end{cases}
\end{equation}
Since $\sigma[D_3]$ does not vanish on $N^*K_\varepsilon$, there exists a parametrix $E_3$ for the operator $D_3 - 2\Omega$, a pseudodifferential operator of order $-1$. In particular,
\begin{equation*}
    E_3(D_3 - 2\Omega) v = v + R_3 v,
    \qquad
    \text{for } v\in I(K_\varepsilon),
\end{equation*}
for some smoothing operator $R_3$. By Lemma \ref{par_main}, $u \in (I(K_\varepsilon))^4$. Hence, if we apply the operator $E_3$ to both sides of the last equation of the system \eqref{det_system}, we obtain
\begin{equation*}
    u_3 = E_3 \left(g\sqrt{\rho_1} u_1 - g\sqrt{\rho_2}u_2\right) - R_3 u_3.
\end{equation*}
We will consider the upcoming equations modulo smooth functions. Therefore, we denote by $R$ a smooth function that may vary from line to line. Since $D_3$ involves derivative with respect to $t$ variable only, we can rewrite the above identity as follows 
\begin{equation*}
    u_3 =  \left(g\sqrt{\rho_1} E_3 u_1 - g\sqrt{\rho_2} E_3 u_2\right) + R.
\end{equation*}
If we insert the last expression for $u_3$ into the second and third equations of the system \eqref{det_system}, we obtain 
\begin{align}\label{det_u_1}
    &(D_1 - \Omega - g^2\rho_1 E_3)u_1 = g\sqrt{\rho_2} u_0 - g^2\sqrt{\rho_1\rho_2}E_3u_2 + R,\\
    &\nonumber (D_2 - \Omega - g^2\rho_2 E_3)u_2 = g\sqrt{\rho_1} u_0 - g^2\sqrt{\rho_1\rho_2}E_3u_1 + R.
\end{align}
Since the principal symbol of $D_2$ does not vanish on $N^*K$, there exists a parametrix $E_2$ for the operator
\begin{equation*}
    D_2 - \Omega - g^2\rho_2 E_3.
\end{equation*}
Then, the last equation implies 
\begin{equation*}
    u_2 = g \sqrt{\rho_1} E_2 u_0 - g^2 E_2(\sqrt{\rho_1\rho_2} E_3 u_1) + R.
\end{equation*}
Here, we used the properties that $\rho_1$ depends only on the $x_1$ variable and $D_2$ is a differential operator with respect to $t$ and $x_2$ variables, so that the multiplication operator $\sqrt{\rho_1}$ and $E_2$ commute. We insert the above expression for $u_2$ into \eqref{det_u_1} to obtain 
\begin{equation*}
    \left(D_1 - \Omega - g^2\rho_1 E_3 - g^4 \sqrt{\rho_1\rho_2} E_3E_2(\sqrt{\rho_1\rho_2} E_3\cdot)  \right)u_1 = g\sqrt{\rho_2} u_0 - g^3\rho_1 \sqrt{\rho_2} E_3 E_2 u_0  + R.
\end{equation*}
Likewise, because the principal symbol of $D_1$ remains non-vanishing on $N^*K_\varepsilon$, we obtain a parametrix $E_1$ and hence
\begin{equation*}
    u_1 = g\sqrt{\rho_2} E_1 u_0 - g^3E_1(\rho_1 \sqrt{\rho_2} E_3 E_2 u_0)  + R.
\end{equation*}
Similarly, we derive
\begin{equation*}
    u_2 = g\sqrt{\rho_1} E_2' u_0 - g^3E_2'(\sqrt{\rho_1} \rho_2  E_3 E_1' u_0)  + R,
\end{equation*}
where $E_1'$ and $E_2'$ are parametrices for the following operators
\begin{equation*}
    D_1 - \Omega - g^2\rho_1 E_3,
    \qquad
    D_2 - \Omega - g^2\rho_2 E_3 - g^4 \sqrt{\rho_1\rho_2} E_3E_1'(\sqrt{\rho_1\rho_2} E_3\cdot),
\end{equation*}
respectively. Substituting the expressions for $u_1$ and $u_2$ into the first equation of the system \eqref{det_system}, we obtain
\begin{equation*}
Du_0 = Qu_0 + F + R,
\end{equation*}
where $Q$ is given by
\begin{equation}\label{det_Q}
Q = g^2(\rho_2E_1 + \rho_1E_2') - g^4 E_1(\rho_1 \rho_2 E_3 E_2 \cdot) - g^4 E_2'(\rho_1 \rho_2 E_3 E_1' \cdot).
\end{equation}
Assume that $u_0^{in}$ solves 
\begin{equation}\label{u_in}
\begin{cases}
    Du_0^{in} = F,\\
    u_0^{in}\arrowvert_{t<0}=0.
\end{cases}
\end{equation}
We define 
\begin{equation}\label{u_sc}
    u_0^{sc} = u_0 - u_0^{in}.
\end{equation}
Then,
\begin{equation*}
    (D - Q) u_0^{sc} = Qu_0^{in} + R.
\end{equation*}
Consider the bicharacteristic curve which goes through $(0, 0,0,\tau^*,\xi_1^*,\xi_2^*) \in \mathrm{Char}(D_{X})$:
\begin{align*}
    \beta(s) = \left(s,  x_1(s), x_2(s), \tau^*, \xi_1^*,\xi_2^* \right),
\end{align*}
where
\begin{equation*}
    x_1(s) = -\frac{\xi_1^*}{|\xi_1^*|}s,
    \qquad
    x_2(s) = -\frac{\xi_2^*}{|\xi_2^*|}s.
\end{equation*}
Then, by \cite[Th. 25.2.4]{hormander1985},
\begin{equation}\label{eq:LuQu}
    i^{-1} \mathcal{L}_H \sigma[u_0^{sc}](\beta(s)) = \sigma[Q](\beta(s)) \sigma[u_0^{in}](\beta(s)).
\end{equation}
Let $\omega$ be a half-density and suppose $\omega > 0$. Then, there is a smooth function $a$ such that $\sigma[u_0^{sc}] = a\omega$. Let $\alpha = a\circ \beta$. Then,
\begin{equation*}
    (\omega^{-1}\mathcal{L}_H (a\omega) )\circ \beta = e^{ - \vartheta} \partial_s (e^{\vartheta}\alpha)
\end{equation*}
where 
\begin{equation*}
    \vartheta(s) = \int_0^s \omega^{-1} (\beta(r)) \mathcal{L}_H \omega(\beta(r)) dr.
\end{equation*}
By combining this with \eqref{eq:LuQu}, we obtain 
\begin{equation*}
    \partial_s (e^{\vartheta}\alpha) = i e^\vartheta (\omega^{-1}\circ\beta)  (\sigma[Q]\circ\beta) (\sigma[u_0^{in}]\circ\beta).
\end{equation*}
By integrating this over $(0,t)$, we find that
\begin{equation*}
    \sigma[u_0^{sc}](\beta(t)) = i \omega(\beta(t)) e^{-\vartheta(t)} \int_{0}^t  \omega^{-1}(\beta(s)) e^{\vartheta(s)} \sigma[Q](\beta(s)) \sigma[u_0^{in}](\beta(s)) ds. 
\end{equation*}
Recalling the definition of $Q$, given by \eqref{det_Q}, we write
\begin{equation*}
    \sigma[Q] (\beta (t)) = g^2 \frac{\rho(x_1(s))}{\tau^* - |\xi_2^*|} + g^2 \frac{\rho(x_2(s))}{\tau^* - |\xi_1^*|}.
\end{equation*}
Therefore, 
\begin{multline}\label{sigma_u_sc}
    \sigma [u_0^{sc}] (\beta(t)) \\
    = i g^2\omega(\beta(t)) e^{-\vartheta(t)} \int_{0}^t \omega^{-1}(\beta(s)) e^{\vartheta(s)} \left( \frac{\rho(x_1(s))}{\tau^* - |\xi_2^*|} +  \frac{\rho(x_2(s))}{\tau^* - |\xi_1^*|} \right) \sigma[u_0^{in}](\beta(s)) ds.
\end{multline}
By repeating these steps except for $u_0^{in}$, we obtain
\begin{equation}\label{princ_symb_u_in}
    \sigma[u_0^{in}](\beta(t)) = i \omega(\beta(t)) e^{-\vartheta(t)} \int_{0}^t  \omega^{-1}(\beta(s)) e^{\vartheta(s)} \sigma[F](\beta(s))ds. 
\end{equation}

We introduce polar coordinates for $x_k$ as  
\begin{equation*}
  x_k = r_k \omega_k, \quad r_k > 0, \quad \omega_k \in \mathbb{S}^{n-1}.  
\end{equation*}
Next, we perform a change of variables given by  
\begin{equation}\label{change_variables}
    (t, x_1, x_2) \mapsto (t, r_1, \omega_1, r_2, \omega_2) \mapsto (t, s_1, \omega_1, s_2, \omega_2),
\end{equation}
where we define \( s_k = r_k - t \). Under this transformation, the phase space coordinates transform as  
\begin{equation*}
    (t, x_1, x_2, \tau, \xi_1, \xi_2) \mapsto (t, s_1, \omega_1, s_2, \omega_2, \eta, \sigma_1, \xi_{\omega_1}, \sigma_2, \xi_{\omega_2}),
\end{equation*}
with the new variables given by  
\begin{equation*}
    \omega_k = \frac{x_k}{|x_k|}, \qquad
\sigma_k = \xi_k \cdot \omega_k, \qquad
\xi_{\omega_k} = \xi_k - (\xi_k\cdot \omega_k) \omega_k,
\end{equation*}
\begin{equation*}
    \eta = \tau + \xi_1\cdot \omega_1 + \xi_2\cdot \omega_2.
\end{equation*}
In these coordinates, the bicharacteristic curve passing through  $(0, 0, 0, \tau, \xi_1, \xi_2)$ takes the form  
\begin{equation*}
\beta(t) = \left( t, 0, \omega_1, 0, \omega_2, 0, \sigma_1, 0, \sigma_2, 0 \right),
\end{equation*}
where $\sigma_k = - |\xi_k|$ and $\omega_k = - \xi_k/|\xi_k|$. Finally, in these coordinates, we write  
\begin{align*}
N_\varepsilon^*K_\varepsilon = \biggl\{ (t, 0, \omega_1, 0, \omega_2, 0, -\sigma_1, 0, -\sigma_2, 0) \mid 
t > \varepsilon, \ (\omega_1, \omega_2) \in \mathbb{S}^{n-1} \times \mathbb{S}^{n-1}, \ \sigma_1, \sigma_2 > \varepsilon \biggr\}.
\end{align*}

Rewriting the expressions \eqref{sigma_u_sc} and \eqref{princ_symb_u_in} in the new variables, we obtain the following result.
\begin{lemma}\label{l: Pr_symb_u_in_sc}
    Let $u_0^{in}$ and $u_0^{sc}$ be the functions given by \eqref{u_in} and \eqref{u_sc}, respectively. Let $\beta$ be the bicharacteristic curve passing through $\left( t, 0, \omega_1, 0, \omega_2, 0, \sigma_1, 0, \sigma_2, 0 \right)$. Then,
    \begin{multline*}
        \sigma [u_0^{sc}] (t, 0, \omega_1, 0, \omega_2, 0, \sigma_1, 0, \sigma_2, 0) \\
        =ig^2 V(t,\omega_1,\omega_2) \int_0^t \frac{1}{V(s, \omega_1,\omega_2)} \left( \frac{\rho(s\omega_1)}{-\sigma_1} +  \frac{\rho(s\omega_2)}{-\sigma_2} \right) \sigma[u_0^{in}](\beta(s))ds
    \end{multline*}
    and
    \begin{equation*}
        \sigma [u_0^{in}] (t, 0, \omega_1, 0, \omega_2, 0, \sigma_1, 0, \sigma_2, 0) 
        = iV(t,\omega_1,\omega_2) \int_0^t \frac{1}{V(s, \omega_1,\omega_2)}  \sigma[F](\beta(s)) ds,
    \end{equation*}
    where 
    \begin{equation}\label{A}
        V(t,\omega_1,\omega_2) =  \omega(\beta(t)) e^{-\vartheta(t)}. 
    \end{equation}
\end{lemma}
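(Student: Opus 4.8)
The plan is to obtain Lemma~\ref{l: Pr_symb_u_in_sc} by simply rewriting the transport formulas \eqref{sigma_u_sc} and \eqref{princ_symb_u_in}, which have already been derived, in the coordinates \eqref{change_variables}. Three points must be verified: (a) the bicharacteristic of $D_X$ through the origin collapses to the normal form $\beta(t)=(t,0,\omega_1,0,\omega_2,0,\sigma_1,0,\sigma_2,0)$; (b) the coefficients multiplying $\rho$ in \eqref{sigma_u_sc} become $\rho(s\omega_1)/(-\sigma_1)$ and $\rho(s\omega_2)/(-\sigma_2)$; and (c) the remaining transport data depend only on $(t,\omega_1,\omega_2)$, so that they can be collected into the single function $V$ of \eqref{A}.

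For (a): the bicharacteristic of $D_X$ through $(0,0,0,\tau^*,\xi_1^*,\xi_2^*)\in\mathrm{Char}(D_X)$ is $\beta(s)=(s,x_1(s),x_2(s),\tau^*,\xi_1^*,\xi_2^*)$ with $x_k(s)=-\tfrac{\xi_k^*}{|\xi_k^*|}s=s\,\omega_k$, where $\omega_k=-\xi_k^*/|\xi_k^*|$. For $s>0$ the polar radius of $x_k(s)$ equals $s$, so the shifted radial variable is $s_k=r_k-t=0$; the angular variable is the constant $\omega_k$; since $\xi_k^*$ is parallel to $\omega_k$ we get $\sigma_k=\xi_k^*\cdot\omega_k=-|\xi_k^*|$ and $\xi_{\omega_k}=\xi_k^*-(\xi_k^*\cdot\omega_k)\omega_k=0$; and $\eta=\tau^*+\xi_1^*\cdot\omega_1+\xi_2^*\cdot\omega_2=\tau^*-|\xi_1^*|-|\xi_2^*|=0$ because $\beta$ lies in $\mathrm{Char}(D_X)$. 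This is the asserted normal form, and it also identifies $N_\varepsilon^*K_\varepsilon$ as written, with $\sigma_k=-|\xi_k^*|<-\varepsilon$. For (b): on $\mathrm{Char}(D_X)$ one has $\tau^*=|\xi_1^*|+|\xi_2^*|$, hence $\tau^*-|\xi_2^*|=|\xi_1^*|=-\sigma_1$ and $\tau^*-|\xi_1^*|=|\xi_2^*|=-\sigma_2$; combined with $x_1(s)=s\omega_1$, $x_2(s)=s\omega_2$ this turns the bracketed factor in \eqref{sigma_u_sc} into $\rho(s\omega_1)/(-\sigma_1)+\rho(s\omega_2)/(-\sigma_2)$.

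For (c): once the direction $(\omega_1,\omega_2)\in\mathbb{S}^{n-1}\times\mathbb{S}^{n-1}$ is fixed, the bicharacteristic through the origin is uniquely determined, so the half-density value $\omega(\beta(s))$ and the integrating factor $\vartheta(s)=\int_0^s\omega^{-1}(\beta(r))\mathcal L_H\omega(\beta(r))\,dr$ are functions of $(s,\omega_1,\omega_2)$ only. Therefore $V(t,\omega_1,\omega_2)=i\,\omega(\beta(t))e^{-\vartheta(t)}$ is well defined, the leading factor of \eqref{sigma_u_sc} and \eqref{princ_symb_u_in} is exactly $V(t,\omega_1,\omega_2)$ (up to the constant $g^2$ in the first case), and the integrand factor $\omega^{-1}(\beta(s))e^{\vartheta(s)}$ is expressed through $1/V(s,\omega_1,\omega_2)$; substituting and simplifying gives the two displayed identities. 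The computation is essentially bookkeeping, and the only step that genuinely needs care is (a): tracking how a phase-space point transforms under the singular polar change of coordinates \eqref{change_variables} and confirming that the bicharacteristic really collapses to the normal form, since that is where the structure of $N_\varepsilon^*K_\varepsilon$ and the dependence of the transport data on $(s,\omega_1,\omega_2)$ alone are actually used.
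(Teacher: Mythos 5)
Your proposal is correct and is exactly the paper's argument: the paper offers no separate proof of this lemma beyond the sentence ``Rewriting the expressions \eqref{sigma_u_sc} and \eqref{princ_symb_u_in} in the new variables, we obtain the following result,'' and your steps (a)--(c) are precisely the bookkeeping that sentence suppresses. The only residual wrinkle --- that the overall factor of $i$ in \eqref{sigma_u_sc} and \eqref{princ_symb_u_in} does not quite cancel against the ratio $V(t)/V(s)$ with $V$ as defined in \eqref{A} --- is a constant-factor inconsistency already present in the paper, not something your argument introduces.
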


Next, we introduce some notation. For functions $u$ and $v$ for which the following integrals are well-defined, we set
\begin{align}\label{def_form}
    m(u, v)(t, x_1) &= \int_{\mathbb{R}^n} \chi(x_2)\, u(t, x_1, x_2)\, \overline{v(t, x_1, x_2)}\, dx_2, \\
    \nonumber m(u)(t, x_1) &= m(u, u)(t, x_1).
\end{align}
Furthermore, we define the submanifold $K_\varepsilon^1 \subset \mathbb{R}^{n+1}$ in the coordinates given by~\eqref{change_variables} as
\begin{equation*}
    K_\varepsilon^1 = \{ (t, s_1, \omega_1) :\; t > \varepsilon \text{ and } s_1 = 0 \}.
\end{equation*}

\begin{lemma}\label{distr_R3}
    Let $u\in I(K_\varepsilon)$ and $v\in C^\infty(\mathbb{R}^{2n+1})$, then $m(u,v) \in I(K_\varepsilon^1)$. Moreover, in the coordinates given by \eqref{change_variables}, 
    \begin{multline*}
        \sigma[m(u,v)](t,\omega_1;\sigma_1)\\
        = \int_{\mathbb{R}} \int_{\mathbb{S}^{n-1}} e^{-i\sigma_2 t} [\mathcal{F}^{-1} \eta] (\sigma_2, \omega_2)  \overline{v(t, 0, \omega_1, 0, \omega_2)} \sigma[u](t, \omega_1, \omega_2; \sigma_1, \sigma_2)  d\omega_2d\sigma_2,
    \end{multline*}
    where the function $\eta$ is defined as $\eta(s,\omega) = s\chi(s,\omega)$ and $\mathcal{F} = \mathcal{F}_{\sigma_2\mapsto s_2}$ denotes the Fourier transform. 
\end{lemma}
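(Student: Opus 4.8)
The plan is to insert the local oscillatory-integral representation of the conormal distribution $u$, carry out the $x_2$-integration in \eqref{def_form} explicitly after passing to polar coordinates in $x_2$, and recognize the output as a conormal distribution associated with $K_\varepsilon^1$. Conceptually, $m(u,v)$ is the pushforward of $\chi(x_2)\,u\,\overline v$ (a conormal distribution associated with $K_\varepsilon$, with compact $x_2$-support) under the projection $(t,x_1,x_2)\mapsto(t,x_1)$, whose fibre over $K_\varepsilon^1$ is the sphere $\{|x_2|=t\}$; but since the cutoff $\chi(x_2)$ localizes away from this fibre for most values of $t$, it is cleaner to compute by hand than to invoke a general pushforward calculus, and doing so yields the explicit symbol formula.

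First I would localize with a partition of unity on $K_\varepsilon$ and use the coordinates \eqref{change_variables}, in which $K_\varepsilon=\{s_1=0,\ s_2=0\}$; there $u$ is, modulo $C^\infty$,
\begin{equation*}
u(t,s_1,\omega_1,s_2,\omega_2)=\int_{\mathbb{R}^2} e^{i(s_1\sigma_1+s_2\sigma_2)}\,\widehat u(t,\omega_1,\omega_2,\sigma_1,\sigma_2)\,d\sigma_1\,d\sigma_2,
\end{equation*}
where the amplitude $\widehat u$, the partial Fourier transform of $u$ in $(s_1,s_2)$, is a symbol of some order $m$ in $(\sigma_1,\sigma_2)$, smooth in $(t,\omega_1,\omega_2)$, with leading part $\sigma[u]$. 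Substituting this into \eqref{def_form}, changing to polar coordinates $x_2=r_2\omega_2$ with $s_2=r_2-t$ (so $dx_2=r_2^{\,n-1}\,dr_2\,d\omega_2$), and writing $e^{is_2\sigma_2}=e^{-it\sigma_2}e^{ir_2\sigma_2}$, I obtain, modulo $C^\infty$,
\begin{equation*}
m(u,v)=\int_{\mathbb{R}} e^{is_1\sigma_1}\,B(t,s_1,\omega_1,\sigma_1)\,d\sigma_1,
\end{equation*}
where $B$ is the integral over $(\sigma_2,\omega_2)\in\mathbb{R}\times\mathbb{S}^{n-1}$ of
\begin{equation*}
e^{-it\sigma_2}\Big(\int_{\mathbb{R}} e^{ir_2\sigma_2}\,r_2^{\,n-1}\chi(r_2\omega_2)\,\overline{v\big(t,(s_1+t)\omega_1,r_2\omega_2\big)}\,dr_2\Big)\,\widehat u(t,\omega_1,\omega_2,\sigma_1,\sigma_2).
\end{equation*}
Since $\operatorname{supp}\chi$ is compact, the inner $r_2$-integral is the Fourier transform of a smooth compactly supported function of $r_2$, hence Schwartz (in fact of Paley-Wiener type) in $\sigma_2$, and the factor $e^{-it\sigma_2}$ does not affect this decay.

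Then I would check the two assertions. That $m(u,v)\in I(K_\varepsilon^1)$ follows from the standard fact that a symbol in $(\sigma_1,\sigma_2)$ jointly, integrated against a rapidly decaying function of $\sigma_2$, gives a symbol in $\sigma_1$ of the same order: $\int \langle(\sigma_1,\sigma_2)\rangle^{m}\langle\sigma_2\rangle^{-N}\,d\sigma_2\lesssim\langle\sigma_1\rangle^{m}$ for $N$ large, and similarly after differentiation, so $B$ is a symbol of order $m$ in $\sigma_1$, locally uniformly in $(t,s_1,\omega_1)$ together with all base-variable derivatives; hence the displayed formula exhibits $m(u,v)$ modulo $C^\infty$ as a conormal distribution associated with $\{s_1=0\}=K_\varepsilon^1$, the residual $s_1$-dependence of the amplitude being removed by the usual reduction (Taylor expansion in $s_1$ followed by integration by parts in $\sigma_1$, each step lowering the order). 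The principal symbol is then $B|_{s_1=0}$ with $\widehat u$ replaced by its leading part $\sigma[u]$ and $x_1$ set to $t\omega_1$; it then remains to replace $\overline{v(t,(s_1+t)\omega_1,r_2\omega_2)}$ by its value on $K_\varepsilon$, namely $\overline{v(t,0,\omega_1,0,\omega_2)}$. The difference equals $s_2$ times a smooth function that still carries the factor $\chi(r_2\omega_2)$, so using $s_2 e^{is_2\sigma_2}=-i\partial_{\sigma_2}e^{is_2\sigma_2}$ and integrating by parts in $\sigma_2$, the corresponding correction acquires the amplitude $\partial_{\sigma_2}\widehat u\in S^{m-1}$ and contributes only to lower order. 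Crucially, the factor $r_2^{\,n-1}\chi(r_2\omega_2)$ cannot be pulled out of the $r_2$-integral by the same device, because that would destroy the compact support in $r_2$; this is exactly why $\chi$ survives inside a Fourier transform while $v$ does not. Tracking the polar Jacobian $r_2^{\,n-1}$ together with the half-density normalization of the conormal symbols then identifies the radial weight as $\eta$, while the prefactor $e^{-it\sigma_2}$ persists, giving the asserted formula.

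I expect the main obstacle to lie in this last step: justifying carefully that the $\sigma_2$-integration produces a symbol in $\sigma_1$ of exactly the right order, uniformly together with all base-variable derivatives (so that $m(u,v)$ truly belongs to $I(K_\varepsilon^1)$), matching the Jacobian and half-density factors so that the radial weight comes out exactly as $\eta$, and confirming that both the ``pull $v$ onto $K_\varepsilon$'' step and the removal of the $s_1$-dependence affect only lower-order terms of the principal symbol.
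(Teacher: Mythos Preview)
Your approach is essentially the same as the paper's: write $u$ as an oscillatory integral in the coordinates \eqref{change_variables}, carry out the $x_2$-integration (which becomes a Fourier transform in $s_2$ of a compactly supported function, hence rapidly decaying in $\sigma_2$), and verify that the resulting amplitude is a symbol in $\sigma_1$ via a Peetre-type estimate. The one streamlining the paper performs that you do not is to invoke at the outset the standard fact that multiplying a conormal distribution by a smooth function keeps it conormal with principal symbol multiplied by the restriction of that function to the submanifold: since $u\overline v\in I(K_\varepsilon)$ with $\sigma[u\overline v]=\overline v|_{K_\varepsilon}\,\sigma[u]$, the paper can replace $\overline v$ by $\overline{v(t,0,\omega_1,0,\omega_2)}$ before any computation, so that the amplitude $b$ already has no $s_1$-dependence and no need arises for your separate ``pull $v$ onto $K_\varepsilon$'' and ``remove $s_1$-dependence'' steps. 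Your version reaches the same conclusion by doing these reductions by hand afterwards, which is correct but longer; the paper's shortcut is worth knowing.
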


\begin{proof}
Since $v \in C^\infty(\mathbb{R}^{2n+1})$, it follows from the definition of conormal distributions that $u\overline{v} \in I(K_\varepsilon)$ and that $\sigma[u\overline{v}] = \overline{v}\sigma[u]$, and hence, 
\begin{multline*}
        \overline{v(t,0,\omega_1, 0,\omega_2)}u(t, s_1, \omega_1, s_2, \omega_2) \\ 
        = \int_{\mathbb{R}^2} e^{i (\sigma_1 s_1 + \sigma_2 s_2)} \overline{v(t,0,\omega_1, 0,\omega_2)}\sigma[u](t, \omega_1, \omega_2; \sigma_1, \sigma_2) d\sigma_1 d\sigma_2.
\end{multline*}
Therefore, in the coordinates \eqref{change_variables}, we express $m(u,v)$ as  
\begin{align*}
    m(u,v)(t,s_1,\omega_1) & = \int_{\mathbb{R}} \int_{\mathbb{S}^{n-1}} (s_2 + t)\chi(s_2 + t, \omega_2) \overline{v(t,0,\omega_1, 0,\omega_2)}u(t,s_1,\omega_1, s_2,\omega_2) ds_2 d\omega_2\\
    & = \int_{\mathbb{R}} e^{i\sigma_1s_1} b(t,\omega_1, \sigma_1) d\sigma_1,
\end{align*}
where 
\begin{equation*}
    b(t,\omega_1, \sigma_1) = \int_{\mathbb{R}} \int_{\mathbb{S}^{n-1}} e^{-i\sigma_2 t} [\mathcal{F}^{-1} \eta] (\sigma_2, \omega_2)  \overline{v(t,0,\omega_1, 0,\omega_2)} \sigma[u](t, \omega_1, \omega_2; \sigma_1, \sigma_2)  d\omega_2d\sigma_2.
\end{equation*}
To show that $m(u,v)$ is a conormal distribution, it remains to show that $b$ is a symbol. 

Let $\alpha$, $\beta$ be multi-indexes, then $D_{t,\omega_1}^\alpha D_{\sigma_1}^\beta b$ is a linear combination of the following terms 
\begin{equation*}
    b_{l,\tau,\beta} (t,\omega_1,\sigma_1) = \int_{\mathbb{R}} \int_{\mathbb{S}^{n-1}} \sigma_2^l e^{-i\sigma_2 t} [\mathcal{F}^{-1} \eta] (\sigma_2, \omega_2)  D_{t,\omega_1}^\tau D_{\sigma_1}^\beta \sigma[u](t, \omega_1, \omega_2; \sigma_1, \sigma_2)  d\omega_2d\sigma_2,
\end{equation*}
where $0\leq l\leq |\alpha|$ and $\tau\leq \alpha$. By the definition of a symbol, for any compact set $K\subset \mathbb{R}\times \mathbb{S}^{n-1}$, there is a constant $C_{\alpha,\beta, K}> 0$ such that 
\begin{equation*}
    D_{t,\omega_1}^\tau D_{\sigma_1}^\beta \sigma[u](t, \omega_1, \omega_2; \sigma_1, \sigma_2) \leq C_{\alpha,\beta, K} (1 + |(\sigma_1,\sigma_2)|)^{m - |\beta|},
\end{equation*}
for $(t,\omega_1) \in K$, $\omega_2\in \mathbb{S}^{n-1}$, and $\sigma_1$, $\sigma_2\in \mathbb{R}$, where $m$ is the order of $\sigma[u]$. Peetre's inequality, Lemma 1.18 in \cite{Xavier}, implies
\begin{equation*}
    (1 + |(\sigma_1,\sigma_2)|)^{m - |\beta|} \leq C_{m,\beta} (1 + |\sigma_1|)^{m - |\beta|} (1 + |\sigma_2|)^{|m - |\beta||}.
\end{equation*}
Therefore, 
\begin{multline*}
    |b_{l,\tau,\beta} (t,\omega_1,\sigma_1)|\\
    \leq C_{\alpha,\beta,K} (1 + |\sigma_1|)^{m - |\beta|} \int_{\mathbb{R}} \int_{\mathbb{S}^{n-1}} |\sigma_2|^l |[\mathcal{F}^{-1} \eta] (\sigma_2, \omega_2)|  (1 + |\sigma_2|)^{|m - |\beta||}  d\omega_2d\sigma_2.
\end{multline*}
Since $\chi$ is compactly supported, $\eta$ decays in $\sigma_2$ faster than any polynomial. Hence,
\begin{equation*}
    |b_{l,\tau,\beta} (t,\omega_1,\sigma_1)| \leq C_{\alpha,\beta,Q} (1 + |\sigma_1|)^{m - |\beta|}.
\end{equation*}
This implies that $b$ is a symbol and $m(u,v) \in I(K_\varepsilon^1)$.
\end{proof}

\section{Inverse problem}
In this section, we resolve the inverse problem. We begin with some preliminary results. 

\begin{lemma}\label{polarization}
    Let $f$, $h\in\mathcal{C}_{\operatorname{sym}}(S)$ and $u^f$, $u^h$ be the corresponding solutions of \eqref{main_eq}, respectively. Then, the measurement map $\Lambda$ determines $m(u_0^f,u_0^h)$ on $(0,\infty)\times W_1$, where $m(\cdot,\cdot)$ is defined in \eqref{def_form}.
\end{lemma}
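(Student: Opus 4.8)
The plan is to recover $m(u_0^f,u_0^h)$ from the knowledge of $\Lambda$ by combining the linearity of the source-to-solution map with a polarization identity for the Hermitian form $m(\cdot,\cdot)$ defined in \eqref{def_form}. First I would record that the map $f\mapsto u^f$ is complex-linear. By Proposition~\ref{l:direct_prob_aux} (see also Corollary~\ref{cor: stability_for_aux_eq}), for every $f\in\mathcal{C}_{\mathrm{sym}}(S)$ the problem \eqref{main_eq} has a \emph{unique} solution $u^f$; since $\mathcal{A}$ is linear, for any scalars $\alpha,\beta$ the function $\alpha u^f+\beta u^h$ solves \eqref{main_eq} with source $\alpha f+\beta h$ and vanishes for $t<0$, so uniqueness forces $u^{\alpha f+\beta h}=\alpha u^f+\beta u^h$, and in particular $u_0^{\alpha f+\beta h}=\alpha u_0^f+\beta u_0^h$. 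I would also note that $\mathcal{C}_{\mathrm{sym}}(S)$ is closed under the complex combinations that will be needed: for $f,h\in\mathcal{C}_{\mathrm{sym}}(S)$ one has $f+i^k h\in\mathcal{C}_{\mathrm{sym}}(S)$ for $k=0,1,2,3$, because the symmetry $h_0(t,x_1,x_2)=h_0(t,x_2,x_1)$ is preserved under complex linear combinations and the remaining components stay zero.

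Next, since $\operatorname{supp}(\chi)=W_2$ and $\chi\ge 0$, for every $g\in\mathcal{C}_{\mathrm{sym}}(S)$ and every $(t,x_1)\in(0,\infty)\times W_1$ we have
\[
\Lambda g(t,x_1)=\int_{W_2}\chi(x_2)\,|u_0^g(t,x_1,x_2)|^2\,dx_2=\int_{\mathbb{R}^n}\chi(x_2)\,u_0^g(t,x_1,x_2)\,\overline{u_0^g(t,x_1,x_2)}\,dx_2=m(u_0^g)(t,x_1),
\]
the integrals converging because, by the direct problem of Section~\ref{Sec:direct_problem} (e.g.\ Lemma~\ref{direct_problem_est}), each $u_0^g(t,\cdot)$ is smooth while $\chi$ is compactly supported. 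Thus the available data is exactly the quadratic form $g\mapsto m(u_0^g)$ on $\mathcal{C}_{\mathrm{sym}}(S)$.

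Finally I would apply the polarization identity for the sesquilinear form $m$ (linear in the first slot, conjugate-linear in the second, with $m(v,u)=\overline{m(u,v)}$): using the linearity of the source-to-solution map and then the definition of $\Lambda$,
\[
m(u_0^f,u_0^h)=\frac14\sum_{k=0}^3 i^k\,m\big(u_0^f+i^k u_0^h\big)=\frac14\sum_{k=0}^3 i^k\,m\big(u_0^{\,f+i^k h}\big)=\frac14\sum_{k=0}^3 i^k\,\Lambda\big(f+i^k h\big),
\]
where each source $f+i^k h$ belongs to $\mathcal{C}_{\mathrm{sym}}(S)$, so each term on the right is part of the measured data. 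Restricting to $(0,\infty)\times W_1$ then yields $m(u_0^f,u_0^h)$ there, which is the claim.

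I do not expect a genuine obstacle: every step is either the uniqueness statement for \eqref{main_eq} established in Section~\ref{Sec:direct_problem} or an elementary algebraic identity. The only points deserving a little care are the justification of linearity, which rests solely on that uniqueness, and the remark that $\mathcal{C}_{\mathrm{sym}}(S)$ is stable under the complex combinations $f+i^k h$; this last point is precisely what allows one to capture the full complex value of $m(u_0^f,u_0^h)$, whereas real combinations of sources alone would only recover its real part.
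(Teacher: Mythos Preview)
Your proof is correct and follows essentially the same route as the paper: both use the polarization identity for the sesquilinear form $m(\cdot,\cdot)$ together with linearity of the source-to-solution map $f\mapsto u^f$ to express $m(u_0^f,u_0^h)$ as a combination of values $\Lambda(f+i^k h)$. Your write-up is in fact more careful than the paper's, explicitly justifying the linearity via uniqueness and the closure of $\mathcal{C}_{\mathrm{sym}}(S)$ under the needed complex combinations.
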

\begin{proof}
    Observe that 
    \begin{equation*}
        m(u_0^f, u_0^h) = \frac{1}{4} \left( 
        m(u_0^f + u_0^h) - m(u_0^f - u_0^h) +
        i\, m(u_0^f + iu_0^h) - i\, m(u_0^f - iu_0^h)
        \right).
    \end{equation*}
    For any $f\in\mathcal{C}_{\operatorname{sym}}(S)$, we know that $m(u_0^f) = \Lambda f$. Hence, since \eqref{main_eq} is linear, the map $\Lambda$ determines each term on the right-hand side, and hence, determines $ m(u_0^f, u_0^h)$.
\end{proof}

Let $k \in \mathbb{N}$ be sufficiently large and let $\varepsilon > 0$ be sufficiently small. Assume that $F \in H^k(\mathbb{R} \times \mathbb{R}^{2n})$ satisfies conditions of Lemma \ref{par_main} with $T = 2\varepsilon$. Then, by Proposition \ref{cor: stability_for_aux_eq} and Lemma \ref{par_main}, there exists a unique solution $u=u^F$ such that $u \in \left( I(K_\varepsilon) \right)^4$ and $u \in C(\mathbb{R}; H^k(\mathbb{R}^{2n}; \mathbb{C}^4))$.

Let $\mathcal{A}_0$ be the operator $\mathcal{A}$ but with $\rho = 0$. Proposition \ref{cor: stability_for_aux_eq} implies that there exists unique solution $w \in C(\mathbb{R}; H^k(\mathbb{R}^{2n}; \mathbb{C}^4))$ of the equation
\begin{equation}\label{eq:for_omega}
\begin{cases}
\mathcal{A}_0 w = F,\\
w\arrowvert_{t <0} = 0.
\end{cases}
\end{equation}
Moreover, by Lemma~\ref{par_main}, $w \in \left( I(K_\varepsilon) \right)^4$. Let $\varkappa \in C_0^\infty(\mathbb{R}^{2n})$ be a function such that $\varkappa = 1$ in $B_{4\varepsilon}(0)$, and let $u'$ be the solution of
\begin{equation}\label{shifted_Av_eq_0}
\begin{cases}
\mathcal{A} u' = 0 & \text{for } t > 3\varepsilon,\\
u'\arrowvert_{t = 3\varepsilon} = \varkappa w\arrowvert_{t = 3\varepsilon}.
\end{cases}
\end{equation}

Next, we show that $u - u'$ is smooth for $t>3\varepsilon$.

\begin{lemma}\label{l:dif_smooth}
Assume that $\Sigma$ does not contain the origin and $\varepsilon>0$ is sufficiently small. Let $F$, $u$, and $u'$ be as above. Then, $u - u' \in C^\infty((3\varepsilon, \infty) \times \mathbb{R}^{2n})$.
\end{lemma}

\begin{proof}
Since $\Sigma$ does not contain the origin, for sufficiently small $\varepsilon > 0$, it follows
\begin{equation*}
\left(K_\varepsilon \cap \{t < 4\varepsilon\}\right)  \bigcap (0, 4\varepsilon) \times 
\left( (\Sigma \times \mathbb{R}^n) \cup (\mathbb{R}^n \times \Sigma) \right) = \varnothing,
\end{equation*}
and consequently,
\begin{equation}\label{eq:sing_sup_w}
\left( \operatorname{sing\,supp}(w) \cap \{t < 4\varepsilon\} \right)
\bigcap (0, 4\varepsilon) \times 
\left( \operatorname{supp}(\rho_1) \cup \operatorname{supp}(\rho_2) \right) = \varnothing.
\end{equation}
We observe that
\begin{equation*}
\mathcal{A} - \mathcal{A}_0 =
\begin{bmatrix}
0 & g \sqrt{\rho_2} & g \sqrt{\rho_1} & 0 \\
g \sqrt{\rho_2} & 0 & 0 & g \sqrt{\rho_1} \\
g \sqrt{\rho_1} & 0 & 0 & -g \sqrt{\rho_2} \\
0 & g \sqrt{\rho_1} & -g \sqrt{\rho_2} & 0
\end{bmatrix}.
\end{equation*}
Therefore, \eqref{eq:sing_sup_w} implies that $(\mathcal{A} - \mathcal{A}_0)w$ is smooth in $(0, 4\varepsilon) \times \mathbb{R}^{2n}$.

Next, let $w'$ be the solution of the equation
\begin{equation*}
\begin{cases}
\mathcal{A} w' = 0 & \text{for } t > 3\varepsilon,\\
w' \arrowvert_{t = 3\varepsilon} = (1 - \varkappa) w \arrowvert_{t = 3\varepsilon}.
\end{cases}
\end{equation*}
By our choice of $\varkappa$ and $w$, we know that $(1 - \varkappa) w\arrowvert_{t=3\varepsilon}$ is smooth, and hence, $w'$ is also smooth.
We note
\begin{equation*}
    \begin{cases}
        \mathcal{A}(w - u' - w') = (\mathcal{A} - \mathcal{A}_0) w, & \text{for } t>3\varepsilon,\\
        (w - u' - w') \arrowvert_{t = 3\varepsilon} = 0
\end{cases}
\qquad
    \begin{cases}
        \mathcal{A}(w - u) = (\mathcal{A} - \mathcal{A}_0) w,\\
        (w - u) \arrowvert_{t < 0} = 0
\end{cases}
\end{equation*}
As we concluded previously, $(\mathcal{A} - \mathcal{A}_0)w$ is smooth in $(0, 4\varepsilon) \times \mathbb{R}^{2n}$, and hence, 
$w - u' - w'$ and $w - u$ are smooth there as well. Therefore, the difference
\begin{equation*}
u' - u = (u' + w' - w) + (w - u) - w'
\end{equation*}
is smooth in $(3\varepsilon, 4\varepsilon) \times \mathbb{R}^{2n}$. Noting that $\mathcal{A}(u' - u) = 0$, we conclude that $u' - u$ is smooth in $(3\varepsilon, \infty) \times \mathbb{R}^{2n}$.
\end{proof}

\begin{lemma}\label{l: det_aux_funct}
    Let $h \in \mathcal{C}_{\operatorname{sym}}(S)$, and let $v = v^h$ be the solution of \eqref{main_eq} with the initial data $h$. Assume that $S \setminus \Sigma$ contains the origin in $\mathbb{R}^{2n}$. Then $\Lambda$ determines
    \begin{equation*}
        v_0(t, t\omega_1, t\omega_2)
    \end{equation*}
    for all $t > 0$ and $\omega_1, \omega_2 \in \mathbb{S}^{n-1}$ such that $\omega_1 \neq \omega_2$, $t\omega_1 \in W_1$, and $s\omega_2 \in W_2$ for some $s > 0$.
\end{lemma}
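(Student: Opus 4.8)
The plan is to probe the smooth solution component $v_0$ (smooth because $h\in\mathcal{C}_{\mathrm{sym}}(S)$ is a smooth compactly supported source, Proposition~\ref{l:direct_prob_aux}) with conormal sources concentrated along bicharacteristics, and to read its values on the cone $C$ off the principal symbols of the resulting measurements, using Lemmas~\ref{par_main}, \ref{l: Pr_symb_u_in_sc} and~\ref{distr_R3}. Fix a target $t_*>\varepsilon$ and directions $\omega_1\neq\omega_2$ in $\mathbb{S}^{n-1}$ with $t_*\omega_1\in W_1$ and $s_*\omega_2\in W_2$ for some $s_*>0$. First I would choose a conormal source $f=[f_0,0,0,0]\in(I(K_\varepsilon))^4$ with $f_0$ symmetric in $(x_1,x_2)$ and supported in $(0,\infty)\times S$ --- possible since $0\in S$, so the cone $C$ enters $S$ at small times and the singular support of $f$ can be placed along $C$ at times just above $\varepsilon$ --- satisfying the hypotheses of Lemma~\ref{par_main}, with $\sigma[f]$ microsupported in an arbitrarily small conic neighborhood, inside $N_\varepsilon^*K_\varepsilon$, of the covector over $(t_*,t_*\omega_1,t_*\omega_2)$; in the coordinates~\eqref{change_variables} this means $\sigma[f]$ is localized near $\omega_1'=\omega_1$ and $\omega_2'=\omega_2$. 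Because $\omega_1\neq\omega_2$, the symmetrization contributes only a piece microsupported near the covector over $(t_*,t_*\omega_2,t_*\omega_1)$, harmless near the target.

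By Lemma~\ref{par_main}, $u_0^f\in I(K_\varepsilon)$; by Lemma~\ref{l: Pr_symb_u_in_sc}, the decomposition $u_0^f=u_0^{in}+u_0^{sc}$ of~\eqref{u_in}--\eqref{u_sc} has $u_0^{sc}$ of strictly lower order (the operator $Q$ of~\eqref{det_Q} has order $-1$), so $\sigma[u_0^f]=\sigma[u_0^{in}]$ equals the transport integral of $\sigma[f]$ of Lemma~\ref{l: Pr_symb_u_in_sc}; in particular $\sigma[u_0^f]$ is known, independent of $\rho$, and can be prescribed to leading order through $\sigma[f]$. Since $v_0$ is smooth, Lemma~\ref{distr_R3} applies and gives $m(u_0^f,v_0)\in I(K_\varepsilon^1)$ with
\begin{multline*}
\sigma[m(u_0^f,v_0)](t,\omega_1';\sigma_1)\\
=\int_{\mathbb{R}}\int_{\mathbb{S}^{n-1}}e^{-i\sigma_2 t}\,\bigl(\mathcal{F}^{-1}\eta\bigr)(\sigma_2,\omega_2')\,\overline{v_0(t,t\omega_1',t\omega_2')}\,\sigma[u_0^f](t,\omega_1',\omega_2';\sigma_1,\sigma_2)\,d\omega_2'\,d\sigma_2 .
\end{multline*}

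Next I would transfer this identity to the data. Choosing the order of $f$ sufficiently negative, $f$ may be taken complex-valued, lying in $C((0,\infty);H^{k+1}(\mathbb{R}^{2n};\mathbb{C}^4))$ with singular support in the interior of $(0,\infty)\times S$, hence a limit in that space of sources in $\mathcal{C}_{\mathrm{sym}}(S)+i\,\mathcal{C}_{\mathrm{sym}}(S)$. By Lemma~\ref{polarization} (with second argument $h$) and linearity of $g\mapsto u_0^g$, the data determine $m(u_0^g,v_0)$ on $(0,\infty)\times W_1$ for every such $g$; then, by the stability estimate of Corollary~\ref{cor: stability_for_aux_eq} and the continuity of $a\mapsto m(a,v_0)$ from $H^k$ to $H^k$, the data determine $m(u_0^f,v_0)$ on $(0,\infty)\times W_1$ as an element of $C((0,\infty);H^{k}(W_1))$. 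Since this function is, a priori by Lemma~\ref{distr_R3}, conormal to $K_\varepsilon^1$ over the data region, for $k$ large relative to its order the data determine its principal symbol, hence the left side of the displayed identity at $(t_*,\omega_1;\sigma_1)$ for every $\sigma_1$ (legitimate since $t_*\omega_1\in W_1$). Shrinking the $\omega_2'$-support of $\sigma[f]$ towards $\{\omega_2\}$ and using continuity of $v_0$, this left side becomes $\overline{v_0(t_*,t_*\omega_1,t_*\omega_2)}\,W(f)+o(1)$, where
\begin{equation*}
W(f)=\int_{\mathbb{R}}\int_{\mathbb{S}^{n-1}}e^{-i\sigma_2 t_*}\bigl(\mathcal{F}^{-1}\eta\bigr)(\sigma_2,\omega_2')\,\sigma[u_0^f](t_*,\omega_1,\omega_2';\sigma_1,\sigma_2)\,d\omega_2'\,d\sigma_2
\end{equation*}
is a known quantity. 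Since $s_*\omega_2\in W_2=\operatorname{supp}\chi$, the function $\eta(\cdot,\omega_2)$ is not identically zero, so $\mathcal{F}^{-1}\eta(\cdot,\omega_2)\not\equiv0$; combined with the freedom in prescribing $\sigma[u_0^{in}]$ through $\sigma[f]$ and the nonvanishing of $V$ in~\eqref{A} (which keeps the transport integral of Lemma~\ref{l: Pr_symb_u_in_sc} nonzero for suitable $\sigma[f]$), one may keep $W(f)$ bounded away from $0$ along the shrinking family. Dividing and passing to the limit then recovers $\overline{v_0(t_*,t_*\omega_1,t_*\omega_2)}$, hence $v_0(t_*,t_*\omega_1,t_*\omega_2)$, from the data.

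The step I expect to be the main obstacle is this transfer from smooth to conormal sources while retaining the principal symbol: one must pick the order of $f$ so that $f$ and $u_0^f$ are regular enough for the stability estimate to pin $m(u_0^f,v_0)$ down as a function, yet the associated conormal distribution on $K_\varepsilon^1$ stays singular enough there that this function still determines its principal symbol --- closing this gap requires careful bookkeeping of the orders in Lemmas~\ref{par_main} and~\ref{distr_R3}, including the dimensional shift under the $x_2$-integration defining $m$. A secondary obstacle is the quantitative angular localization: estimating the $\omega_2'\neq\omega_2$ contributions uniformly while keeping $W(f)$ nondegenerate, and verifying that the symmetrizing piece near $(t_*,t_*\omega_2,t_*\omega_1)$ truly drops out at the target.
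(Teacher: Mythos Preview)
Your proposal is correct and follows essentially the same route as the paper: construct a conormal source on $K_\varepsilon$, use Lemma~\ref{par_main} to get $u_0^f\in I(K_\varepsilon)$, note $\sigma[u_0^f]=\sigma[u_0^{in}]$ since the scattered part is one order lower, apply Lemma~\ref{distr_R3} to compute $\sigma[m(u_0^f,v_0)]$, localize angularly in $\omega_2$, and divide by a known nonzero weight. The paper makes the construction explicit by taking $\sigma[f_0]=\mu(t)\bigl(\Theta(\omega_1)+\Theta(\omega_2)\bigr)\nu_\varepsilon(\sigma_1)\nu_\varepsilon(\sigma_2)$ with $\nu_\varepsilon(\sigma)=\sigma^{-l}$ for $\sigma>1$ and $l$ large, normalizes the transport integral via the choice of $\mu$, and localizes by letting $\Theta\to\delta_{\omega_2^*}$; your abstract version (shrinking the microsupport) is the same manoeuvre.

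Two remarks on the obstacles you flag. First, the order bookkeeping is less delicate than you fear: once the order of $f$ is negative enough that $m(u_0^f,v_0)$ lies in $C^0$ over $(0,\infty)\times W_1$ (via the stability estimate and Sobolev embedding), you know it as a function on that open set, and since the principal symbol of a conormal distribution is a local invariant, this already determines $\sigma[m(u_0^f,v_0)]$ there --- there is no tension between ``regular enough to be pinned down'' and ``singular enough to carry a symbol.'' The paper simply writes ``fix $l$ sufficiently large'' and does not spell out the passage from smooth sources in $\mathcal{C}_{\mathrm{sym}}(S)$ to the conormal $f$; your explicit use of Corollary~\ref{cor: stability_for_aux_eq} and density is in fact more careful on this point. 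Second, for the nonvanishing of your weight $W(f)$ the paper gives a concrete fix worth adopting: if the $\sigma_2$-integral $\int e^{-i\sigma_2 t_*}\mathcal{F}^{-1}\eta(\sigma_2,\omega_2)\nu_\varepsilon(\sigma_2)\,d\sigma_2$ happens to vanish, replace $\nu_\varepsilon$ by $\nu_\varepsilon+\phi$ with $\phi$ smooth and supported in an interval where $\mathcal{F}^{-1}\eta(\cdot,\omega_2)\neq0$ (such an interval exists because $\eta(\cdot,\omega_2)$ is nonzero, smooth, and compactly supported, so its inverse Fourier transform is entire and not identically zero on $(\varepsilon,\infty)$). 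This is cleaner than arguing abstractly that $W(f)$ stays bounded away from zero along the shrinking family.
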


\begin{proof}
    Let us fix $t'>\varepsilon$ and $\omega_1^*$, $\omega_2^* \in \mathbb{S}^{n-1}$ such that $t'\omega_1^* \in  W_1$ and $s\omega_2^*\in W_2$ for some $s>0$. Then, under the change of variables, given by \eqref{change_variables}, we get
    \begin{align*}
        (t', t'\omega_1^*, t'\omega_2^*) \mapsto (t', 0, \omega_1^*, 0, \omega_2^*).
    \end{align*}
    Hence, it remains to determine $v_0(t', 0, \omega_1^*, 0, \omega_2^*)$ from the data. We set $t^* = t' + 3\varepsilon$. Let $v'$ be the solution of the shifted initial-value problem
    \begin{equation*}
        \begin{cases}
            \mathcal{A}v' = 0 & \text{for } t>3\varepsilon,\\
            v'\arrowvert_{t=3\varepsilon} = h.
        \end{cases}
    \end{equation*}
    Then $v'(t, \cdot) = v(t - 3\varepsilon,\cdot)$, and hence, it sufficient to determine $v_0'(t^*, 0, \omega_1^*, 0, \omega_2^*)$ from the data.

    Next, we construct a source. Let $\mu \in C_0^\infty(\mathbb{R})$ be a function supported in $(\varepsilon, 2\varepsilon)$ such that 
\begin{equation*}
    V(t^*,\omega_1^*,\omega_2^*) \int_{\varepsilon}^{t^*}  \frac{\mu(s)}{V(s,\omega_1^*,\omega_2^*)}ds = 1,
\end{equation*}
where $V$ is the function defined by \eqref{A}. Let $\Theta$ be a smooth function on $\mathbb{S}^{n-1}$, to be specified later, and $\nu$ be a smooth function on $\mathbb{R}$ such that 
\begin{equation}\label{nu}
    \nu(\sigma) = \nu_\varepsilon (\sigma) = 
    \begin{cases}
        0 & \text{if } \sigma<\varepsilon,\\
        1/\sigma^l & \text{if } \sigma > 1.
    \end{cases}
\end{equation}
The function $\nu$ will be used in \eqref{source} to define the symbol of a source; the decay factor $1/\sigma^l$ for large frequencies ensures that the symbol belongs to the required class, while the vanishing of $\nu$ for $\sigma < \varepsilon$ guarantees that this source satisfies the assumptions of Lemma \ref{par_main}.

We fix $l \in \mathbb{N}$ sufficiently large. Let $F_0 \in I(K_\varepsilon)$ be a function supported in $(\varepsilon,2\varepsilon)\times S$ and symmetric with respect to the variables $x_1$ and $x_2$, with principal symbol given by 
\begin{equation}\label{source}
    \sigma[F_0](t,\omega_1,\omega_2, \sigma_1, \sigma_2) = \mu(t) (\Theta(\omega_1) + \Theta(\omega_2) )\nu(\sigma_1) \nu(\sigma_2).  
\end{equation}
Let $F= [F_0,0,0,0]^T$ and $u = u^F$ be the corresponding solution of \eqref{auxiliary_eq}. From Lemmas \ref{par_main} and \ref{distr_R3}, it follows that $m(u_0, v_0')\in I(K^1_\varepsilon)$ and for $t>3\varepsilon$,
\begin{multline*}
    \sigma[m(u_0,v_0')](t,\omega_1,\sigma_1) = \sigma[m(u_0^{in},v_0')](t,\omega_1,\sigma_1)\\
    = \int_{\mathbb{R}} \int_{\mathbb{S}^{n-1}} e^{-i\sigma_2 t} [\mathcal{F}^{-1} \eta] (\sigma_2, \omega_2)  \overline{v_0'(t, 0,\omega_1,0,\omega_2)}\sigma[u_0^{in}](t, \omega_1, \omega_2; \sigma_1, \sigma_2)  d\omega_2d\sigma_2,
\end{multline*}
where $u_0^{in}$ is the function defined by \eqref{u_in}.  Due to  Lemma \ref{l: Pr_symb_u_in_sc}, we know that 
\begin{multline}\label{eq: princ_symb_u_in}
    \sigma[u_0^{in}] (t, \omega_1, \omega_2, \sigma_1, \sigma_2)\\
    = i V(t, \omega_1,\omega_2) \int_0^{t} \frac{\mu(s)}{V(s, \omega_1,\omega_2)}  (\Theta(\omega_1) + \Theta(\omega_2) ) \nu(\sigma_1) \nu(\sigma_2) ds,
\end{multline}
for $t>3\varepsilon$. Therefore, 
\begin{multline*}
    \sigma[m(u_0,v_0')](t,\omega_1,\sigma_1) = \int_{\mathbb{R}} \int_{\mathbb{S}^{n-1}} e^{-i\sigma_2 t} [\mathcal{F}^{-1} \eta] (\sigma_2, \omega_2)  \overline{v_0'(t, 0,\omega_1,0,\omega_2)}\\
    \times i V(t, \omega_1,\omega_2) \int_0^{t} \frac{\mu(s)}{V(s, \omega_1,\omega_2)} (\Theta(\omega_1) + \Theta(\omega_2) ) \nu(\sigma_1) \nu(\sigma_2) ds  d\omega_2d\sigma_2,
\end{multline*}
for $t>3\varepsilon$. By letting $\Theta = \Theta_r$ approach $\delta_{\omega_2^*}$, the Dirac delta function centered at $\omega_2^*$, as $r \to 0$, we obtain
\begin{multline*}
    \lim_{r\rightarrow 0}\sigma[m(u_0,v_0')](t,\omega_1^*,\sigma_1) = \int_{\mathbb{R}} e^{-i\sigma_2 t} [\mathcal{F}^{-1} \eta] (\sigma_2, \omega_2^*)  \overline{v_0'(t, 0,\omega_1^*,0,\omega_2^*)}\\
    \times i V(t, \omega_1^*,\omega_2^*) \int_0^{t} \frac{\mu(s)}{V(s, \omega_1^*,\omega_2^*)} \nu(\sigma_1) \nu(\sigma_2) ds d\sigma_2.
\end{multline*}
Here, we used that $\omega_1^*\neq \omega_2^*$, so that $\Theta_r(\omega_1^*) =0$ as $r \to 0$. Therefore, due to the choice of $\mu$, we obtain
\begin{align*}
    \lim_{r\rightarrow 0}\sigma[m(u_0,v_0')](t^*,\omega_1^*,\sigma_1) &= \int_{\mathbb{R}} e^{-i\sigma_2 t^*} [\mathcal{F}^{-1} \eta] (\sigma_2, \omega_2^*)  \overline{v_0'(t^*, 0,\omega_1^*,0,\omega_2^*)} \nu(\sigma_1) \nu(\sigma_2)  d\sigma_2\\
    & = i \overline{v_0'(t^*, 0,\omega_1^*,0,\omega_2^*)} \nu(\sigma_1) \int_{\mathbb{R}} e^{-i\sigma_2 t^*} [\mathcal{F}^{-1} \eta] (\sigma_2, \omega_2^*) \nu(\sigma_2)  d\sigma_2.
\end{align*}
Let $u'$ be the solution of \eqref{shifted_Av_eq_0}. Then, by Lemma \ref{l:dif_smooth}, we know that $u - u'$ is smooth on $(3\varepsilon,\infty)\times \mathbb{R}^{2n}$. Therefore, 
\begin{multline}\label{eq: recov_v}
    \lim_{r\rightarrow 0}\sigma[m(u_0',v_0')](t^*,\omega_1^*,\sigma_1)\\
    = i \overline{v_0'(t^*, 0,\omega_1^*,0,\omega_2^*)} \nu(\sigma_1) \int_{\mathbb{R}} e^{-i\sigma_2 t^*} [\mathcal{F}^{-1} \eta] (\sigma_2, \omega_2^*) \nu(\sigma_2)  d\sigma_2.
\end{multline}
Since $u'$ and $v'$ are solutions of shifted initial-value problems, by Lemma \ref{polarization}, the data provide the left-hand side of the above identity. Moreover, the functions $\nu$ and $\eta$ are known. Therefore, we can recover $v_0'(t^*, 0,\omega_1^*,0,\omega_2^*)$ unless the integral on the right-hand side is zero. Consider the case where this integral vanishes, that is,
\begin{equation*}
    \mathcal{F}[[\mathcal{F}^{-1} \eta] (\cdot, \omega_2^*) \nu(\cdot)] (t^*) = 0.
\end{equation*}
As $s\omega_2^* \in W_2$ for some $s>0$, $\eta(\cdot,\omega_2^*)$ is a compactly supported, smooth, and nonzero function. Hence, we know that $\mathcal{F}^{-1} \eta(\cdot,\omega_2^*)$ is a Schwartz function and entire. Therefore, there exists an interval $(a,b) \subset (\varepsilon, \infty)$ where $\mathcal{F}^{-1} \eta(\cdot,\omega_2^*)$ does not vanish. Thus, we can find a smooth function $\phi$ supported in $(a,b)$ such that
\begin{equation*}
    \mathcal{F}[\mathcal{F}^{-1} \eta] (\cdot, \omega_2^*) \phi(\cdot)] (t^*) \neq 0.
\end{equation*}
Hence, if we repeat the arguments above for $\nu' = \nu + \phi$ instead of $\nu$, we obtain \eqref{eq: recov_v} with a non-zero integral on the right-hand side. This completes the proof.
\end{proof}

\begin{remark}\label{remak_shift}
    We observe that identity \eqref{eq: recov_v} holds for all smooth functions $v'$. Indeed, its derivation relies only on the smoothness of $v'$, while the assumption that it is a solution is used only to ensure that the left-hand side of \eqref{eq: recov_v} can be determined from the given data.
\end{remark}

To solve the inverse problem, we impose certain assumptions on the sets $W_1$, $W_2$, $\Sigma$, and $S$. To motivate these assumptions, we briefly outline the underlying strategy.
Consider $F$ and $h$, to be specified later, and let $u= u^F$ and $v = v^h$ be the corresponding solutions to \eqref{main_eq}. We decompose the pairing as
\begin{equation*}
    m(u_0^{sc}, v_0) = m(u_0, v_0) - m(u_0^{in}, v_0),
\end{equation*}
where $u_0^{in}$ and $u_0^{sc}$ are defined by \eqref{u_in} and \eqref{u_sc}, respectively. We aim to compute the full symbol of the left hand side. Due to Lemmas \ref{l:dif_smooth} and \ref{polarization}, the full symbol of $m(u_0, v_0)$ is determined by the measurement data. To compute the second term, we observe that $u_0^{in}$ is known, as it does not depend on the unknown density $\rho$. Therefore, it remains to determine $v_0$, which can be accomplished via Lemma~\ref{l: det_aux_funct}. To apply Lemma~\ref{l: det_aux_funct}, we require the following synchronization condition: for each point of interest $(t, x_1) \in (0, \infty) \times W_1$ and for every $x_2 \in W_2$, there exists a point $y = (y_1, y_2) \in S$ such that $|y_k - x_k| = t$ for $k = 1,2$. This ensures that the term $m(u_0^{in}, v_0)$ is determined from the data, and thus so is the left-hand side $m(u_0^{sc}, v_0)$.

Next, by applying Lemmas~\ref{l: Pr_symb_u_in_sc} and~\ref{distr_R3}, we show that for a suitable choice of $F$ and $h$, the term $m(u_0^{sc}, v_0)$ yields the quantity
\begin{equation*}
    \int_0^t \left( \rho(s\omega_1) + \rho(s\omega_2) \right) \, ds,
\end{equation*}
provided that $t\omega_1 \in W_1$ and $t\omega_2 \in W_2$.

To recover the partial X-ray transform data, we would like the ray $\{r\omega_1\}$ to avoid the set $\Sigma$, so that $\rho(s\omega_1) = 0$. On the other hand, we require that the rays $\{r\omega_2\}$ cover $\Sigma$, so that the integral provides information about $\rho$ on its support. These geometric requirements are encoded in a condition on the relative positions of $S$, $W_1$, $W_2$, and $\Sigma$ that we now describe.
 
To state the precise geometric assumptions, we introduce some notation. For $x, y \in \mathbb{R}^n$ and a set $W \subset \mathbb{R}^n$, we denote by $[x, y]$ the closed line segment connecting $x$ and $y$, and define
\begin{equation*}
	(x; W] = \bigcup_{p \in W} [x, p] \setminus \{x\}.
\end{equation*}

We now summarize the geometric requirements in the following condition.

\begin{condition}\label{condition_intro}
    There exists an open non-empty set $X_1 \subset W_1$, $T>0$, and $\ell\in (0,T)$ such that for every $x_1 \in X_1$, $x_2 \in W_2$, and $t\in (T-\ell, T+\ell)$, there exists $y = (y_1, y_2) \in S$ satisfying
\begin{equation*}
    |x_1 - y_1| = |x_2 - y_2| = t.
\end{equation*}
Additionally, there exists a point $z = (z_1, z_2) \in S$ such that:
\begin{enumerate}
    \item \label{1_condition} $\Sigma \subset (z_2; W_2]$.
    \item For every $x_2 \in W_2$, there exists a point $x_1 \in X_1$ such that:
    \begin{itemize}
        \item[a)] The segment $[z_1; x_1]$ does not intersect $\Sigma$, that is, $[z_1; x_1] \cap \Sigma = \varnothing$.
        \item[b)] The distances satisfy $|x_1 - z_1| = |x_2 - z_2|\in (T-\ell, T+\ell)$.
    \end{itemize}
\end{enumerate}
\end{condition}

\begin{figure}
	\centering
	\includegraphics[width=0.90\textwidth]{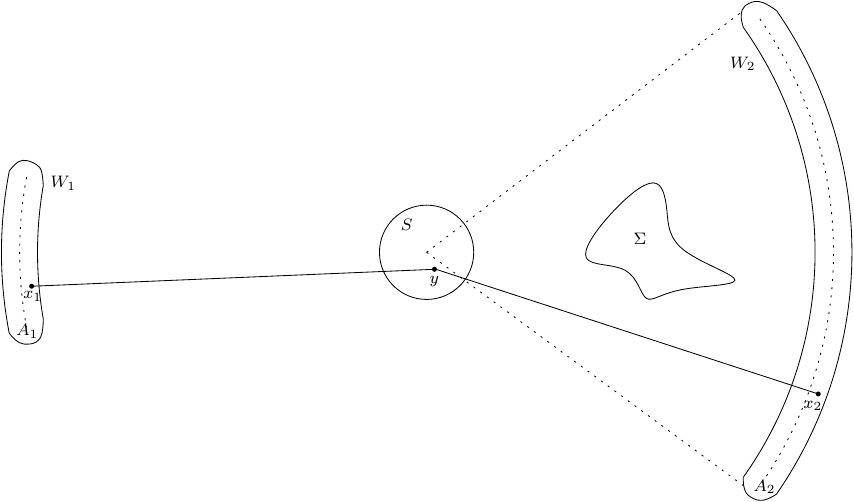}
	\caption{Illustrating the geometric configuration of the sets $W_1$, $W_2$, $S$, and $\Sigma$, with rays from the source point $y$ to detection points $x_1 \in W_1$ and $x_2 \in W_2$.}
	\label{fig:geometry}
\end{figure}

We next present a model configuration that illustrates
Condition~\ref{condition_intro}. Let $S \subset \mathbb{R}^{2n}$ be an open set containing the origin. 
Let $A_1$ and $A_2$ be two disjoint spherical caps on the sphere in $\mathbb{R}^n$ of radius $T > 0$. 
We assume that $\Sigma$ lies inside the open cone of height $T$ with vertex at the origin and spherical base $A_2$. 
Furthermore, let $W_1$ and $W_2$ denote the $\ell$-neighborhoods of $A_1$ and $A_2$, respectively. 
Assume that $\ell > 0$ is sufficiently small relative to $S$ so that for any $(x_1,x_2) \in W_1\times W_2$, one can find a point $y = (y_1, y_2) \in S$ such that $x_1$ and $x_2$ lie at the same distance from $y_1$ and $y_2$, respectively. This geometric configuration is illustrated in Figure \ref{fig:geometry}.

With these geometric assumptions in place, we establish several auxiliary results.

\begin{corollary}\label{cor:recover_v}
    Assume that Condition~\ref{condition_intro} holds, with the corresponding
set $X_1$ and parameters $T$ and $\ell$, and let $z$ be the point given there.
 Let $h \in \mathcal{C}_{\operatorname{sym}}(S)$, and let $v = v^h$ be the solution to \eqref{main_eq}. Then, for distinct points $x_1^* \in X_1$, $x_2^* \in W_2$, and any time $t^* \in (T - \ell, T + \ell)$, the given data determine $v_0(t^*, x_1^*, x_2^*)$.
\end{corollary}

\begin{proof}
    Let $x_1^* \in X_1$, $x_2^* \in W_2$, and $t^* \in (T - \ell, T + \ell)$. By Condition~\ref{condition_intro}, there exists a point $y = (y_1, y_2) \in S$ such that
    \begin{equation*}
        |x_1^* - y_1| = |x_2^* - y_2| = t^*.
    \end{equation*}
    Without loss of generality, we may assume that $y$ is the origin in $\mathbb{R}^{2n}$; otherwise, we can translate the construction of the cone $K_\varepsilon$ so that it originates from $y$ and proceed as before. Under this assumption, we have $|x_1^*| = |x_2^*| = t^*$, and the claim follows directly from Lemma~\ref{l: det_aux_funct}.
\end{proof}

We can now state the following lemma.

\begin{lemma}\label{l:cond}
Assume that Condition~\ref{condition_intro} holds, with the corresponding set $X_1$ and parameters $T$ and $\ell$, and let $z$ be the point given there. Then there exist a ball $Y_2$ in $\mathbb{R}^{n}$ centered at $z_2$ and an open subset $X_2 \subset W_2$ such that:
\begin{enumerate}
    \item \label{item:YinS} $Y = \{z_1\} \times Y_2 \subset S$.
    \item \label{item:SigmaCovered} For every $p \in Y_2$, we have $\Sigma \subset (p; X_2]$.
    \item \label{item:existenceOfx1} For every $y \in Y$ and $x_2 \in X_2$, there exists a point $x_1 \in X_1$ such that:
    \begin{itemize}
        \item[a)] \label{item:noIntersection} The segment $[y_1; x_1]$ does not intersect $\Sigma$, that is, $[y_1; x_1] \cap \Sigma = \varnothing$.
        \item[b)] \label{item:distanceEquality} The distances satisfy $|x_1 - y_1| = |x_2 - y_2| \in (T-\ell, T+\ell)$.
    \end{itemize}
    \item\label{item: Y2doesnotintersect} $Y_2\cap (X_2 \cup \Sigma)=\varnothing$.
\end{enumerate}
\end{lemma}

\begin{proof}
We define
\begin{equation*}
X_{2,\varepsilon} = \left\{ p \in W_2 : \; \operatorname{dist}(p, \partial W_2) > \varepsilon \right\}.
\end{equation*}
By Condition~\ref{condition_intro},
\begin{equation*}
\Sigma \subset (z_2; W_2] \subset \bigcup_{\varepsilon > 0}  (z_2; X_{2,\varepsilon}].
\end{equation*}
Since $\Sigma$ is compact and the right-hand side is a union of open sets, there exists $\varepsilon_1 > 0$ such that
\begin{equation*}
\Sigma \subset (z_2, X_{2,\varepsilon_1}].
\end{equation*}
We set $X_2' = X_{2,\varepsilon_1}$.

Next, we will show that for sufficiently small $\varepsilon_2>0$, 
\begin{equation}\label{eq_l:cond}
\Sigma \subset (q, X_2'] \quad \text{for all } q \in B_{\varepsilon_2}(z_2).
\end{equation}
Suppose, for the sake of contradiction, that there exist sequences $\{q_n\} \subset \mathbb{R}^n$ and $\{p_n\} \subset \Sigma$ such that $p_n \notin (q_n; X_2']$ and $q_n \rightarrow z_2$ as $n\rightarrow \infty$. Since $\Sigma$ is compact, we may assume (after passing to a subsequence) that $p_n \to p \in \Sigma$. Consider the open cone
\begin{equation*}
C^+ = \{p + s(y - p): \; y \in X_2', \; s>0\},
\end{equation*}
and let $C^-$ denote its reflection about $p$, that is,
\begin{equation*}
C^- = \left\{ 2p - y : y \in C^+ \right\}.
\end{equation*}
Since $C^-$ is open and contains $z_2$, there exists an open neighbourhood $U$ of $z_2$ such that $U \subset C^-$ and $U \cap \Sigma = \varnothing$. It follows that for any $q \in U$, the set $(q; X_2']$ contains $p$ and hence an open neighbourhood of $p$. This contradicts our assumption, and consequently, for sufficiently small $\varepsilon_2 > 0$, \eqref{eq_l:cond} holds.

Since $z_2 \notin \Sigma$ and $\Sigma$ is closed, we may further shrink $\varepsilon_2$ if necessary to ensure that
\begin{equation*}
    \overline{B_{\varepsilon_2}(z_2)} \cap \Sigma = \varnothing, \qquad \{z_1\} \times B_{\varepsilon_2}(z_2) \subset S.
\end{equation*}

Finally, set $\varepsilon = \min\{\varepsilon_1, \varepsilon_2\}$ and define $Y_2 = B_\varepsilon(z_2)$, $X_2 = X_2'\setminus Y_2$. Then, properties \eqref{item:YinS}, \eqref{item:SigmaCovered}, and \eqref{item: Y2doesnotintersect} in the lemma are satisfied.

To verify property \eqref{item:existenceOfx1}, we take any $y = (z_1, y_2) \in Y$ and $x_2 \in X_2$. Then, by the triangle inequality,
\begin{equation*}
    |z_2 - x_2| - \varepsilon < |y_2 - x_2| < |z_2 - x_2| + \varepsilon.
\end{equation*}
Since $\varepsilon < \varepsilon_1$, it follows from the definition of $X_2$ that $B_\varepsilon(x_2) \subset W_2$. Hence, there exists $x_2' \in B_\varepsilon(x_2)$ on the line through $z_2$ and $x_2$ such that
\begin{equation}\label{eq_2_l:cond}
    |z_2 - x_2'| = |y_2 - x_2|.
\end{equation}
By the second part of Condition~\ref{condition_intro}, there exists $x_1 \in X_1$ such that
\begin{equation*}
    [z_1; x_1] \cap \Sigma = \varnothing \quad \text{and}   \quad |x_1 - z_1| = |z_2 - x_2'| \in (T-\ell, T+\ell).
\end{equation*}
This and \eqref{eq_2_l:cond} imply that property \eqref{item:existenceOfx1} holds as well.
\end{proof}

We now establish the following auxiliary lemma.

\begin{lemma}\label{cor:cond}
Assume that Condition \ref{condition_intro} holds. In particular, the conditions of Lemma \ref{l:cond} hold, and let $z, Y_2, Y$, $X_1$, and $X_2$ be the point and sets defined therein. Then, for every $y^* \in Y$, there exists a dense subset $X_2(y^*) \subset X_2$ such that for every $x_2 \in X_2(y^*)$, there exist $y \in S$, $x_1 \in X_1$, and $h \in \mathcal{C}_{\operatorname{sym}}(S)$ such that:
\begin{enumerate}
    \item \label{cor:item:y2inY_2} $y_2\in Y_2$.
    \item \label{item:cor-y2path} Either $y_2 \in [y_2^*; x_2]$ or $y_2^* \in [y_2; x_2]$.
    \item \label{item:cor-noSigma} $[y_1; x_1] \cap \Sigma = \varnothing$.
    \item \label{item:cor-equalDist} $|x_1 - y_1| = |x_2 - y_2| \in (T- \ell, T + \ell)$.
    \item \label{item:cor-nonzeroV} $v_0(t, x_1, x_2) \neq 0$, where $t = |x_1 - y_1|$ and $v = v^h$ is the solution to \eqref{main_eq}.
\end{enumerate}
\end{lemma}

\begin{proof}
    It suffices to prove that for each $y^*\in Y$, $x_2^*\in X_2$, and $\delta>0$, there exist $y\in S$, $x_1\in X_1$, $x_2\in X_2$, and $h \in \mathcal{C}_{\operatorname{sym}}(S)$ satisfying \eqref{cor:item:y2inY_2}-\eqref{item:cor-nonzeroV} and $|x_2^* - x_2|<\delta$. Indeed, in this case we may define $X_2(y^*)$ as the set of all such points $x_2 \in X_2$, which is then dense in $X_2$.

    Let $y^*\in Y$, $x_2^*\in X_2$, and let $x_1^* \in X_1$ be a point given by \eqref{item:existenceOfx1} in Lemma \ref{l:cond}. Let $\delta > 0$ be sufficiently small. Without loss of generality, we may assume that $y^*$ is the origin in $\mathbb{R}^{2n}$. Then we set $t^* = |x_1^*| = |x_2^*|$. Consider the set
\begin{equation*}
    U_\delta = (t^* - \delta, t^* + \delta) \times B_\delta(x_1^*) \times B_\delta(x_2^*),
\end{equation*}
where $B_\delta(x_j^*)$ denotes the open ball in $\mathbb{R}^n$ of radius $\delta$ centered at $x_j^*$. Let $\varepsilon \in (0, t^*/2)$ be sufficiently small, and assume that for every $h \in \mathcal{C}_{\operatorname{sym}}(S)$, we have $v_0^h\arrowvert_{U_\delta} = 0$.

Next, we construct a source for which the corresponding solution is not identically zero on the smaller set $U_{\delta/2}$. We will use coordinates given by \eqref{change_variables}. Let $\mu \in C_0^\infty(\mathbb{R})$ be a function supported in $(\varepsilon, 2\varepsilon)$ such that 
\begin{equation*}
    V(t,\omega_1,\omega_2) \int_{0}^{t}  \frac{\mu(s)}{V(s,\omega_1,\omega_2)}ds = 1,
\end{equation*}
where $V$ is the function defined by \eqref{A} and $\omega_j = x_j/|x_j|$. Let $\nu$ be a smooth function on $\mathbb{R}$ such that 
\begin{equation*}
    \nu (\sigma) = 
    \begin{cases}
        0 & \text{if } \sigma<\varepsilon,\\
        1/\sigma^l & \text{if } \sigma > 1,
    \end{cases}
\end{equation*}
for some large $l\in \mathbb{N}$. Let $F_0 \in I(K_\varepsilon)$ be a function supported in $(\varepsilon,2\varepsilon)\times S$ and symmetric with respect to the variables $x_1$ and $x_2$, with principal symbol given by 
\begin{equation*}
    \sigma[F_0](t,\omega_1,\omega_2, \sigma_1, \sigma_2) = \mu(t) \nu(\sigma_1) \nu(\sigma_2).  
\end{equation*}
Let $F= [F_0,0,0,0]^T$ and $u^F$ be the corresponding solution of \eqref{main_eq}. Due to  \eqref{princ_symb_u_in}, we know that 
\begin{equation*}
    \sigma[u_0^F] (t, \omega_1, \omega_2, \sigma_1, \sigma_2) = iV(t, \omega_1,\omega_2) \int_0^{t} \frac{\mu(s)}{V(s, \omega_1,\omega_2)}  \nu(\sigma_1) \nu(\sigma_2) ds = i\nu(\sigma_1) \nu(\sigma_2).
\end{equation*}
Since $l$ is large, it follows that $F\in C (\mathbb{R}; H^{k+1}(\mathbb{R}^{2n}; \mathbb{C}^4))$ for some $k\in \mathbb{N}$. By Proposition \ref{cor: stability_for_aux_eq}, $u^F \in C(\mathbb{R}; H^{k}(\mathbb{R}^{2n}; \mathbb{C}^4))$. Moreover, from the above expression for the principal symbol of $u_0^F$, we conclude that $u_0^F$ is not identically zero on $U_{\delta/2}$.

Let $\varkappa$ be a function appearing in the initial condition of \eqref{shifted_Av_eq_0}. We additionally assume that $\varkappa$ is symmetric with respect to the variables $x_1$ and $x_2$. Let $w$ and $u'$ be the solutions of \eqref{eq:for_omega} and \eqref{shifted_Av_eq_0}, respectively. By Lemma \ref{l:dif_smooth}, $u^F - u'$ is smooth, and hence, $u_0'$ is not identically zero on $U_{\delta/2}$.
 
Next, we set $f(x_1,x_2) = \varkappa(x_1,x_2) w(3\varepsilon,x_1,x_2)$ and let $u^f$ be the solution of \eqref{main_eq}. Then, $u^f(t,x_1,x_2) = u'(t+3\varepsilon,x_1,x_2)$, and therefore, $u_0^f$ is not identically zero on $U_{\delta}$.

On the other hand, we can approximate $f$ by a function $h \in \mathcal{C}_{\operatorname{sym}}(S)$ in the topology of $H^{k+1}(\mathbb{R}^{2n}; \mathbb{C}^4)$. By our assumption $v_0^h \arrowvert_{U_\delta} = 0$, and by Lemma \ref{direct_problem_est}, it follows that $u_0^f \arrowvert_{U_\delta} = 0$. This contradicts our earlier conclusion that $u_0^f$ is not identically zero on the large set $U_{\delta}$.
Therefore, there exist $(t, x_1, x_2) \in U_\delta$ and $h \in \mathcal{C}_{\operatorname{sym}}(S)$ such that
\begin{equation}\label{eq:v_noteq_zero}
v_0^h(t, x_1, x_2) \neq 0,
\end{equation}
where $v^h$ is the solution to equation \eqref{main_eq}. 

Next, we note that $|t - t^*| < \delta$. Therefore, since $\delta > 0$ is sufficiently small, it follows that $t \in (T - \ell, T + \ell)$. Moreover, as $Y_2$ is an open set, there exists a point $y_2$ on the line through $y_2^*$ and $x_2$ such that $|x_2 - y_2| = t$. Since $[y_1^*; x_1^*] \cap \Sigma = \varnothing$ and $\Sigma$ is closed, it follows that for sufficiently small $\delta > 0$, we have $[y_1^*; x_1] \cap \Sigma = \varnothing$, and consequently, there exists $y_1$ on the line through $y_1^*$ and $x_1$ such that
\begin{equation*}
    y=(y_1,y_2)\in S,
    \qquad
    [y_1,x_1]\cap\Sigma=\varnothing,
    \qquad
    |x_1 - y_1| = t.
\end{equation*}
By construction, the tuple $(y, x_1, x_2, h)$ satisfies all the required properties \eqref{cor:item:y2inY_2}-\eqref{item:cor-nonzeroV} and $|x_2^* - x_2|<\delta$.

\end{proof}

\begin{remark}\label{rem:choice_sources_osc_C}
Although the physical model is posed as an initial-value problem, we analyze the inverse problem via an auxiliary source-driven formulation. Instead of arbitrary initial data, we use time-localized sources with prescribed microlocal structure. Each such source can be matched with an initial condition so that the corresponding homogeneous and forced solutions agree modulo a smooth function for all $t>0$. As a result, the measurement data are unchanged, while the microlocal analysis becomes tractable.

The class of sources used in this work can be realized explicitly as oscillatory integrals: In polar coordinates $x_j = r_j \omega_j$ and with shifted radial variables $s_j = r_j - t$, a typical source has the form
$$
F_0(t,x_1,x_2)
 = \int e^{ i ( s_1 \sigma_1 + s_2 \sigma_2 ) }
   a(t,\omega_1,\omega_2,\sigma_1,\sigma_2)
   \, d\sigma_1 d\sigma_2,
$$
where the amplitude $a$ is smooth and compactly supported in $t$, and satisfies
$$
a(t,\omega_1,\omega_2,\sigma_1,\sigma_2)
 = \mu(t)\,(\Theta(\omega_1)+\Theta(\omega_2))\,
   \nu(\sigma_1)\,\nu(\sigma_2).
$$
The oscillatory phases concentrate the singularities of the source on the set $\{ |x_1| = |x_2| = t \}$. The angular profile $\Theta$ selects prescribed propagation directions, while the frequency cutoffs $\nu$ exclude a neighborhood of low frequencies where the operator $\mathcal{A}$ fails to behave pseudodifferentially. This structured choice of sources ensures that the leading singularities of the solution can be computed explicitly along characteristic rays, making the dependence on the unknown density $\rho$ transparent.
\end{remark}

Now, we are ready to prove the main result.

\begin{proof}[Proof of Theorem \ref{main result}]
Since Condition~\ref{condition_intro} holds, we may apply Lemma \ref{cor:cond}. Let $Y$ be the corresponding set, and fix any $y^* \in Y$. Let $X_2(y^*) \subset X_2$ be the dense subset provided by the corollary, and choose any $x_2 \in X_2(y^*)$ such that $[y_2^*,x_2]$ intersects $\Sigma$. Then, there exist $y \in S$, $x_1 \in X_1$, and $h \in \mathcal{C}_{\operatorname{sym}}(S)$ satisfying properties \eqref{cor:item:y2inY_2}-\eqref{item:cor-nonzeroV} of Lemma \ref{cor:cond}. Let $w=w^h$ be the corresponding solution to equation \eqref{main_eq}.

Without loss of generality, we assume that $y$ is the origin in $\mathbb{R}^{2n}$. Hence, by property~\eqref{item:cor-equalDist} of Lemma \ref{cor:cond}, we can write
\begin{equation*}
x_1 =  t' \kappa_1 \quad \text{and} \quad x_2 = t' \kappa_2,
\end{equation*}
where $\kappa_1 = x_1 / |x_1|$, $\kappa_2 = x_2 / |x_2|$, and $t' = |x_1| = |x_2|$. Since $[y_2^*,x_2]$ intersects $\Sigma$, properties \eqref{item: Y2doesnotintersect} of Lemma \ref{l:cond} and \eqref{item:cor-y2path} of Lemma \ref{cor:cond} imply that $[y_2,x_2]$ also intersects $\Sigma$, whereas \eqref{item:cor-noSigma} of Lemma \ref{cor:cond} ensures that $[y_1,x_1]\cap\Sigma=\varnothing$. As $y$ is the origin, this gives $x_1 \neq x_2$; and since $|x_1|=|x_2|$, we conclude that $\kappa_1 \neq \kappa_2$.

Recall that $Y = \{z_1\} \times Y_2$, where $z_1$ and $Y_2$ are as defined in Condition~\ref{condition_intro} and Lemma~\ref{l:cond}, respectively. Since $Y_2$ is open, we may choose a sufficiently small constant $\varepsilon > 0$ such that
\begin{equation*}
\{ s \kappa_2: \; 0 < s < 2 \varepsilon \} \subset Y_2.
\end{equation*}
Moreover, since $Y_2 \cap (\Sigma\cup X_2) = \varnothing$, it follows that $t' > \varepsilon$ and
\begin{equation}\label{eq:rho2_zero}
\rho(s \kappa_2) = 0 \quad \text{for all } s \in (0, 2\varepsilon).
\end{equation}
We define 
\begin{equation*}
    v(t,\cdot) = w(t - 3\varepsilon,\cdot), \qquad t>3\varepsilon,
\end{equation*}
and set $t^* = t'+3\varepsilon$. Then,
\begin{equation*}
    v_0(t^*, x_1,x_2) = w_0(t', x_1,x_2) \neq 0,
    \qquad
    \text{and}
    \qquad
    t^* \in (T- \ell + 3\varepsilon, T + \ell + 3\varepsilon)
\end{equation*}

Next, we construct a source in the same way as in Lemma~\ref{l: det_aux_funct}. We will use coordinates given by \eqref{change_variables}. Let $\mu \in C_0^\infty(\mathbb{R})$ be a function supported in $(\varepsilon, 2\varepsilon)$ such that  
\begin{equation*}
     V(t^*, \kappa_1, \kappa_2)\int_{\mathbb{R}}  \frac{\mu(s)}{V(s, \kappa_1, \kappa_2)} \, ds = 1,
\end{equation*}
where $V$ is the function given by \eqref{A}. Let $\Theta$ be a smooth function on $\mathbb{S}^{n-1}$, and let $\nu$ be the function defined by \eqref{nu}. Let $F_0 \in I(K_\varepsilon)$ be a function supported in $(\varepsilon,2\varepsilon)\times S$ and symmetric with respect to the variables $x_1$ and $x_2$, with principal symbol given by 
\begin{equation*}
    \sigma[F](t,\omega_1,\omega_2, \sigma_1, \sigma_2) = \mu(t) (\Theta(\omega_1) + \Theta(\omega_2) ) \nu(\sigma_1) \nu(\sigma_2).  
\end{equation*}
Let $F= [F_0,0,0,0]^T$ and $u = u^F$ be the corresponding solution to \eqref{auxiliary_eq}.

We note that $u_0^{in}$, defined in \eqref{u_in}, does not depend on the density $\rho$, and thus it can be computed. Therefore, by Corollary~\ref{cor:recover_v}, the given data determine the quantity $m(u_0^{in}, v_0)$ on the region
\begin{equation*}
    \mathcal{U} = (T - \ell + 3\varepsilon,\, T + \ell + 3\varepsilon) \times X_1.
\end{equation*}
Let $u'$ denotes the solution to equation \eqref{shifted_Av_eq_0}. By Lemma~\ref{l:dif_smooth}, the difference $u - u'$ is smooth on $(3\varepsilon, \infty) \times \mathbb{R}^{2n}$. Hence, by Lemma~\ref{polarization}, we can extract $m(u_0', v_0)$ from the data on the region $\mathcal{U}$. Therefore, we recover the full symbol of $m(u_0,v_0)$ restricted to the region $\mathcal{U}$. Combining these observations, we conclude that the data determine the full symbol of $m(u_0^{\mathrm{sc}}, v_0)$ on $\mathcal{U}$, where $u_0^{\mathrm{sc}}$ is defined in \eqref{u_sc}. Lemma~\ref{l: Pr_symb_u_in_sc} implies that

\begin{multline*}
    \sigma[u_0^{sc}] (t,\omega_1,\omega_2,\sigma_1,\sigma_2)\\
    = i g^2 V(t,\omega_1,\omega_2) \int_0^t  \frac{1}{V(r,\omega_1,\omega_2)} \left( \frac{\rho(r\omega_1)}{-\sigma_1} + \frac{\rho(r\omega_2)}{-\sigma_2}\right) \sigma[u_0^{in}] (r,\omega_1,\omega_2,\sigma_1,\sigma_2) dr
\end{multline*}
and
\begin{equation*}
    \sigma [u_0^{in}] (t, \omega_1, \omega_2, \sigma_1, \sigma_2)
    = i V(t,\omega_1,\omega_2) \int_0^t   \frac{\mu(s)}{V(s,\omega_1,\omega_2)}     (\Theta(\omega_1) + \Theta(\omega_2) ) \nu(\sigma_1) \nu(\sigma_2) ds
\end{equation*}
on $N_\varepsilon^*K_\varepsilon \cap \{t>3\varepsilon\}$. Therefore, we obtain
\begin{multline*}
    \sigma[u_0^{sc}] (t,\omega_1,\omega_2,\sigma_1,\sigma_2) =  - g^2 V(t,\omega_1,\omega_2) \\
     \times \int_0^t  \left( \frac{\rho(r\omega_1)}{-\sigma_1} + \frac{\rho(r\omega_2)}{-\sigma_2}\right) \int_{0}^r \frac{\mu(s)}{V(s,\omega_1,\omega_2)}  (\Theta(\omega_1) + \Theta(\omega_2) ) \nu(\sigma_1) \nu(\sigma_2) ds dr.
\end{multline*}
By Lemma~\ref{distr_R3}, we know that $m(u_0^{sc},v_0) \in I(K_\varepsilon^1)$, and we can express its principal symbol as follows:
\begin{multline*}
    \sigma[m(u_0^{sc},v_0)] (t,\omega_1,\sigma_1) =  g^2 \int_{\mathbb{R}} \int_{\mathbb{S}^{n-1}} e^{-i\sigma_2 t} [\mathcal{F}^{-1} \eta] (\sigma_2, \omega_2)  \overline{v_0(t, 0,\omega_1,0,\omega_2)}V(t,\omega_1,\omega_2)\\
    \times \int_0^t \left( \frac{\rho(r\omega_1)}{\sigma_1} + \frac{\rho(r\omega_2)}{\sigma_2}\right) \int_{0}^r \frac{\mu(s)}{V(s,\omega_1,\omega_2)}  (\Theta(\omega_1) + \Theta(\omega_2) )\nu(\sigma_1) \nu(\sigma_2) ds dr  d\omega_2d\sigma_2.
\end{multline*}
We evaluate the above at $\omega_1=\kappa_1$ and let $\Theta=\Theta_r \to \delta_{\kappa_2}$ as $r\to 0$, where $\delta_{\kappa_2}$ is the Dirac delta function concentrated at $\kappa_2$. Then, since $\Theta_r(\kappa_1) = 0$ as $r\to 0$, we obtain 
\begin{multline*}
    \lim_{r \rightarrow 0} \sigma[m(u_0^{sc},v_0)] (t,\kappa_1,\sigma_1) =  g^2 \int_{\mathbb{R}}  e^{-i\sigma_2 t} [\mathcal{F}^{-1} \eta] (\sigma_2, \kappa_2)  \overline{v_0(t, 0,\kappa_1,0,\kappa_2)} \\
    \times V(t,\kappa_1,\kappa_2)\int_0^t  \left( \frac{\rho(r\kappa_1)}{\sigma_1} + \frac{\rho(r\kappa_2)}{\sigma_2}\right) \int_{0}^r \frac{\mu(s)}{V(s,\kappa_1,\kappa_2)}   \nu(\sigma_1) \nu(\sigma_2) ds dr  d\sigma_2.
\end{multline*}
By property \eqref{item:cor-noSigma} of Lemma \ref{cor:cond}, we know that 
\begin{equation*}
    \rho(s\kappa_1) = 0, \quad \text{for all } s\in [0,t^*].
\end{equation*}
Therefore, by \eqref{eq:rho2_zero} and the definition of $\mu$, we obtain  
\begin{equation*}
    V(t,\kappa_1,\kappa_2)\int_0^t  \left( \frac{\rho(r\kappa_1)}{\sigma_1} + \frac{\rho(r\kappa_2)}{\sigma_2}\right) \int_{0}^r \frac{\mu(s)}{V(s,\kappa_1,\kappa_2)}  ds dr =  \int_0^t   \frac{\rho(r\kappa_2)}{\sigma_2} dr.
\end{equation*}
Hence, we conclude that
\begin{multline}\label{eq: recov_dens}
    \lim_{r \rightarrow 0} \sigma[m(u_0^{sc},v_0)] (t^*,\kappa_1,\sigma_1)\\ =  g^2  \nu(\sigma_1) \overline{v_0(t^*, 0,\kappa_1,0,\kappa_2)}   \int_{\mathbb{R}}  e^{-i\sigma_2 t^*} [\mathcal{F}^{-1} \eta] (\sigma_2, \kappa_2) \frac{\nu(\sigma_2)}{{\sigma_2}} d\sigma_2 
     \int_0^{t^*} \rho(r\kappa_2)    dr  .
\end{multline}
By the same reasoning as in the proof of Lemma~\ref{l: det_aux_funct}, we may assume without loss of generality that  
\begin{equation*}
     \int_{\mathbb{R}}  e^{-i\sigma_2 t^*} [\mathcal{F}^{-1} \eta] (\sigma_2, \kappa_2) \frac{\nu(\sigma_2)}{{\sigma_2}} d\sigma_2 \neq 0.
\end{equation*}
Using  property~\eqref{item:cor-nonzeroV} of Lemma \ref{cor:cond}, we know that
\begin{equation*}
v_0(t^*, 0, \kappa_1, 0, \kappa_2) \neq 0.
\end{equation*}
Moreover, $\nu (\sigma_1) \neq 0$ for $\sigma_1 > 1$. Hence, since the left-hand side of~\eqref{eq: recov_dens} is determined by the data, as previously established, and the functions $\nu$ and $\eta$ are known, we can recover the value of the integral
\begin{equation}\label{eq:det_rho}
\int_0^{t^*} \rho(r \kappa_2) dr .
\end{equation}

By Lemma \ref{l:cond}, we may assume that $Y_2$ is a ball. Since $y_2, y_2^* \in Y_2$, it follows that the segment $[y_2; y_2^*]$ is contained in $Y_2$. Hence $[y_2; y_2^*] \cap \Sigma = \varnothing$, and consequently, the integral of $\rho$ over the segment $[y_2; y_2^*]$ vanishes. Therefore, since we have determined \eqref{eq:det_rho}, and by property \eqref{item:cor-y2path} of Lemma \ref{cor:cond}, we can also recover the integral of $\rho$ over the segment $[y_2^*; x_2]$.

We summarize the above. For every $y^* \in Y = \{z_1\} \times Y_2$ and $x_2 \in X_2(y^*)$, the given data determine
\begin{equation*}
\int_0^1 \rho((1 - s)y_2^* + s x_2) \, ds.
\end{equation*}
Since $X_2(y^*) \subset X_2$ is dense and $\rho$ is smooth, the data determine the above integral for all $x_2 \in X_2$.

Finally, since this holds for every $y_2^* \in Y_2$, and $Y_2$ is open, and since $\Sigma \subset [y_2^*; X_2]$ by Lemma~\ref{l:cond}, we conclude from Theorem 3.6 from \cite{Natterer} that the given data determine the density $\rho$.
\end{proof}

\section*{Acknowledgments}

M.~L. was partially supported by the Advanced Grant project 101097198 of the European Research Council, Centre of Excellence of Research Council of Finland (grant 336786) and the FAME flagship of the Research Council of Finland (grant 359186). The second author has been partially funded by the Science Committee of the Ministry of Science and Higher Education of the Republic of Kazakhstan (Grant No. A22683207). L.O. and M.N. were supported by the European Research Council of the European Union, grant 101086697 (LoCal),
and the Research Council of Finland, grants 347715,
353096 (Centre of Excellence of Inverse Modelling and Imaging)
and 359182 (Flagship of Advanced Mathematics for Sensing Imaging and Modelling). The views and opinions expressed are those of the authors only and do not necessarily reflect those of the funding agencies or the EU.

\appendix
\section{}
In this appendix, we reprise the derivation of the system \eqref{eq:dynamics1}. The details are presented in \cite{KS2}.  

We consider a model for the interaction between a quantized massless scalar field and a system of two-level atoms. The atoms are taken to be identical, stationary and sufficiently well separated that interatomic interactions can be neglected. 
The overall system is described by the Hamiltonian
\begin{equation*}
\label{Htot}
H=H_F + H_A + H_I \ , 
\end{equation*}
where $H_F$ is the Hamiltonian of the field, $H_A$ is the Hamiltonian of the atoms and $H_I$ is the interaction Hamiltonian.
In order to treat the atoms and the field on the same footing, it is useful to introduce a real-space representation of $H$~\cite{KS1}.
The Hamiltonian of the field is of the form 
\begin{align}
\label{H_F}
 H_F = \int_{\R^n} dx (-\Delta)^{1/2}\phi^{\dagger}(x)\phi(x) ,
\end{align}
Here the operator $(-\Delta)^{1/2}$ is defined by the Fourier integral
\begin{align}
(-\Delta)^{1/2}f(x) = \int_{\R^n} e^{ik\cdot x} |k|  \mathcal{F}f(k) dk .
\end{align}
The field operators $\phi(x)$ and $\phi^\dagger(x)$, $x\in \R^n$, obey the bosonic commutation relations
\begin{align}
\label{commutation}
[\phi(x),\phi^{\dagger}(x')]&=\delta(x-x') \ , \\
[\phi(x),\phi(x')]&=0 .
\end{align}
Here $\phi^\dagger(x): F_{+}(L^2(\R^n))\to  F_{+}(L^2(\R^n))$ is an operator mapping the Fock space  $F_{+}(L^2(\R^n))$ 
(see formulas \eqref{Fock space total} and  \eqref{Fock space} below) that creates a photon at the point $x$ and $\phi(x):
F_{+}(L^2(\R^n)) \to  F_{+}(L^2(\R^n))$ annihilates a photon at $x$.
We note that (\ref{H_F}) is equivalent to the usual oscillator representation of $H_F$~\cite{KS1}.

The Hamiltonian of the atoms is given by
\begin{align*}
H_A = \Omega\int_{\R^n} dx\rho(x)\sigma^{\dagger}(x)\sigma(x) \ ,
\end{align*}
where $\Omega$ is the atomic resonance frequency, $\rho(x)$ is the number density of the atoms, and $\sigma$ 
is a Fermi field that obeys the anticommutation relations
\begin{align}
\label{anticommutation}
\{\sigma(x),\sigma^{\dagger}(x')\}&=\frac{1}{\rho(x)}\delta(x-x') \ , \\
\{\sigma(x),\sigma(x')\}&=0 \ .
\end{align}

The Hamiltonian describing the interaction between the field and the atoms, within the rotating wave approximation taken to be
 \begin{align*}
H_I = g\int_{\R^n} dx \rho(x)\left(\phi^\dagger(x)\sigma(x)+\sigma^{\dagger}(x)\phi(x)\right) \ ,
\end{align*}
where $g$ is the strength of the atom-field coupling.

The Hamiltonian $H$ acts on the tensor product of a symmetric and antisymmetric Fock space
\begin{align}\label{Fock space total}
   \mathcal{H}= F_{+}(L^2(\R^n))\otimes F_{-}(L^2(\R^n)) .
\end{align}
The Fock space $F_{\pm}(X)$ associated to the Hilbert space $X$ is defined as
\begin{align}\label{Fock space}
    F_{\pm} = \overline{\bigoplus_{n=0}^{\infty} S_{\pm} X^{\otimes n}},
\end{align}
where $S_{\pm}$ is the operator that symmetrizes (+) or antisymmetrizes (-) the $n$-fold tensor product $X^{\otimes n}$. Moreover, the operators $H_F$ and $H_I$ can be written as $H_F\otimes I$ and $I\otimes H_A$, respectively, where $I$ is the identity map.

We suppose that the system is in a two-excitation state of the form
\begin{align}
\label{state}
\vert\Psi\rangle = \int_{\R^n\times\R^n} d x_1 d x_2 & \left(\psi_2(x_1,x_2,t)\phi^{\dagger}(x_1)\phi^{\dagger}(x_2)
+ \psi_1(x_1,x_2,t)\rho(x_1)\sigma^{\dagger}(x_1)\phi^{\dagger}(x_2)\right.\\
+\nonumber&\left. a(x_1,x_2,t)\rho(x_1)\rho(x_2)\sigma^{\dagger}(x_1)\sigma^{\dagger}(x_2)\right)\vert 0\rangle \ ,
\end{align}
where $\vert 0\rangle$ is the combined vacuum state of the field and the ground state of the atoms. Here $\psi_2$ and $a$ are symmetric and antisymmetric, respectively:
\begin{equation*}
\psi_2(x_2,x_1,t) = \psi_2(x_1,x_2,t) \ , \quad a(x_2,x_1,t) = -a(x_1,x_2,t) \ ,
\end{equation*}
consistent with the bosonic  and fermionic character of the corresponding fields. We note that the field operator $\phi(x)$ annihilates the vacuum, so that $\phi(x)\vert 0\rangle=0$

The state $\vert\Psi\rangle$ is the most general two-photon state within the rotating wave approximation and evidently belongs to an invariant subspace of the Fock space. In addition, $\vert\Psi\rangle$ is normalized so that
\begin{align*}
\int_{\R^n\times\R^n} d x_1 d x_2 \left(2\vert\psi_2(x_1,x_2,t)\vert^2 + \rho(x_1)\vert \psi_1(x_1,x_2,t)\vert^2+2\rho(x_1)\rho(x_2)\vert a(x_1,x_2,t)\vert^2\right) = 1
\ ,
\end{align*}
which expresses the conservation of probability.

The dynamics of $\vert\Psi\rangle=\vert\Psi(t)\rangle$ is governed by the Schrodinger equation
\begin{align} 
\label{eq:schr}
i \partial_t\vert\Psi\rangle = H\vert\Psi\rangle \ .
\end{align}
Projecting onto the states $\phi^{\dagger}(\bx)\vert 0\rangle$ and $\sigma^{\dagger}(\bx)\vert 0\rangle$ and making use of (\ref{commutation}) and (\ref{anticommutation}), we arrive at 
the following system of equations obeyed by $a$, $\psi_1$ and
$\psi_2$:
\begin{align}
\label{eq:dynamics1again}
& i\partial_t\psi_2(\bx_1,\bx_2,t) =(-\Delta_{\bx_1})^{1/2}\psi_2(\bx_1,\bx_2,t)+(-\Delta_{\bx_2})^{1/2}\psi_2(\bx_1,\bx_2,t) \\
\nonumber
\label{eq:dynamics2}
&+\frac{g}{2}(\rho(\bx_1)\psi_1(\bx_1,\bx_2,t)+\rho(\bx_2)\psi_1(\bx_2,\bx_1,t)) \ , \\
 & \rho(\bx_1)i\partial_t \psi_1(\bx_1,\bx_2,t) = 2\coup \rho(\bx_1)\psi_2(\bx_1,\bx_2,t)+\rho(\bx_1)\left[(-\Delta_{\bx_2})^{1/2}+\Omega\right]\psi_1(\bx_1,\bx_2,t)\\
 \nonumber
 &-2\coup\rho(\bx_1)\rho(\bx_2)a(\bx_1,\bx_2,t) \ , \\
  &  \rho(\bx_1)\rho(\bx_2)i\partial_t a(\bx_1,\bx_2,t) =\rho(\bx_1)\rho(\bx_2)\frac{g}{2}(\psi_1(\bx_2,\bx_1,t)-\psi_1(\bx_1,\bx_2,t)) + 2\rho(\bx_1)\rho(\bx_2)\Omega a(\bx_1,\bx_2,t) \ ,
\label{eq:dynamics3}
\end{align}
which corresponds to \eqref{eq:dynamics1}.
The derivation of the above  system proceeds as follows.
We begin by evaluating both sides of the Schrodinger equation  (\ref{eq:schr}). The left-hand side is given by
\begin{align}
 \nonumber   i \partial_t\vert\Psi\rangle = \int_{\R^n\times\R^n} dx_1 d x_2 & \left(i\partial_t\psi_2(\bx_1,\bx_2,t)\phi^{\dagger}(\bx_1)\phi^{\dagger}(\bx_2)
+ i\partial_t\psi_1(\bx_1,\bx_2,t)\rho(\bx_1)\sigma^{\dagger}(\bx_1)\phi^{\dagger}(\bx_2)\right.\\
+&\left. i \partial_ta(\bx_1,\bx_2,t)\rho(\bx_1)\rho(\bx_2)\sigma^{\dagger}(\bx_1)\sigma^{\dagger}(\bx_2)\right)\vert 0\rangle \ ,
\end{align}
and the right-hand side is given by
\begin{align}
H\vert\Psi\rangle & =\int_{\R^n\times\R^n\times\R^n} dx d x_1 d x_2\left\{(-\Delta)^{1/2}\phi^{\dagger}(\bx)\phi(\bx) +\Omega\rho(\bx)\sigma^{\dagger}(\bx)\sigma(\bx) + \coup\rho(\bx)\left(\phi^{\dagger}(\bx)\sigma(\bx)+\phi(\bx)\sigma^{\dagger}(\bx) \right)\right\} \\
   \nonumber &\times\{\psi_2(\bx_1,\bx_2,t)\phi^{\dagger}(\bx_1)\phi^{\dagger}(\bx_2)
+ \psi_1(\bx_1,\bx_2,t)\rho(\bx_1)\sigma^{\dagger}(\bx_1)\phi^{\dagger}(\bx_2) \\
\nonumber&+ a(\bx_1,\bx_2,t)\rho(\bx_1)\rho(\bx_2)\sigma^{\dagger}(\bx_1)\sigma^{\dagger}(\bx_2)\}\vert 0\rangle\\
\nonumber&=\int _{\R^n\times\R^n\times\R^n}dx d x_1 d x_2 \{ \psi_2(\bx_1,\bx_2,t)(-\Delta)^{1/2}\phi^{\dagger}(\bx)\phi(\bx)\phi^{\dagger}(\bx_1)\phi^{\dagger}(\bx_2)\\
\nonumber&+\psi_1(\bx_1,\bx_2,t)\rho(\bx_1)(-\Delta)^{1/2}\phi^{\dagger}(\bx)\phi(\bx)\sigma^{\dagger}(\bx_1)\phi^{\dagger}(\bx_2)\\
\nonumber&+\psi_1(\bx_1,\bx_2,t)\rho(\bx_1)\Omega\rho(\bx)\sigma^{\dagger}(\bx)\sigma(\bx)\sigma^{\dagger}(\bx_1)\phi^{\dagger}(\bx_2)\\
\nonumber&+a(\bx_1,\bx_2,t)\rho(\bx_1)\rho(\bx_2)\Omega\rho(\bx)\sigma^{\dagger}(\bx)\sigma(\bx)\sigma^{\dagger}(\bx_1)\sigma^{\dagger}(\bx_2)\\
\nonumber\nonumber&+\psi_1(\bx_1,\bx_2,t)\rho(\bx_1)\coup\rho(\bx)\phi^{\dagger}(\bx)\sigma(\bx)\sigma^{\dagger}(\bx_1)\phi^{\dagger}(\bx_2)\\
\nonumber&+a(\bx_1,\bx_2,t)\rho(\bx_1)\rho(\bx_2)\coup\rho(\bx)\phi^{\dagger}(\bx)\sigma(\bx)\sigma^{\dagger}(\bx_1)\sigma^{\dagger}(\bx_2)\\
\nonumber&+\psi_1(\bx_1,\bx_2,t)\rho(\bx_1)\coup\rho(\bx)\phi(\bx)\sigma^{\dagger}(\bx)\sigma^{\dagger}(\bx_1)\phi^{\dagger}(\bx_2)\\
\nonumber&+\psi_2(\bx_1,\bx_2,t)\coup\rho(\bx)\phi(\bx)\sigma^{\dagger}(\bx)\phi^{\dagger}(\bx_1)\phi^{\dagger}(\bx_2)\}\vert 0\rangle.
\end{align}
Using the commutation relations (\ref{commutation}) twice, together with $\phi(x) \vert 0\rangle = 0$, we rewrite the first term on the right-hand side
\begin{align*}   
&\int _{\R^n\times\R^n\times\R^n}dx d x_1 d x_2 \psi_2(\bx_1,\bx_2,t)(-\Delta)^{1/2}\phi^{\dagger}(\bx)\phi(\bx)\phi^{\dagger}(\bx_1)\phi^{\dagger}(\bx_2)\vert 0\rangle
\\&\quad=
\int _{\R^n\times\R^n\times\R^n}dx d x_1 d x_2 \psi_2(\bx_1,\bx_2,t)(-\Delta)^{1/2}\phi^{\dagger}(\bx)(\phi^{\dagger}(\bx_1)\phi(\bx) + \delta(\bx - \bx_1))\phi^{\dagger}(\bx_2)\vert 0\rangle
\\&\quad=
\int _{\R^n\times\R^n\times\R^n}dx d x_1 d x_2 \psi_2(\bx_1,\bx_2,t)(-\Delta)^{1/2}\phi^{\dagger}(\bx)\phi^{\dagger}(\bx_1)\phi(\bx)\phi^{\dagger}(\bx_2)\vert 0\rangle
\\&\qquad+
\int _{\R^n\times\R^n}d x_1 d x_2 \psi_2(\bx_1,\bx_2,t)(-\Delta)^{1/2}\phi^{\dagger}(\bx_1)\phi^{\dagger}(\bx_2)\vert 0\rangle
\\&\quad=
\int _{\R^n\times\R^n\times\R^n}dx d x_1 d x_2 \psi_2(\bx_1,\bx_2,t)(-\Delta)^{1/2}\phi^{\dagger}(\bx)\phi^{\dagger}(\bx_1)\phi(\bx)\phi^{\dagger}(\bx_2)\vert 0\rangle
\\&\qquad+
\int _{\R^n\times\R^n}d x_1 d x_2 (-\Delta)^{1/2}_{\bx_1}\psi_2(\bx_1,\bx_2,t)\phi^{\dagger}(\bx_1)\phi^{\dagger}(\bx_2)\vert 0\rangle
\\&\quad=
\int _{\R^n\times\R^n}d x_1 d x_2 (-\Delta)^{1/2}_{\bx_2}\psi_2(\bx_1,\bx_2,t)\phi^{\dagger}(\bx_1)\phi^{\dagger}(\bx_2)\vert 0\rangle
\\&\qquad+
\int _{\R^n\times\R^n}d x_1 d x_2 (-\Delta)^{1/2}_{\bx_1}\psi_2(\bx_1,\bx_2,t)\phi^{\dagger}(\bx_1)\phi^{\dagger}(\bx_2)\vert 0\rangle.
\end{align*}
Recalling the anticommutation relations (\ref{anticommutation}) as well,
we can treat the other terms in a similar manner and arrive at
\begin{align*}
\nonumber&\int_{\R^n\times\R^n} d x_1 d x_2 \{ ((-\Delta_{\bx_1})^{1/2}\psi_2(\bx_1,\bx_2,t)+(-\Delta_{\bx_2})^{1/2}\psi_2(\bx_1,\bx_2,t))\phi^{\dagger}(\bx_1)\phi^{\dagger}(\bx_2)\\
\nonumber&+(-\Delta_{\bx_2})^{1/2}\psi_1(\bx_1,\bx_2,t)\rho(\bx_1)\sigma^{\dagger}(\bx_1)\phi^{\dagger}(\bx_2)+\psi_1(\bx_1,\bx_2,t)\rho(\bx_1)\Omega\sigma^{\dagger}(\bx_1)\phi^{\dagger}(\bx_2)\\
\nonumber&+2 a(\bx_1,\bx_2,t)\rho(\bx_1)\rho(\bx_2)\Omega\sigma^{\dagger}(\bx_1)\sigma^{\dagger}(\bx_2)+\psi_1(\bx_1,\bx_2,t)\rho(\bx_1)\coup\phi^{\dagger}(\bx_1)\phi^{\dagger}(\bx_2)\\
\nonumber&-2\coup a(\bx_1,\bx_2,t)\rho(\bx_1)\rho(\bx_2)\phi^{\dagger}(\bx_2)\sigma^{\dagger}(\bx_1)+
\coup\psi_1(\bx_1,\bx_2,t)\rho(\bx_1)\sigma^{\dagger}(\bx_1)\sigma^{\dagger}(\bx_2)\\
&+2\coup\psi_2(\bx_1,\bx_2,t)\rho(\bx_1)\sigma^{\dagger}(\bx_1)\phi^{\dagger}(\bx_2)\}\vert 0\rangle.
\end{align*}
Computing the inner products 
\begin{align*}
   \langle 0\vert\phi(\bx_1)\phi(\bx_2)i\partial_t\vert\Psi\rangle &= 2i\partial_t\psi_2(\bx_1,\bx_2,t) \ ,\\
    \nonumber\langle 0\vert\phi(\bx_1)\phi(\bx_2)H\vert\Psi\rangle &= 2((-\Delta_{\bx_1})^{1/2}+(-\Delta_{\bx_2})^{1/2})\psi_2(\bx_1,\bx_2,t)\\
    &+\coup\rho(\bx_1)\psi_1(\bx_1,\bx_2,t)+\coup\rho(\bx_2)\psi_1(\bx_2,\bx_1,t)\, , 
\end{align*}
yields (\ref{eq:dynamics1again}). In addition, we have
\begin{align*}
     \langle 0\vert\phi(\bx_2)\sigma(\bx_1)\rho(\bx_1)i\partial_t\vert\Psi\rangle &= \rho(\bx_1)i\partial_t\psi_1(\bx_1,\bx_2,t) \ , \\
    \nonumber\langle 0\vert\phi(\bx_2)\sigma(\bx_1)\rho(\bx_1)H\vert\Psi\rangle &= ((-\Delta_{\bx_2})^{1/2}+\Omega)\rho(\bx_1)\psi_1(\bx_1,\bx_2,t)\\
    &+2\coup\rho(\bx_1)\psi_2(\bx_1,\bx_2,t)-2\coup\rho(\bx_1)\rho(\bx_2)a(\bx_1,\bx_2,t)\, , 
\end{align*}
which gives (\ref{eq:dynamics2}). Finally,
\begin{align*}
    \langle 0\vert\sigma(\bx_1)\sigma(\bx_2)\rho(\bx_1)\rho(\bx_2)i\partial_t\vert\Psi\rangle &= 2\rho(\bx_1)\rho(\bx_2)i\partial_t a(\bx_1,\bx_2,t) \ , \\
    \nonumber\langle 0\vert\sigma(\bx_1)\sigma(\bx_2)\rho(\bx_1)\rho(\bx_2)H\vert\Psi\rangle &= \coup\rho(\bx_1)\rho(\bx_2)\psi_1(\bx_1,\bx_2,t)-\coup\rho(\bx_1)\rho(\bx_2)\psi_1(\bx_2,\bx_1,t)\\
    &+4\Omega\rho(\bx_1)\rho(\bx_2)a(\bx_1,\bx_2,t)\  ,
\end{align*}
gives (\ref{eq:dynamics3}).

\section{}\label{appendix: source problem}
In this appendix, we show how the methods developed for the initial value problem can be adapted to a related inverse problem where a source term appears on the right-hand side of the main equation. Specifically, we consider the system
\begin{equation}\label{second_IP_equation}
    \begin{cases}
        \mathcal{A}u = F, \\
        u\arrowvert_{t<0}  = 0,
    \end{cases}
\end{equation}
where the source $F$ belongs to the set
\begin{equation*}
	\mathcal{C}_{\operatorname{sym}}^t(S)= \left\{ F \in C_0^\infty((0,\infty)\times S; \mathbb{C}^4) : F_0(t,x_1, x_2) = F_0(t,x_2, x_1), \;  F_1 = F_2 = F_3 = 0 \right\}.
\end{equation*}

By Proposition \ref{cor: stability_for_aux_eq}, the following measurement map is well-defined:
\begin{align*}
	\Gamma :   \mathcal{C}_{\operatorname{sym}}^t(S) &\to C^\infty ((0,\infty)\times W_1),\\
	\Gamma F(t,x_1) & = \int_{W_2} \chi(x_2) \left|u_0^F(t, x_1, x_2)\right|^2 \, dx_2,
\end{align*}
where $u_0^F$ denotes the first component of the solution $u^F$ to the system above.

The inverse problem is to determine the unknown density $\rho$ from knowledge of the map $\Gamma$.

To solve this problem, we require a geometric assumption analogous to Condition~\ref{condition_intro}. However, in contrast to the initial value problem, the synchronization condition can be relaxed. While we still assume the existence of a source point that connects the measurement points $(x_1, x_2) \in W_1 \times W_2$ via rays, we no longer require that the distances from this point to $x_1$ and $x_2$ are equal to a fixed value $t$. This relaxation is possible because the source $F$ now depends on time, allowing us to vary the emission time so that the two photons can still arrive at $x_1$ and $x_2$ simultaneously. Namely, we require the following assumptions:
\begin{condition}\label{condition_relaxed}
	There exists a set $X_1 \subset W_1$ such that for every $x_1 \in X_1$ and $x_2 \in W_2$, there exists a point $y = (y_1, y_2) \in S$
satisfying
\begin{equation*}
    |x_1 - y_1| = |x_2 - y_2|>0.
\end{equation*}
Additionally, there exists a point $z = (z_1, z_2) \in S$ such that:
\begin{enumerate}
    \item $\Sigma \subset (z_2; W_2]$.
    \item For every $x_2 \in W_2$, there exists a point $x_1 \in X_1$ such that:
    \begin{itemize}
        \item[a)] The segment $[z_1; x_1]$ does not intersect $\Sigma$, that is, $[z_1; x_1] \cap \Sigma = \varnothing$.
        \item[b)] The distances satisfy $|x_1 - z_1| = |x_2 - z_2|$.
    \end{itemize}
\end{enumerate}
\end{condition}

The proof of the following lemma is the same as the proof of Lemma \ref{polarization}.
\begin{lemma}\label{polarization_second_IP}
    Let $F$, $H\in\mathcal{C}_{\operatorname{sym}}^t(S)$ and $u^F$, $u^H$ be the solutions of \eqref{second_IP_equation}. Then, the map $\Gamma$ determines $m(u_0^F,u_0^H)$ on $(0,\infty)\times W_1$, where $m(\cdot,\cdot)$ is defined in \eqref{def_form}.
\end{lemma}

Next, we prove an analogue of Corollary \ref{cor:recover_v}:
\begin{lemma}\label{rec_v_second_IP}
    Assume that Condition~\ref{condition_relaxed} holds. Let $H \in \mathcal{C}_{\operatorname{sym}}^t(S)$, and let $v = v^H$ be the solution to \eqref{second_IP_equation}. Then, for distinct points $x_1^* \in X_1$, $x_2^* \in W_2$, the measurement map $\Gamma$ determines $v_0(\cdot, x_1^*, x_2^*)$ on $(0,\infty)$.
\end{lemma}

\begin{proof}
    By Condition \ref{condition_relaxed}, there exists a point $y = (y_1,y_2) \in S$ such that
    \begin{equation*}
    |x_1^* - y_1| = |x_2^* - y_2| = r>0.
\end{equation*}
Without loss of generality, we assume that $y$ is the origin in $\mathbb{R}^{2n}$; otherwise, we construct $K_\varepsilon$ originating from $y$ and repeat the preceding arguments. By assumption, the origin belongs to $S$, and we have $|x_1^*| = |x_2^*| = r$.

Let $t^*>0$. If $t^* =r$, we consider the source $F$ constructed in the proof of Lemma~\ref{l: det_aux_funct} and let $u=u^F$ be the solution to \eqref{second_IP_equation}. Repeating the steps of Lemma~\ref{l: det_aux_funct}, we obtain
\begin{equation*}
    \lim_{\epsilon\rightarrow 0}\sigma[m(u_0,v_0)](t^*,\omega_1^*,\sigma_1) = \overline{v_0(t^*, 0,\omega_1^*,0,\omega_2^*)} \nu(\sigma_1) \int_{\mathbb{R}} e^{-i\sigma_2 t^*} [\mathcal{F}^{-1} \eta] (\sigma_2, \omega_2^*) \nu(\sigma_2)  d\sigma_2.
\end{equation*}
As in Lemma \ref{l: det_aux_funct}, this allows us to determine the value of $v_0(t^*, 0,\omega_1^*,0,\omega_2^*)$.

Now, suppose $t^* > r$. Since the set $K_\varepsilon$ can be translated forward in time by $t^* - r$, Lemma~\ref{l: det_aux_funct} together with Remark~\ref{remak_shift} implies that the data determine $v_0(t^*, x_1^*, x_2^*)$ in this case as well.

    Finally, consider the case $t^* < r$. Define a shifted source $G$ by
    \begin{equation*}
        G(t, \cdot, \cdot) = H(t - r + t^*, \cdot, \cdot).
    \end{equation*}
    In particular,
    \begin{equation*}
        G(0, \cdot, \cdot) = H(-r + t^*, \cdot, \cdot).
    \end{equation*}
    Since $t^* < r$, it follows that $G \in \mathcal{C}_{\operatorname{sym}}^t(S)$. Letting $w = w^G$ denote the corresponding solution to \eqref{second_IP_equation}, we observe that
    \begin{equation*}
        w(t, \cdot, \cdot) = v(t - r + t^*, \cdot, \cdot).
    \end{equation*}
    Using the result from the case $t^* = r$ and the assumption $|x_1^*| = |x_2^*| = r$, we recover the value $w_0(r, x_1^*, x_2^*)$. By the relation between $w$ and $v$, this yields $v_0(t^*, x_1^*, x_2^*)$.
\end{proof}

The following two results can be established by arguments identical to those used in the proofs of Lemmas~\ref{l:cond} and \ref{cor:cond}.

\begin{lemma}\label{l:cond_second_IP}
Assume that Condition~\ref{condition_relaxed} holds, and let $z$ and $X_1$ be the point and the set given there. Then there exist a neighbourhood $Y_2$ of $z_2$ and an open subset $X_2 \subset W_2$ such that:
\begin{enumerate}
    \item \label{item:YinS_second_IP} $Y = \{z_1\} \times Y_2 \subset S$.
    \item \label{item:SigmaCovered_second_IP} For every $p \in Y_2$, we have $\Sigma \subset (p; X_2]$.
    \item \label{item:existenceOfx1_second_IP} For every $y \in Y$ and $x_2 \in X_2$, there exists a point $x_1 \in X_1$ such that:
    \begin{itemize}
        \item[a)] \label{item:noIntersection_second_IP} The segment $[y_1; x_1]$ does not intersect $\Sigma$, that is, $[y_1; x_1] \cap \Sigma = \varnothing$.
        \item[b)] \label{item:distanceEquality_second_IP} The distances satisfy $|x_1 - y_1| = |x_2 - y_2|$.
    \end{itemize}
\end{enumerate}
Moreover, we can choose sets $Y_2$ and $X_2$ so that $Y_2$ is a ball in $\mathbb{R}^n$ and $Y_2\cap X_2=\varnothing$.
\end{lemma}

Next lemma is an analogue of Lemma~\ref{cor:cond}.

\begin{lemma}\label{cor:cond_second_IP}
Assume that Condition~\ref{condition_relaxed} holds. In particular, the conditions of Lemma~\ref{l:cond_second_IP} hold, and let $z, Y_2, Y$, $X_1$, and $X_2$ be the point and sets defined therein. Then, for every $y^* \in Y$, there exists a dense subset $X_2(y^*) \subset X_2$ such that for every $x_2 \in X_2(y^*)$, there exist $y \in S$, $x_1 \in X_1$, and a source $H \in \mathcal{C}_{\operatorname{sym}}^t(S)$ such that:
\begin{enumerate}
    \item \label{cor:item:y2inY_2_second_IP} $y_2\in Y_2$.
    \item \label{item:cor-y2path_second_IP} Either $y_2 \in [y_2^*; x_2]$ or $y_2^* \in [y_2; x_2]$.
    \item \label{item:cor-noSigma_second_IP} $[y_1; x_1] \cap \Sigma = \varnothing$.
    \item \label{item:cor-equalDist_second_IP} $|x_1 - y_1| = |x_2 - y_2|$.
    \item \label{item:cor-nonzeroV_second_IP} $v_0(t, x_1, x_2) \neq 0$, where $t = |x_1 - y_1|$ and $v = v^H$ is the solution to \eqref{second_IP_equation}.
\end{enumerate}
\end{lemma}

Finally, we will show that $\Gamma$ determines the density.
\begin{theorem}
\label{main result_second_IP}
	Assume that Condition \ref{condition_relaxed} holds. Then the measurement map $\Gamma$ determines the density $\rho$ uniquely.
\end{theorem}

\begin{proof}
    The core of the argument is already present in the proof of Theorem~\ref{main result}, so we provide only a sketch here. Since Condition \ref{condition_relaxed} holds, we may apply Lemma \ref{cor:cond_second_IP}. Let $Y$ be the corresponding set, and fix any $y^* \in Y$. Let $X_2(y^*) \subset X_2$ be the dense subset provided by the corollary, and choose any $x_2 \in X_2(y^*)$ such that $[y_2^*,x_2]$ intersects $\Sigma$. Then, there exist $y \in S$, $x_1 \in X_1$, and a source $H \in \mathcal{C}_{\operatorname{sym}}^t(S)$ satisfying properties \eqref{cor:item:y2inY_2_second_IP}-\eqref{item:cor-nonzeroV_second_IP} of Lemma \ref{cor:cond_second_IP}. Let $v = v^H$ be the corresponding solution to equation \eqref{second_IP_equation}.

    Without loss of generality, we may assume that $y$ is the origin in $\mathbb{R}^{2n}$. By property~\eqref{item:cor-equalDist_second_IP}, we then have
    \begin{equation*}
        x_1 = t^* \kappa_1 \quad \text{and} \quad x_2 = t^* \kappa_2,
    \end{equation*}
    where $\kappa_1 = x_1 / |x_1|$, $\kappa_2 = x_2 / |x_2|$, and $t^* = |x_1| = |x_2|$. By the final statement of Lemma~\ref{l:cond_second_IP}, we have $t^* > 0$, and due to the choice of $x_2$, it follows that $\kappa_1 \neq \kappa_2$.

    Let $F$ be the source constructed in the proof of Theorem~\ref{main result}, and let $u = u^F$ be the corresponding solution to equation~\eqref{second_IP_equation}.

    Since $u_0^{in}$, defined in~\eqref{u_in}, does not depend on the density $\rho$, it can be computed. By Lemma~\ref{rec_v_second_IP}, the map $\Gamma$ determines $m(u_0^{in}, v_0)$ on $(0,\infty)\times W_1$. Lemma~\ref{polarization_second_IP} then implies that we can extract $m(u_0, v_0)$ from the data, and hence we recover $m(u_0^{sc}, v_0)$ on $(0,\infty)\times W_1$, where $u_0^{sc}$ is defined by~\eqref{u_sc}.

    In the proof of Theorem~\ref{main result}, it was shown that
    \begin{multline*}
        \lim_{\epsilon \rightarrow 0} \sigma[m(u_0^{sc},v_0)] (t^*,\kappa_1,\sigma_1) \\
        = \nu(\sigma_1) \overline{v_0(t^*, 0,\kappa_1,0,\kappa_2)}   
        \int_{\mathbb{R}}  e^{-i\sigma_2 t^*} [\mathcal{F}^{-1} \eta] (\sigma_2, \kappa_2) 
        \frac{\nu(\sigma_2)}{\sigma_2} \, d\sigma_2 
        \int_0^{t^*} \rho(r\kappa_2) \, dr.
    \end{multline*}
    As shown previously, this allows us to determine the value
    \begin{equation*}
        \int_0^1 \rho((1 - s)y_2^* + s x_2) \, ds
    \end{equation*}
    for every $y^* \in Y = \{z_1\} \times Y_2$ and $x_2 \in X_2(y^*)$. This suffices to recover $\rho$.
\end{proof}

\bibliographystyle{plain}
\bibliography{references}

@book {Hormander4,
    AUTHOR = {H\"ormander, Lars},
     TITLE = {The analysis of linear partial differential operators. {IV}},
    SERIES = {Grundlehren der mathematischen Wissenschaften [Fundamental
              Principles of Mathematical Sciences]},
    VOLUME = {275},
      NOTE = {Fourier integral operators,
              Corrected reprint of the 1985 original},
 PUBLISHER = {Springer-Verlag, Berlin},
      YEAR = {1994},
     PAGES = {viii+352},
      ISBN = {3-540-13829-3},
   MRCLASS = {35-02 (35S30 47G30 58G15)},
  MRNUMBER = {1481433},
}

@book {ReedSimon1,
    AUTHOR = {Reed, Michael and Simon, Barry},
     TITLE = {Methods of modern mathematical physics. {I}},
   EDITION = {Second},
      NOTE = {Functional analysis},
 PUBLISHER = {Academic Press, Inc. [Harcourt Brace Jovanovich, Publishers],
              New York},
      YEAR = {1980},
     PAGES = {xv+400},
      ISBN = {0-12-585050-6},
   MRCLASS = {46-01 (47-01)},
  MRNUMBER = {751959},
}

@book{HormanderI,
  author    = {Lars H{\"o}rmander},
  title     = {The Analysis of Linear Partial Differential Operators I: Distribution Theory and Fourier Analysis},
  series    = {Classics in Mathematics},
  publisher = {Springer},
  address   = {Berlin, Heidelberg},
  year      = {2003},
  edition   = {2},
  pages     = {XI+440},
  doi       = {10.1007/978-3-642-61497-2},
  isbn      = {978-3-540-00662-6},
  note      = {Reprint of the 2nd edition}
}

@article{MTGP,
  author  = {Paul-Antoine Moreau and Ermes Toninelli and Thomas Gregory and Miles J. Padgett},
  title   = {Imaging with quantum states of light},
  journal = {Nature Reviews Physics},
  year    = {2019},
  volume  = {1},
  number  = {6},
  pages   = {367--380},
  doi     = {10.1038/s42254-019-0056-0},
  url     = {https://doi.org/10.1038/s42254-019-0056-0},
  issn    = {2522-5820}
}

@article{Defienne,
  author  = {Hugo Defienne and Warwick P. Bowen and Maria Chekhova and Gabriela Barreto Lemos and Dan Oron and Sven Ramelow and Nicolas Treps and Daniele Faccio},
  title   = {Advances in quantum imaging},
  journal = {Nature Photonics},
  year    = {2024},
  volume  = {18},
  number  = {10},
  pages   = {1024--1036},
  doi     = {10.1038/s41566-024-01516-w},
  url     = {https://doi.org/10.1038/s41566-024-01516-w},
  issn    = {1749-4893}
}

@book{MW,
place={Cambridge}, title={Optical Coherence and Quantum Optics}, publisher={Cambridge University Press}, author={Mandel, Leonard and Wolf, Emil}, year={1995}}

@article{HKSW1,
author = {Hiltunen, Erik Orvehed and Kraisler, Joseph and Schotland, John C. and Weinstein, Michael I.},
title = {Nonlocal Partial Differential Equations and Quantum Optics: Bound States and Resonances},
journal = {SIAM Journal on Mathematical Analysis},
volume = {56},
number = {3},
pages = {3802-3831},
year = {2024},
doi = {10.1137/23M158142X},
URL = { https://doi.org/10.1137/23M158142X}
}

@article{HKSW2,
author = {Hiltunen, Erik Orvehed and Kraisler, Joseph and Schotland, John C. and Weinstein, Michael I.},
title = {Nonlocal PDEs and quantum optics: band structure of periodic atomic systems},
journal = {arXiv:2311.12632},
year={2023}
}

@article{HKRS,
title = {A fast, high-order numerical method for the simulation of single-excitation states in quantum optics},
journal = {Journal of Computational Physics},
volume = {473},
pages = {111723},
year = {2023},
issn = {0021-9991},
doi = {https://doi.org/10.1016/j.jcp.2022.111723},
url = {https://www.sciencedirect.com/science/article/pii/S0021999122007860},
author = {Jeremy G. Hoskins and Jason Kaye and Manas Rachh and John C. Schotland}
}

@article {KS1,
    AUTHOR = {Kraisler, Joseph and Schotland, John C.},
     TITLE = {Collective spontaneous emission and kinetic equations for
              one-photon light in random media},
   JOURNAL = {J. Math. Phys.},
  FJOURNAL = {Journal of Mathematical Physics},
    VOLUME = {63},
      YEAR = {2022},
    NUMBER = {3},
     PAGES = {Paper No. 031901, 22},
      ISSN = {0022-2488,1089-7658},
   MRCLASS = {81V80},
  MRNUMBER = {4388726},
       DOI = {10.1063/5.0055171},
       URL = {https://doi.org/10.1063/5.0055171},
}

@article {KS2,
    AUTHOR = {Kraisler, Joseph and Schotland, John C.},
     TITLE = {Kinetic equations for two-photon light in random media},
   JOURNAL = {J. Math. Phys.},
  FJOURNAL = {Journal of Mathematical Physics},
    VOLUME = {64},
      YEAR = {2023},
    NUMBER = {11},
     PAGES = {Paper No. 111903, 30},
      ISSN = {0022-2488,1089-7658},
   MRCLASS = {81V80},
  MRNUMBER = {4662258},
MRREVIEWER = {Moorad\ Alexanian},
       DOI = {10.1063/5.0106535},
       URL = {https://doi.org/10.1063/5.0106535},
}

@article{KSS,
author={Joseph Kraisler and Jeffrey Schenker and John C. Schotland},
title={Dynamic One Photon Localization in a Discrete Model of Quantum Optics},
journal = {arXiv:2407.14109},
year={2024}
}

@book {Natterer,
    AUTHOR = {Natterer, F.},
     TITLE = {The mathematics of computerized tomography},
    SERIES = {Classics in Applied Mathematics},
    VOLUME = {32},
      NOTE = {Reprint of the 1986 original},
 PUBLISHER = {Society for Industrial and Applied Mathematics (SIAM),
              Philadelphia, PA},
      YEAR = {2001},
     PAGES = {xviii+222},
      ISBN = {0-89871-493-1},
   MRCLASS = {00A69 (44A12 65R10 68U99 92C55)},
  MRNUMBER = {1847845},
MRREVIEWER = {Fritz\ Keinert},
       DOI = {10.1137/1.9780898719284},
       URL = {https://doi.org/10.1137/1.9780898719284},
}

@book {Xavier,
    AUTHOR = {Saint Raymond, Xavier},
     TITLE = {Elementary introduction to the theory of pseudodifferential
              operators},
    SERIES = {Studies in Advanced Mathematics},
 PUBLISHER = {CRC Press, Boca Raton, FL},
      YEAR = {1991},
     PAGES = {viii+108},
      ISBN = {0-8493-7158-9},
   MRCLASS = {47G30 (35S05)},
  MRNUMBER = {1211419},
MRREVIEWER = {Nicolas\ Lerner},
}

@book {Kato1995,
    AUTHOR = {Kato, Tosio},
     TITLE = {Perturbation theory for linear operators},
    SERIES = {Classics in Mathematics},
      NOTE = {Reprint of the 1980 edition},
 PUBLISHER = {Springer-Verlag, Berlin},
      YEAR = {1995},
     PAGES = {xxii+619},
      ISBN = {3-540-58661-X},
   MRCLASS = {47A55 (46-00 47-00)},
  MRNUMBER = {1335452},
}

@book {EngelNagel,
    AUTHOR = {Engel, Klaus-Jochen and Nagel, Rainer},
     TITLE = {One-parameter semigroups for linear evolution equations},
    SERIES = {Graduate Texts in Mathematics},
    VOLUME = {194},
      NOTE = {With contributions by S. Brendle, M. Campiti, T. Hahn, G.
              Metafune, G. Nickel, D. Pallara, C. Perazzoli, A. Rhandi, S.
              Romanelli and R. Schnaubelt},
 PUBLISHER = {Springer-Verlag, New York},
      YEAR = {2000},
     PAGES = {xxii+586},
      ISBN = {0-387-98463-1},
   MRCLASS = {47D06 (34G10 35K90 47N20)},
  MRNUMBER = {1721989},
MRREVIEWER = {Charles\ Batty},
}

@book {LionsMagenes,
    AUTHOR = {Lions, J.-L. and Magenes, E.},
     TITLE = {Non-homogeneous boundary value problems and applications.
              {V}ol. {I}},
    SERIES = {Die Grundlehren der mathematischen Wissenschaften},
    VOLUME = {Band 181},
      NOTE = {Translated from the French by P. Kenneth},
 PUBLISHER = {Springer-Verlag, New York-Heidelberg},
      YEAR = {1972},
     PAGES = {xvi+357},
   MRCLASS = {35JXX (35KXX 35LXX 46E35)},
  MRNUMBER = {350177},
}

@book {hormander1985,
    AUTHOR = {H\"ormander, Lars},
     TITLE = {The analysis of linear partial differential operators. {III}},
    SERIES = {Grundlehren der mathematischen Wissenschaften [Fundamental
              Principles of Mathematical Sciences]},
    VOLUME = {274},
      NOTE = {Pseudodifferential operators},
 PUBLISHER = {Springer-Verlag, Berlin},
      YEAR = {1985},
     PAGES = {viii+525},
      ISBN = {3-540-13828-5},
   MRCLASS = {35-02 (35Sxx 47G05 58G15)},
  MRNUMBER = {781536},
MRREVIEWER = {Min\ You\ Qi},
}

@book {grigis1994,
    AUTHOR = {Grigis, Alain and Sj\"ostrand, Johannes},
     TITLE = {Microlocal analysis for differential operators},
    SERIES = {London Mathematical Society Lecture Note Series},
    VOLUME = {196},
      NOTE = {An introduction},
 PUBLISHER = {Cambridge University Press, Cambridge},
      YEAR = {1994},
     PAGES = {iv+151},
      ISBN = {0-521-44986-3},
   MRCLASS = {35A27 (35Sxx 58G07 58G15 58G17)},
  MRNUMBER = {1269107},
MRREVIEWER = {Vesselin\ M.\ Petkov},
       DOI = {10.1017/CBO9780511721441},
       URL = {https://doi.org/10.1017/CBO9780511721441},
}

\end{document}